\DeclarePairedDelimiter{\floor}{\lfloor}{\rfloor}
\providecommand{\N}{\mathbb{N}}
\providecommand{\F}{\mathbb{F}}
\providecommand{\R}{\mathbb{R}}
\providecommand{\C}{\mathbb{C}}
\providecommand{\T}{\mathbb{T}}
\providecommand{\Z}{\mathbb{Z}}
\renewcommand{\vec}[1]{\boldsymbol{#1}}
\providecommand{\leqst}{\leq_{\mathrm{st}}}
\newcommand{\paren}[1]{\left( #1 \right)}
\newcommand{\brac}[1]{\left[ #1 \right]}
\newcommand{\abs}[1]{\left\vert#1\right\vert}
\newcommand{\set}[1]{\left\{#1\right\}}
\DeclareMathOperator{\rank}{rank}
\newtheorem{Theorem}{Theorem}
\newtheorem{Lemma}[Theorem]{Lemma}
\newtheorem{Definition}[Theorem]{Definition}
\newtheorem{Proposition}[Theorem]{Proposition}
\newtheorem{Corollary}[Theorem]{Corollary}
\newtheorem{Conjecture}[Theorem]{Conjecture}
\newcounter{cnstcnt}
\newcommand{\newconstant}{%
\refstepcounter{cnstcnt}%
\ensuremath{c_{\thecnstcnt}}}
\newcommand{\oldconstant}[1]{\ensuremath{c_{\ref{#1}}}}
\begin{document}
\title[Plaquette Random-Cluster Model \& Potts Lattice Gauge Theory]{Topological Phases in the \\ Plaquette Random-Cluster Model \\ and Potts Lattice Gauge Theory}

\author{Paul Duncan}
\email{paul.duncan@mail.huji.ac.il}
\thanks{P.D.\ gratefully acknowledges the support of NSF-DMS \#1547357.}
\address{Einstein Institute of Mathematics, Hebrew University of Jerusalem, Jerusalem 91904, Israel}
\author{Benjamin Schweinhart}
\email{bschwei@gmu.edu}
\address{Department of Mathematical Sciences, George Mason University, Fairfax, VA 22030, USA}

\maketitle

\begin{abstract}
The $i$-dimensional plaquette random-cluster model on a finite cubical complex is the random complex of $i$-plaquettes with each configuration having probability proportional to
$$p^\text{\# of plaquettes}\paren{1-p}^\text{\# of complementary plaquettes}q^{\mathbf{b}_{i-1}}\,,$$
where $q\geq 1$ is a real parameter and $\mathbf{b}_{i-1}$ denotes the rank of the $(i-1)$-homology group with coefficients in a specified coefficient field. When $q$ is prime and the coefficient field is $\F_q$, this model is coupled with the $(i-1)$-dimensional $q$-state Potts lattice gauge theory. We prove that the probability that an $(i-1)$-cycle in $\Z^d$ is null-homologous in the plaquette random-cluster model equals the expectation of the corresponding generalized Wilson loop variable. This provides the first rigorous 
justification for a claim of Aizenman, Chayes, Chayes, Fr\"olich, and Russo that there is an exact relationship between Wilson loop variables and the event that a loop is bounded by a surface in an interacting system of plaquettes. 
We also prove that the $i$-dimensional plaquette random-cluster model on the $2i$-dimensional torus exhibits a sharp phase transition at the self-dual point $p_{\mathrm{sd}} \coloneqq \frac{\sqrt{q}}{1+\sqrt{q}}$ in the sense of homological percolation. This implies a qualitative change in the generalized Swendsen--Wang dynamics from local to non-local behavior.  
\end{abstract}

\section{Introduction}

The random-cluster model~\cite{fortuin1972random, grimmett2006random}, also known as the Fortuin--Kasteleyn model, is a random subgraph of a finite graph where the probability of a given configuration is proportional to
$$p^\text{\# of edges}\paren{1-p}^\text{\# of complementary edges}q^\text{\# of connected components}.$$
This model can then be extended to infinite graphs via limits on finite subgraphs when $q \geq 1.$ The random cluster model is an important tool for the study of the Potts model of interacting spins on the vertices of a graph. Together, they are the subject of an expansive literature in mathematical physics, probability, and statistical physics. The relationship between the models is exemplified by the connection--correlation theorem, which expresses the correlation between the spins of two vertices in the Potts model in terms of the probability that they are in the same connected component in the corresponding random cluster model.   

One-dimensional $q$-state Potts lattice gauge theory is a higher-dimensional analogue of the Potts model, where spins are assigned to edges of a graph rather than to vertices. It was as introduced in the physics literature~\cite{wegner1971duality, kogut1980z} as a relatively approachable analogue of Euclidean lattice gauge theory.  Lattice gauge theory is in turn a discretized models of Euclidean Yang--Mills theory~\cite{wilson1974confinement,  wilson2004origins, chatterjee2016yang}. The most important observables for these models are the Wilson loop variables, which are the sum of the spins around a loop of edges (or, equivalently, the evaluation of the co-chain on an $1$-cycle)\footnote{We view a Wilson loop variable as a sum rather than as a product to match the notation in algebraic topology. This point will be discussed more below}.

In their celebrated paper on independent plaquette percolation,  Aizenman, Chayes, Chayes, Fr\"olich, and Russo posited the existence of an exact relationship between Wilson loop variables and the event that a loop is bounded by a surface in an interacting system of plaquettes, or random $2$-complex~\cite{ACCFR83}, in some sense generalizing the connection-correlation theorem. Inspired by the random cluster model and its relationship with the Potts model, Maritan and Omero~\cite{maritan1982gauge} defined such a random $2$-complex associated to $1$-dimensional Potts lattice gauge theory. Their goals were to to make rigorous the relationship between lattice gauge theory and the behavior of random surfaces and  to interpret the $q\rightarrow 1$ limit of Potts lattice gauge theory in terms of surfaces of independent plaquettes (plaquette percolation). However, their model was defined vaguely in terms of closed surfaces of plaquettes rather than homology.  This imprecision led to inconsistencies that were found by were found by Aizenman and Fr\"olich ~\cite{aizenman1984topological}: the Betti numbers depend on the coefficients\footnote{Note that $\mathbf{b}_{0}$ counts the number of connected components regardless of the coefficient field.}. 

In essence, Maritan and Omero defined a random $2$-complex $P$  so\looseness=-1
\[\mathbb{P}\paren{P}\propto p^{\text{\# of plaquettes}}\paren{1-p}^{\text{\# of complementary plaquettes}}q^{\mathbf{b}_{1}\paren{P;\,\F_2}}\,,\]
where the first Betti number $\mathbf{b}_{1}\paren{P;\,\F_2}$ counts the number of ``independent loops'' in $P,$ and is related to the number of ``independent closed surfaces'' by the Euler--Poincar\'{e} formula. Upon close inspection, this model is not coupled with Potts lattice gauge theory when $q\neq 2.$ An earlier paper of Ginsparg, Goldschmidt, and Zuber~\cite{ginsparg1980large} performed a series expansion for Potts lattice gauge theory which suggests this construction,  and also an alternative model where only oriented surfaces are taken into account. This has the effect of replacing $\mathbf{b}_{1}\paren{P;\,\F_2}$ with $\mathbf{b}_{1}\paren{P;\,\Z},$ yielding a random $2$-complex that is not coupled with Potts lattice gauge theory for any choice of $q.$ Nor is Potts lattice gauge theory coupled with a different generalized random-cluster model due to Chayes and Chayes~\cite{chayes1984correct} where $\mathbf{b}_{1}\paren{P;\,\F_2}$ is replaced by the the number of strongly connected components of the set of plaquettes. We show here that the ``correct'' approach is to view the plaquette random-cluster of Hiraoka and Shirai~\cite{hiraoka2016tutte} as a family with three parameters, namely $p$, $q$, and the coefficient field $\F.$ Then, by setting $\F=\F_q$ and choosing $p$ appropriately, we can express Wilson loop variables in terms of the probability that the loop is bounded by a surface of plaquettes, where the coefficient field delimits the set of admissible surfaces. Moreover, fixing $\F_q$ and taking the $q\rightarrow 1$ limit allows a comparison of $q$-state Potts lattice gauge theory with plaquette percolation.

It is conjectured that the expectations of Wilson loop variables in Potts lattice gauge theory exhibit a sharp phase transition from a ``perimeter law'' to an ``area law,'' in which they decay exponentially in the perimeter and area of the loop respectively. Our results show that this is equivalent to a corresponding conjecture for the event that the loop is null-homologous in the plaquette random cluster model. While these conjectures remains out of reach, we are able to prove a qualitative analogue in terms of homological percolation on the torus. That is, we show that the $i$-dimensional plaquette random-cluster model on the $2i$-dimensional torus exhibits a sharp phase transition at the self-dual point $p_{\mathrm{sd}} \coloneqq \frac{\sqrt{q}}{1+\sqrt{q}}$ marked by the the emergence of giant cycles which are non-trivial homology classes in the ambient torus. When $q$ is an odd prime, these cycles are ruled by Polyakov loops that are constant in the corresponding Potts lattice gauge theory. The critical point is alternatively characterized by a qualitative change in the generalized Swendsen--Wang dynamics from local to non-local behavior.

\subsection{Definitions, Conjectures, and Results}\label{sec:DCR}

The generalized random cluster model we study here was introduced by  Hiraoka and Shirai~\cite{hiraoka2016tutte}. 
Their model, which we call the $i$-dimensional plaquette random-cluster model, weights the probability of an $i$-complex $P$ in terms of the  $(i-1)$-Betti number of the homology with coefficients in a field $\F,$ denoted  $\mathbf{b}_{i-1}\paren{P;\,\F}.$  We provide a definition of Betti numbers in Section~\ref{sec:hom} below.

\begin{Definition} Let $X$ be a finite $d$-dimensional cell complex, let $0<i<d,$ and fix a field $\F$ and parameters $p\in\brac{0,1}$ and $q\in\paren{0,\infty}.$ The \emph{$i$-dimensional plaquette random-cluster model} on $X$ is the random $i$-complex $P$ containing the full $\paren{i-1}$-skeleton of $X$ with the following distribution:
\begin{align*}
    \mu_{X}\paren{P} = \mu_{X,p,q,i,\mathbb{F}}\paren{P} \coloneqq \frac{1}{Z}p^{\abs{P}}\paren{1-p}^{\abs{X^i} - \abs{P}}q^{\mathbf{b}_{i-1}\paren{P;\,\mathbb{F}}}\,,
\end{align*}
where $Z=Z\paren{X,p,q,i,\F}$ is a normalizing constant, and $\abs{X^i}$ and $\abs{P}$ denote the number of $i$-cells of $X$ and $P,$ respectively.
\end{Definition}
As $\mathbf{b}_0$ is the number of connected components, this is indeed a generalization of the classical random-cluster model. In their paper introducing the plaquette random-cluster model, Hiraoka and Shirai's main focus is proving an expression for the partition function in terms of a generalized Tutte polynomial. They also show positive association and construct a coupling between the model and a generalization of the Potts model. However, they do not mention that this generalized Potts model had been previously defined as Potts lattice gauge theory~\cite{hiraoka2016tutte}.

Generalized $i$-dimensional Potts lattice gauge theory assigns spins in an abelian group $\mathcal{G}$ to the $i$-dimensional faces of an oriented cell complex $X.$ Denote by $C^i\paren{X;\,\mathcal{G}}$ the set of functions from the oriented $i$-cells of $X$ to  $\mathcal{G}$ so that cells with opposite orientations are mapped to inverse group elements. We follow the language of algebraic topology and call elements of $C^i\paren{X;\,\mathcal{G}}$ cochains. In other works on lattice gauge theory they are sometimes termed discrete differential forms, for reasons that we will describe in Section~\ref{sec:hom} below. 
\begin{Definition}
Let $\mathcal{G}$ be a finite abelian group and let $X$ be a finite cubical complex. For $f\in C^{i}\paren{X;\,\mathcal{G}}$ let
\begin{equation}
    \label{eq:hamiltonian_potts}
    H\paren{f}=-\sum_{\sigma}K\paren{\delta f\paren{\sigma},1}\,,
\end{equation}
where $\delta$ is the coboundary operator $\delta f\paren{\sigma}=f\paren{\partial \sigma}$ and $K$ is the Kronecker delta function. \emph{$(i-1)$-dimensional Potts lattice gauge theory} on $X$ with coefficients in $\mathcal{G}$ is the measure
\[\nu_{X,\beta,\mathcal{G},i-1,d}\paren{f}=\frac{1}{\mathcal{Z}} e^{-\beta H\paren{f}}\]
where $\beta$ is a parameter (called the coupling constant or inverse temperature) and $\mathcal{Z}=\mathcal{Z}\paren{X,\beta,\mathcal{G},i-1,d}$ is a normalizing constant.
\end{Definition}
In this paper, we focus exclusively on the cases where $\mathcal{G}=\F_q$ is the additive group of integers modulo a prime number $q$ (which can be identified with the multiplicative group of $q$-th complex roots of unity $\Z\paren{q}$). In keeping with conventions from algebraic topology, we use additive notation rather than the multiplicative notation of the definition of Potts lattice gauge theory. In particular, we will distinguish between the additive group of integers modulo $n$ $\Z_n$ and the multiplicative group of complex $n$-th roots of unity $\Z(n),$ though they are isomorphic. In the case where $\mathcal{G}=\F_q,$ we will denote the Potts lattice gauge theory measure by $\nu_{\beta,q,i-1,d},$ or simply $\nu,$ and call this measure $q$-state Potts lattice gauge theory. We describe why the assumption that $q$ is prime is necessary in Section~\ref{sec:prime}.  We will be most interested in the Potts lattice gauge theory on $\Z^d,$ and we will write $\nu^{\mathbf{f}}=\nu^{\mathbf{f}}_{\Z^d,\beta,\mathcal{G},i-1,d}$ to denote the limiting Potts lattice gauge theory with free boundary conditions constructed in Section~\ref{sec:potts_infinitevolume}.

Potts lattice gauge theory is motivated by its similarities with Euclidean lattice gauge theory.  Lattice gauge theory is in turn a  discretized model of Euclidean Yang--Mills theory~\cite{wilson1974confinement, wilson2004origins, chatterjee2016yang}.  Very briefly, Euclidean lattice gauge theory is defined so that $\F_q$ can be replaced with a complex matrix group $\mathcal{G}.$ In particular, $1$-dimensional lattice gauge theory on $\Z^4$ with $\mathcal{G}=U(1),$ $\mathcal{G}=SU(2),$ and $\mathcal{G}=SU(3)$, models the electromagnetic, weak nuclear, and strong nuclear forces, respectively. While Potts lattice gauge theory is not itself physical, it has been studied in the physics literature as it is more tractable and is thought to present some of the same behavior as more physically relevant cases~\cite{ginsparg1980large, kotecky1982first, maritan1982gauge, aizenman1984topological, laanait1989discontinuity}. The special cases of $2$ and $3$-state Potts lattice gauge theory coincide with $\Z(2)$ (Ising) and $\Z(3)$ ``clock'' lattice gauge theory respectively after an appropriate rescaling of the coupling constant $\beta$ (where $\Z(n)$ denotes the multiplicative group of $n$-th complex roots of unity). These models have also  been studied in the physical and the mathematical literatures as relatively approachable examples of lattice gauge theory~\cite{altes1978duality, yoneya1978z, frohlich1979confinement, mack1979comparison, marra1979statistical,  kogut1980z,  wipf2007generalized, wegner2017duality,  chatterjee2020wilson}.

The most important observables in lattice gauge theory are arguably Wilson loop variables, which we define for $q$-state Potts lattice gauge theory. These arise from evaluating a Potts state $f$ on a ``loop'' $\gamma$ made of $(i-1)$-cells. In topological language $\gamma \in C_{i-1}\paren{X;\;\mathcal{G}}$ is called an $(i-1)$-cycle.

\begin{Definition}
Let $f$ be an $(i-1)$-cocycle. The \emph{generalized Wilson loop variable} associated to $\gamma$ is the random variable $W_{\gamma}:C^{i-1}\paren{X;\,\F_q}\rightarrow \C$ given by 
\[W_{\gamma}\paren{f}=\paren{f\paren{\gamma}}^{\mathbb{C}}\,,\]
where the $\C$ superscript denotes that we are viewing the variable as a complex number. That is, if $g\in \F_q,$ $g^\C$ is the corresponding $q$-th root of unity in $\C.$ 
\end{Definition}
An important conjecture for the analogous quantities in Euclidean lattice gauge theory --- called the Wilson area law --- is that if $\gamma=\partial\rho$ is a $1$-boundary and $\rho$ is the minimal bounding chain then the expectation of $W_{\partial\rho}\paren{\omega}$ should decay as $e^{-c\abs{\rho}}$ in some cases. For $\mathcal{G}=SU(2)$ or $SU(3),$ this conjecture is thought to be related to the phenomenon of quark confinement: that charged particles for the weak or strong nuclear forces are not seen in isolation, unlike for for the electromagnetic force~\cite{wilson1974confinement,chatterjee2021probabilistic} where the corresponding $U\paren{1}$ lattice gauge theory exhibits both ``area law'' and ``perimeter law'' phases~\cite{guth1980existence,frohlich1982massless}.

One-dimensional $q$-state Potts lattice gauge theory on $\Z^4$ is thought to undergo a sharp phase transition as the coupling constant $\beta$ increases, from a ``perimeter law''  to an``area law'' regime. We state a more general form of this ``sharpness'' conjecture here.
  \begin{Conjecture}
  \label{conj:sharpness}
Fix integers $q \geq 2$ and $0\leq i<d.$ Then there are constants $0<\beta_c\paren{q}<\infty$, and $0<\newconstant\label{const:conj1}(\beta,q),\newconstant\label{const:conj2}(\beta,q)<\infty$ so that, for hyperrectangular $(i-1)$-boundaries $\gamma$ in $\Z^d,$
\begin{align*}
    -\frac{\log\paren{\mathbb{E}_{\nu^\mathbf{f}}(W_\gamma)}}{\mathrm{Area}(\gamma) } \rightarrow &   \oldconstant{const:conj1}(\beta,q) \qquad && \beta < \beta_{c}(q)\\
   -\frac{\log\paren{\mathbb{E}_{\nu^\mathbf{f}}(W_\gamma)}}{ \mathrm{Per}(\gamma)} \rightarrow &   \oldconstant{const:conj2}(\beta,q) \qquad && \beta > \beta_{c}(q)
      \,,
\end{align*}
as all dimensions of $\gamma$ are taken to $\infty.$ In addition, if $d=2i,$ then $\beta_c\paren{q}=\beta_{\mathrm{sd}}(q)=\log\paren{1+\sqrt{q}}.$
\end{Conjecture}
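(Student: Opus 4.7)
The plan is to reduce Conjecture~\ref{conj:sharpness} to a sharpness statement for the null-homology event in the plaquette random-cluster model (PRCM), using the paper's exact coupling between Wilson loops and bounding surfaces. The paper's main equivalence identifies the Wilson loop expectation with the probability that $\gamma$ is null-homologous in the $i$-dimensional PRCM on $\Z^d$ with coefficients in $\F_q$ at the coupled parameter $p=1-e^{-\beta}$:
\begin{equation*}
    \mathbb{E}_{\nu^{\mathbf{f}}}\paren{W_\gamma}\;=\;\mu^{\mathbf{f}}_{\Z^d,p,q,i,\F_q}\paren{\gamma\sim 0}\,.
\end{equation*}
Thus it suffices to show that this null-homology probability decays exponentially in $\mathrm{Area}\paren{\gamma}$ for $p$ small and exponentially in $\mathrm{Per}\paren{\gamma}$ for $p$ large, with a common sharp threshold $p_c\paren{q}$ matching $p_{\mathrm{sd}}$ when $d=2i$.

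For the area-law side (small $p$, high temperature), I would carry out a Peierls / cluster-expansion argument. If $\gamma$ is null-homologous in a realization $P$, there exists an $i$-chain $\rho\subseteq P$ with $\partial\rho=\gamma$. A union bound over such bounding chains --- weighted by the cost $\paren{\tfrac{p}{1-p}}^{\abs{\rho}}$ of forcing $\rho$ into the configuration --- combined with a Kesten-style enumeration of cellular $i$-surfaces of prescribed area, would yield $\mu^{\mathbf{f}}\paren{\gamma\sim 0}\leq e^{-c_1\mathrm{Area}\paren{\gamma}}$ for $p$ sufficiently small; the matching lower bound is immediate from inserting a minimal bounding $i$-surface of $\gamma$. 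This is the PRCM analogue of the classical high-temperature area-law proof for $\F_q$ lattice gauge theory.

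For the perimeter-law side (large $p$, low temperature), the strategy is to invoke the cellular Poincar\'e--Lefschetz duality between the $i$-dimensional PRCM at $p$ and the $\paren{d-i}$-dimensional PRCM at the dual parameter $p^{*}$ satisfying $\tfrac{pp^{*}}{\paren{1-p}\paren{1-p^{*}}}=q$. In the dense phase the configuration is almost full of plaquettes, and the obstruction to $\gamma$ bounding lies in a thin dual ``defect'' along a tubular neighborhood of a minimal Seifert surface of $\gamma$. A Pisztora-type renormalization at the scale of the dual correlation length, together with a surgery / patching argument, should show that such defects have probability decaying exponentially in $\mathrm{Per}\paren{\gamma}$; the matching lower bound follows again by explicit construction, paying only a perimeter-sized cost to remove plaquettes along a tube around $\gamma$.

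The principal obstacle --- and the reason this remains a conjecture --- is proving the existence and sharpness of $\beta_c\paren{q}$ and its identification with $\beta_{\mathrm{sd}}$ in the self-dual case. Existing sharpness technology (OSSS, Duminil-Copin--Raoufi--Tassion differential inequalities) addresses monotone connectivity events, whereas $\set{\gamma\sim 0}$ is genuinely homological and is not monotone under addition of plaquettes. A plausible route is to combine the sharpness result the paper establishes for homological percolation on $\T^{2i}$ with a finite-size criterion in the Pisztora tradition to transfer it to $\Z^d$, and then to use the exact self-duality of the PRCM at $p_{\mathrm{sd}}$ together with a Beffara--Duminil-Copin-style coexistence argument to exclude both area and perimeter behavior at the self-dual point simultaneously, forcing $\beta_c=\beta_{\mathrm{sd}}$. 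Extending beyond $d=2i$ looks substantially harder, since neither self-duality nor a proven sharpness result is available there.
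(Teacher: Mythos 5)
This statement is a \emph{conjecture} in the paper, not a theorem; the authors explicitly write that ``This conjecture is not known for any specific value of $q$ when $i>1$,'' and there is therefore no proof in the paper to compare against. Your proposal is correctly framed as a strategy sketch rather than a proof, and your concluding paragraph rightly identifies the genuine bottleneck: there is no known sharpness technology for the homological event $V_\gamma$ analogous to the Duminil-Copin--Raoufi--Tassion machinery for connectivity.

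That said, a few points are worth flagging. First, your opening reduction --- rewriting $\mathbb{E}_{\nu^{\mathbf{f}}}(W_\gamma)=\mu_{\Z^d}(V_\gamma)$ via Theorem~\ref{thm:comparison} --- is exactly the paper's own framing: the paper records the PRCM version of the conjecture as Conjecture~\ref{conj:sharpnessRCM} and notes the equivalence when $q$ is prime and $\F=\F_q$. Second, the partial results you sketch (area law at small $p$, perimeter law at large $p$) correspond to the paper's actual Theorem~\ref{thm:areaperimeter}, but the paper proves them differently than you suggest: not via a direct Peierls/cluster expansion in the PRCM, but by stochastic domination of $\mu_{\Z^d,p,q,i}$ between two Bernoulli plaquette percolations (Lemma~\ref{lemma:stochasticdom} and Corollary~\ref{cor:zddom}) followed by citing the Aizenman--Chayes--Chayes--Fr\"ohlich--Russo estimates (Theorem~\ref{thm:accfrhigher}). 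Your route and the paper's route are both plausible for the non-sharp statement; the paper's comparison is shorter because it outsources the expansions to the independent model. Third, and this is a concrete error: you assert that $\{\gamma\sim 0\}$ ``is not monotone under addition of plaquettes.'' It is in fact increasing --- if $\gamma=\partial\rho$ with $\rho\in C_i(P;\F)$ and $P\subseteq P'$, then the same $\rho$ witnesses $\gamma=\partial\rho$ in $P'$. The true obstruction to importing OSSS/DCRT sharpness is not a failure of monotonicity but the absence of the connectivity/exploration structure those arguments exploit, so that part of your heuristic explanation needs repair even though your overall diagnosis of the difficulty is sound.
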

  
Recall that $W_\gamma$ is defined as a complex number, so the expectation is complex as well. Here, the \emph{perimeter} of an $(i-1)$-boundary $\gamma$ is the number of plaquettes in its support and its \emph{area} is the number of plaquettes supported in the minimal bounding chain. Observe that when $i=1,$ $\gamma$ consists of two vertices $\set{v,w},$ its perimeter is $2$, and its area is the distance between $v$ and $w.$ As such, the $i=1$ case of the conjecture is the the sharpness of the phase transition for the Potts model, as proven by Duminil-Copin, Raoufi, and Tassion~\cite{duminil2019sharp}.

This conjecture is not known for any specific value of $q$ when $i>1$, but Laanait, Messager, Ruiz showed that such a transition occurs when $q$ is sufficiently large~\cite{laanait1989discontinuity}. In addition, classical series expansions have been used to prove the existence of perimeter law and area law regimes for sufficiently extreme values of $\beta.$ Specifically, Osterwalder and Seiler employed a ``high--temperature expansion'' to show existence of an area law regime for any lattice gauge theory~\cite{osterwalder1978gauge, seiler1982gauge}, and the argument should work for Potts lattice gauge theory as well. On the other hand, a perimeter law regime is demonstrated by a ``low--temperature expansion,'' a technique that has recently been used by Chatterjee~\cite{chatterjee2020wilson}, Cao~\cite{cao2020wilson}, and Forsström, Lenells, and Viklund~\cite{forsstrom2020wilson}  to prove precise estimates for the asymptotics of Wilson loop variables for lattice gauge theory when $\beta$ is large. We use the coupling of the plaquette random cluster model and Potts lattice gauge theory to give an alternate proof of the existence of perimeter law and area law regimes for $i$-dimensional $q$-state Potts lattice gauge theory in $\Z^d,$ for prime values of $q.$ Towards that end, we prove an exact relationship between Wilson loop variables and a certain topological event.

An $i$-chain $\gamma$ is $\emph{null-homologous}$ in a cubical complex $P$ if $\gamma=\partial \rho$ for some $(i+1)$-chain $\rho\in C_{i+1}\paren{P;\,\mathcal{G}}$ (in other words, $\gamma=0$ the $i$-dimensional homology group).  Roughly speaking, when $i=1$ this means that $\gamma$ is ``bounded by a surface of plaquettes,'' with the surfaces under consideration being dependent on the coefficient group $\mathcal{G}.$ For $\mathcal{G}=\Z,$ $\gamma$ is null-homologous if and only if it is bounded by an orientable surface of plaquettes; when $\mathcal{G}=\F_2$ any surface of plaquettes will suffice. We denote the event that $\gamma$ is null-homologous by $V_\gamma,$ with the choice of coefficients being understood in context. We have the following result.

  \begin{Theorem}
\label{thm:comparison}
Suppose $q \geq 2$ is a prime integer, let $0<i<d-1,$ and let $\gamma$ be an $(i-1)$-cycle in $\Z^d$.  Then
\[\mathbb{E}_{\nu^{\mathbf{f}}}\paren{W_{\gamma}}=\mu_{\Z^d}\paren{V_{\gamma}}\,,\]
where $\nu^{\mathbf{f}}=\nu_{\Z^d,\beta,q,i-1,d}^{\mathbf{f}}$ is Potts lattice gauge theory and $\mu_{\Z^d}=\mu_{\Z^d,1-e^{-\beta},q,i}$ is the corresponding random cluster model.
\end{Theorem}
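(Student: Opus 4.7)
The plan is to establish the identity first on a finite cubical complex and then take an infinite-volume limit. On a finite complex $X$ containing the support of $\gamma$, I would use an Edwards--Sokal-type coupling between the plaquette random-cluster measure $\mu_{X}$ and Potts lattice gauge theory $\nu_{X}^{\mathbf{f}}$, presumably the one already constructed in the paper (following Hiraoka--Shirai). The joint measure should take the form
\[
\pi_{X}(P,f)\;\propto\;\prod_{\sigma\in X^{i}}\Bigl[(1-p)\mathbbm{1}\{\sigma\notin P\}+p\,\mathbbm{1}\{\sigma\in P\}\,\mathbbm{1}\{\delta f(\sigma)=0\}\Bigr],
\]
with $p=1-e^{-\beta}$, so that the $P$-marginal is $\mu_{X}$ and the $f$-marginal is $\nu_{X}^{\mathbf{f}}$. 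The conditional law of $f$ given $P$ is uniform over cochains $f\in C^{i-1}(X;\F_{q})$ whose restriction to $P$ is a cocycle, i.e.\ $\delta f\!\restriction_{P}=0$.

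The core of the argument is then a conditional character computation. Writing $\chi:\F_{q}\to\C$ for the standard character $g\mapsto g^{\C}$, I would split into two cases based on whether $\gamma$ is null-homologous in $P$. If $\gamma=\partial\rho$ for some $i$-chain $\rho$ supported in $P$, then for every $f$ in the support of the conditional law
\[
f(\gamma)=f(\partial\rho)=\delta f(\rho)=0,
\]
so $W_{\gamma}(f)=1$. If $\gamma$ is not null-homologous in $P$, then the linear functional $f\mapsto f(\gamma)$ on cocycles of $P$ (extended freely off $P$) is not identically zero, hence surjects onto $\F_{q}$, so by translation invariance of the uniform measure $f(\gamma)$ is uniform on $\F_{q}$ and character orthogonality gives $\mathbb{E}[\chi(f(\gamma))\mid P]=\frac{1}{q}\sum_{g\in\F_{q}}\chi(g)=0$. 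Combining the two cases,
\[
\mathbb{E}_{\nu_{X}^{\mathbf{f}}}(W_{\gamma})=\sum_{P}\mu_{X}(P)\,\mathbbm{1}\{\gamma\text{ is null-homologous in }P\}=\mu_{X}(V_{\gamma}).
\]

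To pass to $\Z^{d}$, I would take an exhaustion by finite cubical boxes $X_{n}\uparrow\Z^{d}$, each containing the support of $\gamma$. Since $W_{\gamma}$ is a bounded function depending only on the values of $f$ on the finitely many $(i-1)$-cells of $\gamma$, weak convergence $\nu_{X_{n}}^{\mathbf{f}}\to\nu^{\mathbf{f}}$ established earlier in the paper immediately gives $\mathbb{E}_{\nu_{X_{n}}^{\mathbf{f}}}(W_{\gamma})\to\mathbb{E}_{\nu^{\mathbf{f}}}(W_{\gamma})$. On the random-cluster side, I would write $V_{\gamma}=\bigcup_{R}V_{\gamma}^{(R)}$, where $V_{\gamma}^{(R)}$ is the local event that $\gamma$ bounds an $i$-chain supported within distance $R$; each $V_{\gamma}^{(R)}$ is a cylinder event, so $\mu_{X_{n}}(V_{\gamma}^{(R)})\to\mu_{\Z^{d}}(V_{\gamma}^{(R)})$, and monotone convergence in $R$ combined with the sandwich $\mu_{X_{n}}(V_{\gamma}^{(R)})\le\mu_{X_{n}}(V_{\gamma})$ pins down the limit.

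The main obstacle is the infinite-volume step, specifically reconciling the non-local nature of $V_{\gamma}$ in $\Z^{d}$ with the necessarily local nature of the weak-convergence argument on the Potts side. The constraint $0<i<d-1$ in the hypothesis should be what allows bounding chains to be controlled (giving enough ``room'' in the complement), but one needs to rule out escape of bounding chains to infinity in the limit and verify that the approximation by $V_{\gamma}^{(R)}$ interchanges correctly with the random-cluster limit, which likely requires the positive association and FKG-style monotonicity properties established earlier.
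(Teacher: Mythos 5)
Your finite-volume core is essentially the paper's argument: condition on the random-cluster configuration $P$, observe (via $\delta f\vert_P=0$) that $\gamma=\partial\rho$ in $P$ forces $f(\gamma)=0$, and otherwise use that $f\mapsto f(\gamma)$ descends to a nonzero functional on $H^{i-1}(P;\F_q)$ (this is where the perfect Kronecker pairing over a field, i.e.\ the universal coefficient theorem, is doing the work you wave at with ``not identically zero''), so $f(\gamma)$ is uniform on $\F_q$ and the character sum vanishes. That is exactly Proposition~\ref{prop:wilsondist} and Corollary~\ref{cor:VW}. One small phrasing issue: since $P$ by definition contains the entire $(i-1)$-skeleton of $X$, there are no $(i-1)$-cells ``off $P$'' to extend over; the conditional law of $f$ is simply uniform on $Z^{i-1}(P;\F_q)$.

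Where you genuinely diverge from the paper is the infinite-volume step. The paper never passes to the limit in the identity itself: it first extends the Edwards--Sokal coupling to $\Z^d$ (the corollary in Section~\ref{sec:potts_infinitevolume}, built from Lemma~\ref{lemma:finitesupport} via locally finite bases), after which Lemma~\ref{lemma:constclass} and Proposition~\ref{prop:wilsondist} are stated and proved directly for $X=\Z^d$, and Theorem~\ref{thm:comparison} is an immediate consequence. You instead prove the equality on finite boxes and then match two limits, which is workable but requires the upper bound you flag as an obstacle. Concretely: writing $V_\gamma^{(R)}$ for ``$\gamma$ bounds within radius $R$,'' your sandwich only gives $\liminf_n\mu_{X_n}(V_\gamma\text{ in }X_n)\ge\mu_{\Z^d}(V_\gamma)$; for the matching $\limsup$ you need that $V_\gamma$ in $X_n$ is an increasing cylinder event and that (for free boundary conditions) $\mu_{\Z^d}^{\mathbf f}\vert_{X_n}\geq_{\mathrm{st}}\mu_{X_n}^{\mathbf f}$, whence $\mu_{X_n}(V_\gamma\text{ in }X_n)\le\mu_{\Z^d}(V_\gamma)$ for every $n$. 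So your instinct that FKG/stochastic domination closes the gap is correct, but as written the proposal asserts rather than supplies this half of the argument. Incidentally, the restriction $0<i<d-1$ is not what controls the bounding chains here; the bound comes purely from monotonicity, and that restriction plays no role in this proof.
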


While we state this theorem for $\Z^d,$ the same proof works for any finite cubical complex. A precise definition of the infinite volume $i$-dimensional plaquette random cluster model $\mu_{\Z^d}=\mu_{\Z^d,p,q,i}$ is given in Section~\ref{sec:RCM_infinitevolume}. As a corollary, we have that Conjecture~\ref{conj:sharpness} is equivalent to the special case of the following conjecture for the plaquette random cluster model when $q$ is a prime integer and $\F=\Z_q.$

  \begin{Conjecture}
  \label{conj:sharpnessRCM}
 There exist constants $p_c\paren{q}\in \paren{0,1}$ and  $0<\newconstant\label{const:conj3}\paren{p,q},\newconstant\label{const:conj4}\paren{p,q}<\infty$ so that, for hyperrectangular $(i-1)$-boundaries $\gamma$ in $\Z^d,$
 \begin{align*}
    -\frac{\log\paren{\mu_{\Z^d}(V_\gamma)}}{\mathrm{Area}(\gamma) } \rightarrow &   \oldconstant{const:conj3}(p,q) \qquad && p < p_c(q)\\
   -\frac{\log\paren{\mu_{\Z^d}(V_\gamma)}}{ \mathrm{Per}(\gamma)} \rightarrow &   \oldconstant{const:conj4}(p,q) \qquad && p > p_c(q)
      \,,
\end{align*}
as all dimensions of $\gamma$ are taken to $\infty.$  Also, when $d=2i,$ $p_c\paren{q}=\frac{\sqrt{q}}{1+\sqrt{q}}$.
\end{Conjecture}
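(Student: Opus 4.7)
The plan has three components: (i) a low-temperature contour expansion yielding perimeter-law decay for $p$ close to $1$, (ii) a high-temperature polymer expansion yielding area-law decay for $p$ close to $0$, and (iii) a sharp-threshold argument in the style of Duminil-Copin, Raoufi, and Tassion, together with the self-duality of the model on $\T^{2i}$, to close the gap and pin the critical point at $p_{\mathrm{sd}}$ when $d=2i$.

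For the perimeter law ($p$ near $1$), I would exploit the fact that even when almost every plaquette is present, a bounding $i$-chain for $\gamma$ is locally constrained along $\gamma$ itself: each $(i-1)$-plaquette in $\mathrm{supp}(\gamma)$ must be met by an $i$-plaquette of the bounding chain, and the set of admissible local patterns incurs a cost of order unity per unit perimeter. A Peierls enumeration of the resulting local contours, weighted by $\paren{\frac{1-p}{p}}^{|S|}$ together with the bounded $q$-factor from the Betti-number weight, should give $\mu_{\Z^d}(V_\gamma)\leq e^{-c(p)\,\text{Per}(\gamma)}$ for $p$ sufficiently close to $1$, with a matching lower bound coming from an explicit ``almost minimal'' configuration that pays only a constant repair cost per boundary plaquette. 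For the area law ($p$ near $0$), every bounding $i$-chain $\rho$ satisfies $|\rho|\geq \text{Area}(\gamma)$; the probability that a specific such chain is entirely contained in $P$ is at most $\paren{\frac{p}{1-p}}^{|\rho|}$ times a bounded $q$-factor, and the number of bounding chains of size $k$ containing a given cell is at most exponential in $k$ by a standard contour count. Summing then yields $\mu_{\Z^d}(V_\gamma)\leq \paren{\frac{Cp}{1-p}}^{\text{Area}(\gamma)}$, with the matching lower bound supplied by the minimal bounding chain itself.

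The main obstacle, and the reason the statement remains a conjecture rather than a theorem, is closing the gap between these two regimes and pinning the critical point. My hope would be to adapt the randomized-algorithm/OSSS machinery behind the modern proofs of sharpness for the classical random-cluster model to produce a Menshikov/Aizenman--Barsky-type dichotomy: at any $p$ for which $\mu_{\Z^d}(V_\gamma)$ fails to decay exponentially in area, one must already have perimeter-law decay. For $d=2i$, the cellular duality on $\T^{2i}$ exchanges the $i$-plaquette configuration $P$ with its dual, sending $p$ to a dual parameter $p^*$ whose unique fixed point is $p_{\mathrm{sd}}=\sqrt{q}/(1+\sqrt{q})$; combined with the sharpness of homological percolation on $\T^{2i}$ proven in this paper and such a dichotomy, this would force $p_c(q) = p_{\mathrm{sd}}$. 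The central technical difficulty is designing low-revealment randomized algorithms that decide the bounding-chain event $V_\gamma$ in higher dimensions, since the breadth-first cluster exploration that drives OSSS in the $i=1$ case has no direct analogue for detecting $i$-dimensional bounding chains; finding a workable substitute is, in my view, the key missing idea.
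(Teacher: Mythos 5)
This statement is a \emph{Conjecture} in the paper, not a theorem, and the paper offers no proof of it --- so there is no argument of the authors' to compare against. You correctly recognize this: your proposal is explicitly a research program rather than a proof, and you identify the genuine obstruction (the lack of a workable OSSS-style exploration argument for the bounding-chain event $V_\gamma$ when $i>1$). That self-assessment is accurate. What you sketch in parts (i) and (ii) --- perimeter-law decay near $p=1$ and area-law decay near $p=0$, via contour/polymer bounds and stochastic comparison with Bernoulli plaquette percolation --- corresponds to what the paper actually \emph{does} prove, namely Theorem~\ref{thm:areaperimeter}, which establishes both regimes for extreme $p$ by comparing the plaquette random-cluster measure to the Bernoulli model of~\cite{ACCFR83}. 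Your part (iii), invoking self-duality on $\T^{2i}$ to pin $p_c = p_{\mathrm{sd}}$, corresponds to Theorem~\ref{thm:half}, but that theorem concerns the qualitatively different event of homological percolation (existence of giant cycles on the torus), not the area/perimeter-law dichotomy for Wilson-loop-type events on $\Z^d$; transferring the sharp threshold from one to the other is itself an open problem. So your plan is sound as far as it goes, but be aware that even granting a higher-dimensional OSSS dichotomy, one would still need an argument connecting the torus homological-percolation threshold to the infinite-volume $V_\gamma$ transition before concluding $p_c = p_{\mathrm{sd}}$ when $d=2i$; that bridge is a second missing ingredient beyond the one you name.
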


When $i=1,$ this conjecture is the sharpness of the  phase transition of the classical random cluster model, as proven in~\cite{duminil2019sharp}. In addition, the special case of the conjecture for $2$-dimensional independent plaquette percolation in $\Z^3$ ($i=2,d=3,q=1$) is a theorem of Aizenman, Chayes, Chayes, Fr\"olich, and Russo, who demonstrated that this phase transition is dual to the bond percolation transition\footnote{Their paper phrases this result in terms of possibly distinct critical probabilities, but these have since been shown to coincide~\cite{menshikov1986coincidence,grimmett1990supercritical}.}. Their proof works for general coefficient fields.

Another consequence of Theorem~\ref{thm:comparison}, together with a comparison with plaquette percolation, is a new proof of the existence of area law and perimeter law regimes for Potts lattice gauge theory. We state a more general result for the plaquette random cluster model.

\begin{Theorem}
\label{thm:areaperimeter}
Let $\F$ be a field, $q\in \left[1,\infty\right)$ and let $0<i<d.$ There exist positive, finite constants $\newconstant\label{const:3}=\oldconstant{const:3}(p,q,i,d,\mathbb{F}),\newconstant\label{const:4}=\oldconstant{const:4}(p,q,i,d,\mathbb{F})$ and $0<p_1\leq p_2<1$ so that, for hyperrectangular $(i-1)$-boundaries $\gamma$ in $\Z^d,$
\begin{equation}
    \label{eq:wilsoninequalities1}
 \exp(-\oldconstant{const:3} \mathrm{Area}(\gamma)) \leq \mu_{\Z^d}(V_\gamma) \leq  \exp(-\oldconstant{const:4} \mathrm{Per}(\gamma))
\end{equation}
for all $p \in \paren{0,1}$ and so that
\begin{align*}
&-\frac{\log\paren{ \mu_{\Z^d}(V_\gamma)}}{\mathrm{Per}(\gamma))}\;\;=\;\; \Theta\paren{1} &&\quad \text{if } p < p_1\\
&-\frac{\log\paren{ \mu_{\Z^d}(V_\gamma)}}{\mathrm{Area}(\gamma)}\;\; \rightarrow \;\; \oldconstant{const:3} && \quad \text{if } p>p_2\,.
\end{align*}
\end{Theorem}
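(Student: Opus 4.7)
The plan is to sandwich the plaquette random-cluster measure $\mu = \mu_{\Z^d, p, q, i, \F}$ between two Bernoulli plaquette percolation measures and then combine this with a local obstruction argument on the support of $\gamma$. Since enlarging $P$ only adds candidate bounding chains, the event $V_\gamma$ is increasing. Positive association of $\mu$ (Hiraoka--Shirai) together with the usual Holley-type comparisons then yield
\[
\mu^{\mathrm{Bern}}_{p'} \leqst \mu \leqst \mu^{\mathrm{Bern}}_p, \qquad p' \coloneqq \frac{p}{p + q(1-p)} \in (0,1),
\]
which I use throughout.

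For the lower bound in~\eqref{eq:wilsoninequalities1}, I take $\rho_0$ to be a minimum bounding chain of $\gamma$, so $|\rho_0| = \mathrm{Area}(\gamma)$. Because $\{\rho_0 \subseteq P\} \subseteq V_\gamma$ is increasing,
\[
\mu(V_\gamma) \;\geq\; \mu^{\mathrm{Bern}}_{p'}(\rho_0 \subseteq P) \;=\; (p')^{\mathrm{Area}(\gamma)} \;=\; \exp\!\paren{-c_3\, \mathrm{Area}(\gamma)}.
\]
For the upper bound, I observe that on $V_\gamma$ every $(i-1)$-cell $\sigma \in \mathrm{supp}(\gamma)$ must be incident to at least one $i$-cell of $P$: otherwise the coefficient of $\sigma$ in $\partial \rho$ vanishes for every $\rho \subseteq P$, contradicting $\sigma \in \mathrm{supp}(\gamma)$. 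I greedily select $S \subseteq \mathrm{supp}(\gamma)$ with $|S| \geq \alpha\, \mathrm{Per}(\gamma)$ (for some $\alpha = \alpha(i,d) > 0$) such that the sets of $i$-cells incident to distinct elements of $S$ are pairwise disjoint. Under $\mu^{\mathrm{Bern}}_p$ with $p<1$, the associated local events are independent with probability at most $\eta \coloneqq 1-(1-p)^{2(d-i+1)} < 1$ each, whence
\[
\mu(V_\gamma) \;\leq\; \mu^{\mathrm{Bern}}_p(V_\gamma) \;\leq\; \eta^{|S|} \;\leq\; \exp\!\paren{-c_4\, \mathrm{Per}(\gamma)}.
\]

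To promote the sandwich to the asymptotic statements at $p<p_1$ and $p>p_2$, I plan to feed in sharp subcritical/supercritical estimates for Bernoulli plaquette percolation (in the spirit of Aizenman--Chayes--Chayes--Fröhlich--Russo) through a Peierls/polymer expansion. The hyperrectangular structure of $\gamma$ controls the combinatorial count of candidate bounding chains, so these estimates can be shown to match the general lower bound in the area-law regime and the general upper bound in the perimeter-law regime, yielding $-\log\mu(V_\gamma)/\mathrm{Area}(\gamma) \to c_3$ and $-\log\mu(V_\gamma)/\mathrm{Per}(\gamma) = \Theta(1)$ in their respective ranges of $p$.

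The main obstacle is this last refinement: the sandwich alone does not identify the exact asymptotic rate $c_3$, so the Peierls/polymer expansion must be executed uniformly in the size of $\gamma$ and must transfer cleanly between the two Bernoulli parameters $p$ and $p'$. Controlling the enumeration of bounding surfaces for hyperrectangular $\gamma$, and cleanly separating the perimeter and area contributions, is the chief technical step.
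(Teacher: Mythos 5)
Your proof of the two inequalities in~\eqref{eq:wilsoninequalities1} is correct and takes essentially the same route as the paper: the paper likewise sandwiches $\mu_{\Z^d}$ between the Bernoulli plaquette measures at $p'$ and $p$ (Corollary~\ref{cor:zddom}, a Holley comparison coming from Equation~\ref{eq:openprob}) and then cites the ACCFR Bernoulli bounds (Theorem~\ref{thm:accfrhigher}), whereas you reprove those Bernoulli bounds directly via a minimal-bounding-chain lower bound and a separated local-obstruction upper bound. Both are valid; your version is more self-contained.

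The asymptotic statements, however, are not established by your outline, and you point at the gap without closing it. The missing ingredients are: (i) a subadditivity/tiling argument showing that $-\log\mu_{\Z^d}(V_{\gamma_n})/\mathrm{Area}(\gamma_n)$ converges as all dimensions of $\gamma_n$ diverge --- this is Lemma~\ref{lemma:areaconvergence}, it needs only positive association and translation invariance (as in Proposition~2.4 of~\cite{ACCFR83}), and no polymer expansion on $\mu$ itself is required; and (ii) the ACCFR subcritical and supercritical asymptotics for Bernoulli percolation, transferred through the appropriate leg of the sandwich. Once the Fekete-type limit $c_3$ is known to exist, the sandwich plus the Bernoulli area-law rate at both endpoint parameters pins $c_3$ between two positive finite constants when $p$ is small enough. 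For $p$ large, note that your lower bound $\mu(V_\gamma)\geq(p')^{\mathrm{Area}(\gamma)}$ decays in the \emph{area} and is therefore far too weak to give $-\log\mu(V_\gamma)=O(\mathrm{Per}(\gamma))$; instead one needs the ACCFR supercritical perimeter-law lower bound applied to $\mu^{\mathrm{Bern}}_{p'}$, which requires choosing $p_2$ so that $p'>\tilde p_2(i,d)$. The ``uniformity in $\gamma$'' difficulty you raise is precisely what the tiling lemma dissolves, and it is also why the same $c_3$ can serve in both displays (an ad hoc constant such as $-\log p'$ from your lower bound would not match the convergence). Finally, a small remark on the statement itself: the roles of $p_1$ and $p_2$ in the displayed asymptotics appear to be transposed relative to Theorem~\ref{thm:accfrhigher} and to the conclusion of the paper's own proof; the area law is the subcritical ($p<p_1$) regime and the perimeter law is the supercritical ($p>p_2$) regime.
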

Note that the constants could depend on the boundary conditions used to construct the infinite volume plaquette random cluster model.

  The perimeter law phase can be interpreted as a ``hypersurface--dominated regime'' in a quantitative sense, where there are many large hypersurfaces of $i$-plaquettes in $\Z^d$ \cite{ACCFR83}. For plaquette percolation on the torus, Duncan, Kahle, and Schweinhart~\cite{duncan2020homological} showed that this phase coincides with a ``hypersurface--dominated regime'' in the qualitative sense of homological percolation, that is, one marked by the appearance of $i$-dimensional cycles in the random cubical complex that are representatives of nonzero homology classes in the ambient torus (see Section~\ref{sec:hom} for more detail).  Unlike the theorem of~\cite{ACCFR83}, this homological percolation result generalizes to to $i$-dimensional plaquette percolation in the $2i$ dimensional torus. Here, we further generalize it to the $i$-dimensional random-cluster model in the $2i$-dimensional torus.
  
\begin{Theorem}
\label{thm:half}
Suppose $d=2i,$ and let $\phi_{*} : H_i\paren{P;\,\F} \to H_i\paren{\T^d_N;\,\F}$ be the induced map on homology with coefficients in a field $\F$ with $\mathrm{char}\paren{F} \neq 2.$ Denote by $A$ and $S$ the events that $\phi_*$ is non-trivial and surjective, respectively, and set $p_{\mathrm{sd}} = \frac{\sqrt{q}}{1+\sqrt{q}}.$ Then
\[\begin{cases}
\mu_{\T^d_N}\paren{A}\rightarrow 0 & p<p_{\mathrm{sd}}(q) \\
\mu_{\T^d_N}\paren{S}\rightarrow 1 & p>p_{\mathrm{sd}}(q) \\
\end{cases}\]
as $N\rightarrow \infty.$
\end{Theorem}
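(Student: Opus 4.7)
The plan is to combine a self-duality for the plaquette random-cluster model on $\T^{2i}_N$ with an OSSS-based sharp-threshold argument, extending the $q=1$ strategy of Duncan--Kahle--Schweinhart~\cite{duncan2020homological}. The first step is to develop a cubical duality. Since $d-i=i$, each $i$-plaquette $\sigma$ is paired with a unique dual $i$-plaquette $\sigma^{*}$, and one sets $P^{*}$ to contain $\sigma^{*}$ exactly when $\sigma\notin P$. Using Poincar\'e--Lefschetz duality on $\T^{2i}_N$ together with an Euler-characteristic computation, I would rewrite $\mathbf{b}_{i-1}(P;\F)$ as $\mathbf{b}_{i-1}(P^{*};\F)$ plus a term affine in $|P|$ and a constant depending only on $X$. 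Plugging this into the Gibbs weights yields a duality map $p\mapsto p^{*}(p,q)$ with unique fixed point $p_{\mathrm{sd}}=\sqrt{q}/(1+\sqrt{q})$. The hypothesis $\mathrm{char}(\F)\neq 2$ enters at the Poincar\'e-duality step, where the intersection pairing on $H_i(\T^{2i}_N;\F)$ must be nondegenerate.

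The second step is a topological a priori bound at the self-dual point. Suppose the primal event $S^{c}$ holds and pick a class $\alpha\in H_i(\T^{2i}_N;\F)$ outside $\phi_{*}(H_i(P;\F))$. By Poincar\'e duality its intersection-dual class $\alpha^{\vee}$ pairs to $0$ with every class in $\phi_{*}(H_i(P;\F))$, so $\alpha^{\vee}$ can be represented by an $i$-cycle supported on plaquettes disjoint from $P$, i.e., in $P^{*}$. This gives the containment $S^{c}_{\text{primal}}\subseteq A_{\text{dual}}$. Combined with the self-duality from Step~1, one obtains $\mu_{p_{\mathrm{sd}}}(A)+\mu_{p_{\mathrm{sd}}}(S)\leq 1$, and since $S\subseteq A$, the key a priori bound $\mu_{p_{\mathrm{sd}}}(A)\leq 1/2$ follows.

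For the final step, I would use the fact that $A$ is monotone increasing in $P$ and apply the OSSS-type sharpness machinery of Duminil-Copin--Raoufi--Tassion, adapted to homological percolation in~\cite{duncan2020homological}, to prove a differential inequality forcing $p\mapsto\mu_{\T^{2i}_N,p}(A)$ to transition sharply from $o(1)$ to $1-o(1)$ at a single critical value $\tilde p_c$. The bound from Step~2 pins $\tilde p_c=p_{\mathrm{sd}}$, yielding $\mu_p(A)\to 0$ for $p<p_{\mathrm{sd}}$. For $p>p_{\mathrm{sd}}$, the duality sends the dual model to $p^{*}<p_{\mathrm{sd}}$, so $\mu_{p^{*}}(A_{\text{dual}})\to 0$; combined with the containment $S^{c}_{\text{primal}}\subseteq A_{\text{dual}}$ this gives $\mu_p(S)\to 1$. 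The main obstacle I anticipate is running the OSSS scheme in this higher-dimensional plaquette setting: one must design a randomized decision algorithm detecting a homologically nontrivial $i$-cycle whose revealment at each plaquette is controlled in terms of that same probability, and combine this with positive association of the plaquette random-cluster model, all while accommodating the nontrivial cycle structure of the ambient torus.
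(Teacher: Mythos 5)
The broad architecture --- a self-duality on $\T^{2i}_N$, a topological bound at the self-dual point, and a sharp-threshold argument to propagate it --- matches the paper's strategy, but your second step has a sign error, and your third step substitutes a genuinely different mechanism (OSSS) that leaves an unresolved gap the paper's approach avoids.

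In the a priori bound, your inequality is reversed. The containment $S^c_{\text{primal}}\subseteq A_{\text{dual}}$ yields $1-\mu(S)\leq\mu(A_{\text{dual}})$, and self-duality equates $\mu(A_{\text{dual}})$ with $\mu(A)$, giving $\mu(A)+\mu(S)\geq 1$, not $\leq 1$. Combined with $\mu(S)\leq\mu(A)$ this gives $\mu_{p_{\mathrm{sd}}}(A)\geq 1/2$, which is the bound the paper actually uses: it forces $p_u\leq p_{\mathrm{sd}}$, where $p_u$ is defined via the threshold function $\lambda$ with $\mu_{\lambda}(A)=1/2$. Your claimed $\mu(A)\leq 1/2$ does not follow and need not hold, since $A$ strictly contains $S$. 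Separately, the intersection-pairing argument is misstated: one should pick a nonzero class in the annihilator $\paren{\phi_*\paren{H_i\paren{P;\,\F}}}^{\perp}$ under the pairing, not the "intersection-dual of a class outside the image" --- the Poincar\'e dual of a class not in the image need not annihilate the image. The correct content is precisely Equation~\ref{eq:2}, that $b_i + b_i^{\bullet} = \binom{2i}{i}$.

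For the sharp threshold, the paper does not use OSSS. It applies the Graham--Grimmett influence theorem (Theorem~\ref{thm:grahamgrimmett}) --- which requires only the FKG lattice condition and a transitive symmetry group, no decision tree --- together with Lemma~\ref{lemma:spinning} to bootstrap $\mu_{\lambda}(A)\geq 1/2$ to $\mu_{\lambda}(S)\geq\delta>0$ with $\delta$ independent of $N$, and then integrates the resulting differential inequality for the increasing event $S$. Your OSSS route requires a randomized algorithm detecting a giant $i$-cycle with controlled revealment; you rightly flag this as the main obstacle, but it is not resolved, and in the plaquette setting there is no obvious analogue of exploring a connected component. That is a serious gap. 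Finally, the hypothesis $\mathrm{char}\paren{\F}\neq 2$ does not enter at Poincar\'e duality, which holds for any coefficient field on an orientable manifold; it is needed in the spinning-lemma step, where $H_i\paren{\T^{2i}_N;\,\F}$ must be an irreducible representation of the symmetry group of the torus, and this fails in characteristic $2$.
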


When $q$ is a prime integer and $i=2$, the phase transition is marked by the emergence of ``giant sheets'' on which the Wilson loop variables are constant within homology classes. The critical point is alternatively characterized by a qualitative change in the generalized Swendsen--Wang dynamics from local to non-local behavior, as defined in Section~\ref{sec:sw} below.

We also generalize the other two main theorems of~\cite{duncan2020homological} from plaquette percolation to the plaquette random-cluster model. The proofs of these results and the previous one rely heavily on the duality properties of the plaquette random-cluster model; in Theorem~\ref{theorem:duality} we show that the dual of $\mu_{\T^d_N,p,q,i}$ is distributed approximately as $\mu_{\T^d_N,p^*,q,d-i}$ in a precise sense, where 
\[p^*=p^*(p,q)= \frac{\paren{1-p}q}{\paren{1-p}q + p}\,.\]

Next, we prove that there are dual sharp phase transitions for $i=1$ and $i=d-1$ that are consistent with the critical probability for the random-cluster model in $\Z^d,$ assuming a conjecture about the continuity of the critical probability in slabs. Let 
\[S_k \coloneqq \Z^2 \times \set{-k,-k+1,\ldots,k}^{d-2} \subset \Z^d\,.\]
Fix $q \geq 1$ and let $p_c\paren{S_k}$ be the critical probability for the $1$-dimensional random-cluster model with parameter $q$ on $S_k$ with free boundary conditions. This can be constructed by a limit of free random-cluster models on 
\[S_{k,l} \coloneqq \set{-l,-l+1,\ldots,l}^2 \times \set{-k,-k+1,\ldots,k}^{d-2}\,.\]
Since $S_{k,l} \subset S_{k+1,l},$ it follows that $p_c\paren{S_k}$ is decreasing in $k.$ Then let \[p_c^{\mathrm{slab}} = \lim_{k \to \infty} p_c\paren{S_k}\,.\]

Let $\hat{p}_c=\hat{p}_c(q,d)$ be the critical threshold for the random-cluster model with parameter $q$ on $\Z^d.$ These two critical values are conjectured to coincide.

\begin{Conjecture}[\cite{pisztora1996surface}]\label{con:slab}
For all $q \geq 1,$
\[p_c^{\mathrm{slab}} = \hat{p}_c\,.\]
\end{Conjecture}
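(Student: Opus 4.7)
The inequality $p_c^{\mathrm{slab}} \geq \hat{p}_c$ is immediate, since any infinite cluster in a slab $S_k \subset \Z^d$ is \emph{a fortiori} an infinite cluster in $\Z^d$, forcing $p_c\paren{S_k} \geq \hat{p}_c$ for every $k$. The content of the conjecture is the reverse bound, which I would attack via a Grimmett--Marstrand style block renormalization, adapted to exploit positive association (in place of independence) for the random-cluster model.

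Fix $p > \hat{p}_c$ and choose an auxiliary $p_0 \in \paren{\hat{p}_c, p}$. By sharpness of the phase transition for the random-cluster model at $q \geq 1$ (\cite{duminil2019sharp}), $\theta_{\Z^d}\paren{p_0} > 0$. A standard application of FKG, ergodicity, and finite energy then shows that for any $\epsilon>0$ there exists $L$ such that inside the box $\brac{-L,L}^d$ a ``crossing cluster event'' --- a single open cluster joining each of the $2d$ faces in a prescribed region --- has probability at least $1-\epsilon$. Partitioning a slab $S_k$ of appropriately chosen thickness $2k+1$ into blocks of side $L$ indexed by $\Z^2$, I call a block \emph{good} when this crossing event holds inside it. Adjacent good blocks share a face, so their crossing clusters merge, and an infinite good-block cluster in the planar index set yields an infinite random-cluster cluster in $S_k$.

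The heart of the argument, and its main obstacle, is showing that the block process stochastically dominates \emph{independent} supercritical site percolation on $\Z^2$. Because the random-cluster model lacks independence, FKG alone only gives positive correlations, which is insufficient. The standard workaround is a sprinkling argument: the budget $p - p_0$ is reserved to ``close up'' the construction by opening a few additional plaquettes after conditioning on the $p_0$-configuration, so that the goodness of each block can be decided by a local rule depending only on a small, well-separated core of plaquettes. Combined with the Liggett--Schonmann--Stacey domination theorem, this upgrades positive association into genuine Bernoulli stochastic domination at density arbitrarily close to $1$; choosing $\epsilon$ so that the resulting block process is supercritical on $\Z^2$ produces an infinite random-cluster cluster in $S_k$, giving $p_c\paren{S_k} \leq p$ for all large $k$. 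Letting $k \to \infty$ and then $p \downarrow \hat{p}_c$ closes the loop.

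The delicate point in the sprinkling step is controlling how conditioning on the cluster structure in one block affects the distribution in its neighbors: the domain Markov property and the FKG inequality together ensure that adverse conditioning manifests only through ``wired'' boundary conditions, which actually \emph{help} subsequent block events by positive association. Calibrating $p_0$, $L$, and $k$ to make each comparison quantitative --- and in particular to ensure that the sprinkling step succeeds with probability uniformly close to $1$ regardless of the outcome in neighboring blocks --- is the main technical burden, and is the reason the strategy succeeds for the positively associated regime $q \geq 1$ but would break down outside it.
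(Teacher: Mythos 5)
This statement is a \emph{conjecture} in the paper, not a theorem: the authors explicitly attribute it to Pisztora, leave it unproved, and make Theorem~\ref{thm:one} conditional on it. For $q=1$ it is the Grimmett--Marstrand theorem; for $q=2$ it was resolved by Bodineau via Ising-specific arguments; for general $q\geq 1$ it remains open. So the honest assessment is that you have sketched the \emph{known strategy} and the \emph{known obstruction}, but you have not closed the obstruction, and the paper does not claim to either.

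The gap is precisely the step you flag as ``the heart of the argument'': upgrading positive association to genuine Bernoulli domination of the block process. Your appeal to Liggett--Schonmann--Stacey is not available here, because LSS requires a uniform bound on the conditional probability of a block being good given \emph{arbitrary} information about all other (non-adjacent) blocks, and the random-cluster model has long-range dependence through the cluster weight $q^{\mathbf{b}_0}$. In particular, conditioning on the \emph{failure} of a neighboring block event is conditioning on a \emph{decreasing} event, which by FKG pushes the conditional law in the wrong direction for your crossing events; the assertion that adverse conditioning ``manifests only through wired boundary conditions, which actually help'' is not correct, since the relevant conditioning includes negative information (absence of open paths). This is exactly why the Grimmett--Marstrand ``reseeding'' machinery, which leans on independence to refresh the unexplored region, does not transfer. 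The sprinkling step compounds the difficulty: in Bernoulli percolation one sprinkles by adding i.i.d.\ extra bits, but for $\mu_{p}$ versus $\mu_{p_0}$ the monotone coupling is non-explicit and not of this additive form, so a ``local rule depending only on a small core'' does not obviously survive the coupling. In short, you have correctly identified the program and correctly identified where it breaks, but the break is the open problem itself rather than a technicality to be calibrated.
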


\begin{Theorem}
\label{thm:one}
Let $q \geq 1.$ Then the following statements hold:

If $i=1$ then
\[\begin{cases}
\mu_{\T^d_N}\paren{A}\rightarrow 0 & p<\hat{p}_c \\
\mu_{\T^d_N}\paren{S}\rightarrow 1 & p>p_c^{\mathrm{slab}} \\
\end{cases}\]
as $N\rightarrow \infty.$

Furthermore, if $i=d-1$ then 
\[\begin{cases}
\mu_{\T^d_N}\paren{A}\rightarrow 0 & p<(\hat{p}_c)^* \\
\mu_{\T^d_N}\paren{S}\rightarrow 1 & p>(p_c^{\mathrm{slab}})^* \\
\end{cases}\]
as $N\rightarrow \infty.$

In particular, if Conjecture~\ref{con:slab} is true, then there are sharp thresholds at $\hat{p}_c = p_c^{\mathrm{slab}}$ and $\paren{\hat{p}_c}^* = \paren{p_c^{\mathrm{slab}}}^*$ respectively.
\end{Theorem}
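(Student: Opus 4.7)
The plan is to prove the $i=1$ statements by direct arguments and then deduce the $i=d-1$ case from Theorem~\ref{theorem:duality} combined with Poincar\'e duality on $\T^d$. For $i=1$ the model reduces to the classical bond random-cluster measure on $\T^d_N$ (since $\mathbf{b}_0$ counts connected components and is $\F$-independent), so the event $A$ (resp.\ $S$) is that there is an open self-avoiding loop winding non-trivially around some (resp.\ every) coordinate direction of the torus. I will establish the two halves of the $i=1$ claim by the standard subcritical-decay and slab-percolation arguments, respectively, and then invoke duality.

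For the subcritical bound ($p<\hat{p}_c$), I would compare $\mu_{\T^d_N,p}$ with the infinite-volume random-cluster measure on $\Z^d$ by lifting to the universal cover: for $q\geq 1$ the standard comparison of boundary conditions gives a stochastic domination that preserves edge-connectivity events in any fixed box, and any non-trivial winding loop in $\T^d_N$ lifts to an open self-avoiding path in $\Z^d$ of length at least $N$ starting from some vertex of a fundamental domain. By the Duminil-Copin--Raoufi--Tassion sharpness theorem, below $\hat{p}_c$ the connection probability in $\Z^d$ between a fixed vertex and any vertex at graph distance $\geq r$ decays as $e^{-cr}$, so a union bound over the $N^d$ possible starting vertices yields $\mu_{\T^d_N}(A)\leq C N^d e^{-cN}\to 0$.

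For the supercritical bound ($p>p_c^{\mathrm{slab}}$), fix $k$ large enough that the free-boundary random-cluster measure on $S_k$ has an infinite open cluster. For each ordered pair of coordinate directions $(e_j,e_\ell)$ of $\T^d_N$, embed (for $N$ large enough) a translate of the finite slab $\{0,\dots,N-1\}^2\times\{-k,\dots,k\}^{d-2}$ with its two long directions aligned with $e_j$ and $e_\ell$. Comparison of boundary conditions shows that the restriction of $\mu_{\T^d_N,p}$ to this slab stochastically dominates the free random-cluster measure on the corresponding finite piece of $S_k$, which by standard crossing and uniqueness-of-infinite-cluster arguments contains an open cluster winding once around $e_j$ with probability tending to $1$. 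A union bound over $j=1,\dots,d$ then gives $\mu_{\T^d_N}(S)\to 1$.

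The $i=d-1$ case follows by combining Theorem~\ref{theorem:duality} with Poincar\'e duality on $\T^d$: a primal non-trivial winding $(d-1)$-hypersurface in a homology direction obstructs every dual open $1$-loop winding in the Poincar\'e-dual direction, and conversely, so the events $A$ and $S$ for the primal $(d-1)$-model are interchanged with $S^c$ and $A^c$ for the dual $1$-model at parameter $p^\ast$. Transporting the $i=1$ conclusions at $p^\ast$ through this correspondence and the involution $p\mapsto p^\ast$ yields the stated $i=d-1$ thresholds. The main obstacle is the supercritical step: upgrading percolation in an embedded slab to an open cluster that actually winds around $\T^d_N$ (rather than merely being a long cluster of diameter $\Theta(N)$) requires a careful uniqueness argument and attention to the topology of how the slab sits inside the torus. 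A secondary technical difficulty is that Theorem~\ref{theorem:duality} provides only an approximate equality of the primal and dual laws, so one must check that the approximation error does not disturb the indicators of $A$ and $S$ in the $N\to\infty$ limit.
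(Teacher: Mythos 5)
Your subcritical argument and your treatment of the $i=d-1$ case by duality (via Theorem~\ref{theorem:duality} together with the identity $b_k + b_k^\bullet = \binom{d}{k}$) are essentially the paper's own. For the subcritical bound, the paper proceeds exactly as you sketch: for each vertex $x$ the event that a giant cycle passes through $x$ is contained in the connection event $\{x \leftrightarrow \partial\Lambda_{\lfloor N/2 \rfloor}\}$ (since $\Lambda_{\lfloor N/2\rfloor}$ is contractible in the torus), the torus measure on this box is dominated by the wired measure, Theorem~\ref{thm:expdecay} (Duminil-Copin--Raoufi--Tassion) supplies exponential decay, and a union bound over $N^d$ starting vertices finishes. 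Your mild reformulation via lifting to the universal cover is a cosmetic difference. The duality step and your caveat about the ``balanced vs.\ unbalanced'' Radon--Nikodym factor are also consistent with the paper, which handles this via the uniform two-sided density bounds between $\mu$ and $\tilde{\mu}$.

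The supercritical step, however, is where your proposal has a genuine gap, and it is precisely the step you flag as the main obstacle. You propose to show that an embedded slab piece contains a winding cluster ``with probability tending to $1$'' via ``standard crossing and uniqueness-of-infinite-cluster arguments.'' In the random-cluster setting this is exactly what is not available off the shelf: uniqueness of the infinite cluster does not directly yield an open winding loop in the finite torus, and proving that the crossing cluster actually closes up into a non-contractible loop with probability $\to 1$ is a serious gluing problem that you do not resolve. The paper takes a different route that sidesteps the need for a direct probability-to-one argument: it proves a uniform (in $N$) positive lower bound on the probability of a giant cycle, via a careful pinning construction. Lemma~\ref{lemma:slabsharp} gives crossing probabilities in $S_k$ bounded below, Lemma~\ref{lemma:boxoverlap} assembles these into a uniform lower bound that two prescribed distant vertices in a slab-box are connected (paying a bounded ``sprinkling'' cost of $6k(d-2)$ additional edges to pin overlapping paths together), and four such overlapping boxes are chained around the torus to produce a giant cycle with probability $\geq \oldconstant{const:11}^4$ at some $p' \in (p_c^{\mathrm{slab}}, p)$. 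The FKG inequality then bounds $\mu(S) \geq \mu(H)^d$ below by a constant, and the Graham--Grimmett sharp threshold theorem (Theorem~\ref{thm:grahamgrimmett}), which applies because the measure is monotone and the torus symmetry group acts transitively on plaquettes, converts the uniform lower bound at $p'$ into $\mu_{\T^d_N,p}(S) \to 1$. Without this last sharp-threshold step (or a substitute), your argument stalls at the very point you identify as problematic.
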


In general dimensions we demonstrate the existence of a sharp threshold function defined as follows: Let $\lambda = \lambda\paren{N,q,i,d}$ satisfy
\begin{align*}
    \mu_{\T^d_N,\lambda,q,i}\paren{A} = 1/2
\end{align*}
and let $p_l = p_l\paren{q,i,d} \coloneqq \liminf_{N \to \infty} \lambda\paren{N,q,i,d}$ and $p_u = p_u\paren{q,i,d} \coloneqq \limsup_{N \to \infty} \lambda\paren{N,q,i,d}.$

\begin{Theorem}
\label{thm:weak}

Let $q \geq 1$ be a prime integer and suppose that $\mathrm{char}\paren{F} \neq 2.$ For every $d \ge 2,$ $1 \le i \le d-1,$ and $\epsilon > 0$
\[\begin{cases}
\mu_{\T^d_N,\lambda-\epsilon,q,i}\paren{A}\rightarrow 0 \\
\mu_{\T^d_N,\lambda+\epsilon,q,i}\paren{S}\rightarrow 1 \\
\end{cases}\]
as $N\rightarrow \infty.$

Moreover, for every $d \ge 2$ and $1 \le i \le d-1$ we have 
\[0<p_l \leq p_u<1\,,\]
and $p_l$ and $p_u$ have the following properties.
\begin{enumerate}[label=(\alph*)]
    \item (Duality) $p_u\paren{q,i,d} = \paren{p_l\paren{q,d-i,d}}^*.$
    \item (Monotonicity in $i$ and $d$) $p_u\paren{q,i,d} < p_u\paren{q,i,d-1}< p_u\paren{q,i+1,d}$ 
    for $0<i<d-1.$ 
\end{enumerate}
\end{Theorem}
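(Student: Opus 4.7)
The plan is to combine the OSSS-based sharp-threshold methodology of Duminil-Copin, Raoufi, and Tassion \cite{duminil2019sharp} for the classical random-cluster model with stochastic domination by plaquette percolation, the duality of Theorem~\ref{theorem:duality}, and elementary embedding arguments that reduce the required bounds to the $q=1$ results of \cite{duncan2020homological}.

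For the sharpness assertion, both $A$ and $S$ are increasing in the set of $i$-plaquettes, since adding a plaquette can only enlarge $\operatorname{im}(\phi_*)$. The measure $\mu_{\T^d_N,p,q,i}$ is FKG (as $q\geq 1$) and invariant under the isometry group of $\T^d_N$, which acts transitively on $i$-plaquettes. Adapting the OSSS differential inequality of \cite{duminil2019sharp} to this setting should yield $\mu_{\lambda-\epsilon}(A)\to 0$, as the window on which $p\mapsto\mu_p(A)$ crosses $(\delta,1-\delta)$ shrinks to zero with $N$. For the complementary statement, observe that $H_i(\T^d_N;\F)\cong\F^{\binom{d}{i}}$ has a canonical basis indexed by $i$-element subsets of coordinate axes, and that coordinate-permutation symmetry acts transitively on this basis; thus $\mu_p(A)\to 1$ implies that every basis element lies in $\operatorname{im}(\phi_*)$ with high probability, giving $\mu_{\lambda+\epsilon}(S)\to 1$ by a union bound.

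For the bounds and part (a): because $q\geq 1$, the measure $\mu_{\T^d_N,p,q,i}$ is stochastically sandwiched between Bernoulli plaquette percolations at parameters $p/(p+q(1-p))$ and $p$, and the non-trivial percolation bounds of \cite{duncan2020homological} transfer through this sandwich to yield $0<p_l\leq p_u<1$. For the duality relation in (a), Theorem~\ref{theorem:duality} identifies the dual of $\mu_{\T^d_N,p,q,i}$ with $\mu_{\T^d_N,p^*,q,d-i}$ up to negligible boundary corrections, and Poincar\'e--Lefschetz duality on the torus (using $\operatorname{char}(\F)\neq 2$) identifies non-triviality of the primal $\phi_*$ with failure of surjectivity of the dual $\phi_*$; matching the defining equation $\mu_\lambda(A)=1/2$ on both sides then yields $p_u(q,i,d)=(p_l(q,d-i,d))^*$. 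For the monotonicity (b), embedding $\T^{d-1}_N\hookrightarrow\T^d_N$ along any coordinate hyperplane shows $p_u(q,i,d)\leq p_u(q,i,d-1)$, since a non-trivial $i$-cycle in $\T^{d-1}_N$ remains non-trivial in $\T^d_N$; strict inequality should follow from the additional freedom afforded by plaquettes in the extra direction, along the lines of the strict comparison in \cite{duncan2020homological}. The remaining inequality $p_u(q,i,d)<p_u(q,i+1,d)$ then follows by applying (a) to both sides and invoking the first monotonicity statement in codimension.

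The principal technical obstacle is the OSSS adaptation. Unlike the classical connectivity setting, $A$ is a global topological event, so the exploration process must reveal $i$-plaquettes along potential boundaries of $(i-1)$-cycles rather than along connected components, and verifying an OSSS-type inequality with uniformly small revealment probabilities in this homological setting is the most delicate step. A secondary subtlety is ensuring that the ``boundary corrections'' in the dual coupling of Theorem~\ref{theorem:duality} do not shift the threshold by more than $o(1)$, which is needed to match $p_l$ and $p_u^*$ exactly.
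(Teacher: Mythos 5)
Your outline diverges from the paper's proof in three places, each of which hides a genuine gap.

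\textbf{(1) The sharp-threshold machinery.} You propose to adapt the OSSS differential inequality, and you correctly flag this as the principal technical obstacle. But the paper never uses OSSS here. The engine is Theorem~\ref{thm:grahamgrimmett} (Graham--Grimmett), an influence-type theorem for positive monotonic measures that requires no exploration algorithm at all: one only needs the FKG lattice condition (Theorem~\ref{thm:i-FKG}), the fact that $A$ and $S$ are increasing, and that the isometry group of $\T^d_N$ acts transitively on $i$-plaquettes. Integrating the resulting differential inequality from $\lambda$ to $\lambda+\epsilon$ gives the threshold. Your proposed route substitutes a substantially harder tool for a substantially easier one and then does not carry it out; there is no evidence that an OSSS revealment argument can be made to work for the global homological event $A$.

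\textbf{(2) Passing from $A$ to $S$.} Your argument that ``$\mu_p(A)\to 1$ implies every basis element lies in $\operatorname{im}(\phi_*)$ with high probability, hence $\mu(S)\to 1$ by a union bound'' does not work. First, $A=\{\operatorname{im}(\phi_*)\ne 0\}$ is strictly larger than $\bigcup_j A_j$ where $A_j$ is the event that the $j$-th basis class lies in the image, so high probability of $A$ does not even give high probability of the union. Second, even if it did, transitivity of the symmetry group only equalizes $\mu(A_j)$; it does not prevent $\mu\bigl(\bigcup_j A_j\bigr)$ from being close to $1$ while each $\mu(A_j)$ is only around $1/\binom{d}{i}$. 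The paper bridges this gap with Lemma~\ref{lemma:spinning}, which gives the quantitative inequality $\mu(S)\ge c\,\mu(A)^{c'}$ using both positive association and the irreducibility of the action of the torus symmetry group on $H_i(\T^d_N;\,\F)$ (and this is precisely where the hypothesis $\mathrm{char}(\F)\ne 2$ enters). A naive union bound or even the square-root trick alone is not sufficient; the subspace structure of $\operatorname{im}(\phi_*)$ and the irreducible representation are essential ingredients.

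\textbf{(3) Strict monotonicity in (b).} Your embedding $\T^{d-1}_N\hookrightarrow\T^d_N$ argument would give the non-strict inequality for Bernoulli plaquette percolation, but for the dependent random-cluster measure the restriction of $\mu_{\T^d_N}$ to a subcomplex is not the random-cluster measure on that subcomplex. The paper's proof requires Lemma~\ref{lemma:subcomplex} (stochastic domination between plaquette random-cluster models on nested complexes, proved via Holley's theorem and the conditional single-plaquette probabilities in Equation~\ref{eq:openprob}), and then a careful splitting of plaquette states into independent Bernoulli variables to obtain strictness, together with a second Holley comparison. Saying strictness ``should follow from the additional freedom'' papers over exactly the construction that makes the argument work. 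The duality (a) outline and the stochastic sandwich for $0<p_l\le p_u<1$ are on the right track, matching the paper's use of Theorem~\ref{theorem:duality}, Equation~\ref{eq:2}, and Corollary~\ref{cor:sharp}.
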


\subsection{Why does $q$ need to be prime?}
\label{sec:prime}

Our results for $q$-state Potts lattice gauge theory are for cases where the cohomology coefficients are a finite field $\F_q$ where $q$ is a prime integer, instead of the more general additive group $\Z_n.$ This is not just for convenience, though it is more convenient to work with homology and cohomology with field coefficients since the associated groups are then vector spaces. There is one key place where this assumption is necessary, as noticed by Aizenman and Fr\"olich~\cite{aizenman1984topological}. It arises trying to couple the plaquette random-cluster model with Potts lattice gauge theory. Recall that the definition of the random-cluster model involves the term $q^{\mathbf{b}_{i-1}\paren{P}},$ where $\mathbf{b}_{i-1}\paren{P}$ is dimension of the $\F$-vector space $H_{i-1}\paren{P;\,\F}$. If we were to use $\Z_n$ coefficients, we could try to replace $\mathbf{b}_{i-1}\paren{P}$ with the rank of the $\Z_n$-module $H_i\paren{P;\,\Z_n}$ (that is, the size of the minimal generating set). However, this does not lead to the same model as the marginal of the generalized Edwards--Sokal coupling defined in Section~\ref{sec:latticegauge}. The argument in Proposition~\ref{prop:coupling} breaks down because, in general, 
\[\abs{H_i\paren{P;\,\Z_n}} \neq n^{\mathrm{rank}\paren{H_i\paren{P;\,\Z_n}}}\,.\]
This could be resolved by defining a dependent plaquette percolation model so that 
\[\mathbb{P}\paren{P}\propto p^{\abs{P}}\paren{1-p}^{\abs{X^i} - \abs{P}}\abs{H_i\paren{P;\,\Z_n}}\,.\]
We expect some of our results to carry over to this model, but we defer them to a later paper.

 Aizenman and Fr\"olich identified a second issue, where they constructed a plaquette system in which there exist $(i-1)$-cycles $\gamma$ and $\gamma'$ so that $\brac{\gamma}$ is homologous to $q\brac{\gamma'}.$ Then an analogue of Theorem~\ref{thm:comparison} fails when $V_{\gamma}$ is defined to be the event that $\gamma$ is bounded by a surface of plaquettes. However, we resolve this by redefining $V_{\gamma}$ to depend on the choice of $q$; clearly $\brac{\gamma}=0$ in $H_{i-1}\paren{P;\;\Z_q}$ in this example.
 

Finally, in the proof of the homological percolation phase transition, Lemma~\ref{lemma:spinning} requires that the homology groups are vector spaces.

\subsection{Outline} We outline the remainder of the paper. First, in Section~\ref{sec:hom} reviews the definitions of homology and cohomology for cubical complexes. These are used in Section~\ref{sec:topdual} to show useful topological duality results. Next, we define the plaquette random-cluster model in Section~\ref{sec:RCM} and prove several of its basic properties. These include positive association (which was previously shown by Hiraoka and Shirai~\cite{hiraoka2016tutte}), the existence of the infinite volume limit, and duality. Section~\ref{sec:latticegauge} covers the relationship between the plaquette random-cluster model and Potts lattice gauge theory: the coupling between them, the definition of the plaquette Swendsen--Wang algorithm, and proofs of Theorems~\ref{thm:comparison} and~\ref{thm:areaperimeter}. Finally, we prove our results for homological percolation --- Theorems~\ref{thm:half}, \ref{thm:one}, and \ref{thm:weak} --- in Section~\ref{sec:homperc}.

\section{Homology and Cohomology}\label{sec:hom}

We give a brief review of homology and cohomology, two fundamental invariants studied in algebraic topology. These theories exhibit a number of dualities which are closely related to classical dualities between lattice spin models and --- as we will demonstrate below --- allow them to be generalized to higher dimensions. Though this paper is not self-contained with respect to topological background, we hope to provide enough context for a reader with minimal previous knowledge of the area to make sense of our results. More detail can be found in~\cite{hatcher2002algebraic} (the standard mathematical reference) or~\cite{druhl1982algebraic} which provides an exposition specific to the context of lattice spin models but does not cover all the material we need here. Those familiar with differential forms but not algebraic topology may find some of these concepts familiar, as they are important in a continuous version of the discrete theories developed here.

In this paper we will deal exclusively with spaces called cubical complexes, which are composed of $i$-dimensional plaquettes (unit $i$-dimensional cubes) for various $i.$ Two references with details specific to this setting are \cite{kaczynski2004computational,saveliev2016topology}. However, many of the definitions and results in this paper can easily be adapted to more general cell complexes. A first example of a cubical complex is the integer lattice cubical complex $\Z^d,$ which is a union of all $i$-plaquettes for $0 \leq i \leq d$ which have corners in the integer lattice. The $d$-dimensional torus also has a natural cubical complex structure obtained by dividing the cube $\brac{0,N}^d$ into unit cubes and identifying opposite faces of the original cube. We call this cubical complex $\T_N^d.$ All examples we will consider in this paper are subcomplexes of either $\Z^d$ or $\T_N^d.$

\subsection{Homology}

In order to convert vague geometric questions about ``the number of holes'' of a topological space into concrete algebraic quantities, homology theory defines a space  $C_i\paren{X;\,\mathcal{G}}$ of linear combinations with coefficients in an abelian group $\mathcal{G}$ (for a concrete example, take $\mathcal{G} = \Z$) of $i$-plaquettes called \emph{chains}, and \emph{boundary operators}, which map an $i$-plaquette to a sum of its $(i-1)$-faces. The continuous analogues of these concepts are, roughly speaking ``$i$-dimensional spaces on which one can integrate an $i$-form'' (called an $i$-current) and the geometric boundary of a space.

For reasons that become apparent below, it is important to give each term in the boundary of a plaquette a sign corresponding to the orientation of a plaquette. The formula is relatively simple in low dimensions. A zero-plaquette is a vertex and its boundary is zero. A $1$-plaquette is an edge $\paren{v_1,v_2}$ and its boundary is the difference $v_2-v_1.$ A two-plaquette is an oriented unit square with vertices $\paren{v_1,v_2,v_3,v_4}$ and its boundary is $\paren{v_1,v_2}+\paren{v_2,v_3}+\paren{v_3,v_4}-\paren{v_1,v_4}.$ More generally, let $1 \leq k_1<k_2<\ldots<k_i \leq d$ and let $I_j = \brac{0,1}$ for $j \in \set{k_1,k_2,\ldots,k_i}$ and $I_j = \set{0}$ for $j \in \brac{d} \setminus {k_1,k_2,\ldots,k_i}.$ Then $\sigma = \prod_{1 \leq j \leq d} I_j$ is an $i$-plaquette in $\R^d,$ and its boundary is given by
\[\partial \sigma = \sum_{l=0}^i \paren{-1}^{l-1} \paren{\prod_{1 \leq j < k_l} I_j \times \set{1} \times \prod_{k_l < m \leq d} I_m - \prod_{1 \leq j < k_l} I_j \times \set{0} \times \prod_{k_l < m \leq d} I_m}\,.\]
The reason that the sum is alternating in sign is so that the boundary operator satisfies the equation
\begin{equation}\label{eq:boundaryofboundary}
    \partial \circ \partial = 0\,.
\end{equation}

\begin{figure}[t]
    \centering
    \includegraphics[width=.5\textwidth]{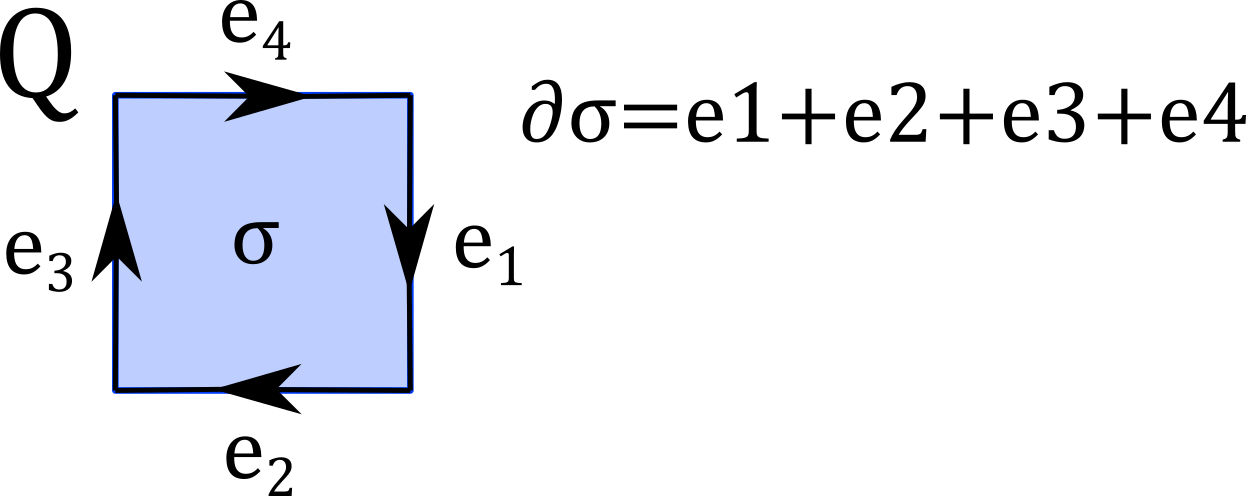}
    \caption{The boundary map for a two dimensional plaquette.}
    \label{fig:oneplaquette}
\end{figure}

Of particular interest are the chains $\alpha \in C_i\paren{X;\,\mathcal{G}}$ satisfying $\partial \alpha = 0.$ Such chains are called \emph{cycles}, the space of which is denoted $Z_i\paren{X;\,\mathcal{G}}.$ Equation~\ref{eq:boundaryofboundary} provides one source of $i$-cycles, namely the boundaries of $(i+1)$-plaquettes. We denote this space $B_{i}\paren{X;\,\mathcal{G}}.$ It turns out that the most interesting cycles are the ones that are not boundaries. 

To illustrate why this is the case, consider the unit square $Q = \brac{0,1}^2 \subset \R^2,$ seen in Figure~\ref{fig:oneplaquette}. In the cubical complex structure we defined earlier, we have that $Z_1\paren{Q;\,\mathcal{G}} \simeq \mathcal{G}$ consists of multiples of a single $1$-cycle, namely the boundary of $Q.$ But now imagine that we make our lattice spacing $1/2$ instead of $1.$ Now $Z_1\paren{Q;\,\mathcal{G}}$ contains linear combinations of the boundaries of the four $2$-plaquettes contained in $Q.$ One such combination can be seen in Figure~\ref{fig:fourplaquettes}. Though $Q$ has not changed, our choice of lattice has made the cycle space significantly more complicated.

\begin{figure}[t]
    \centering
    \includegraphics[width=.5\textwidth]{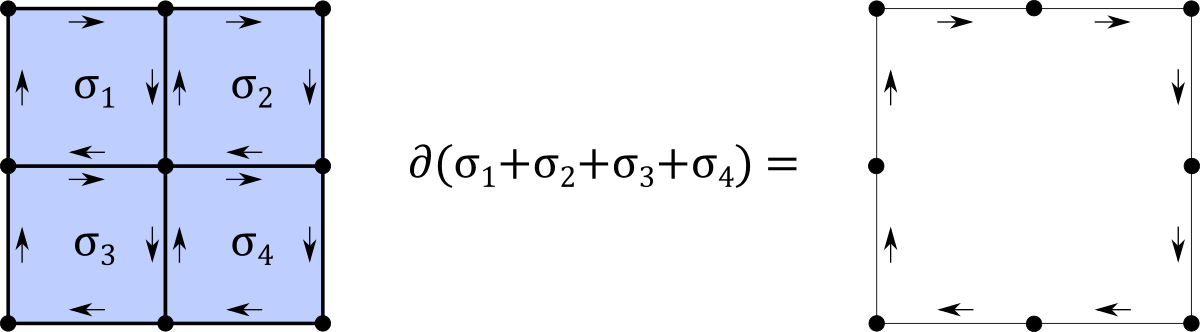}
    \caption{The boundary of a union of 4 plaquettes computed with the linearity of the boundary operator.}
    \label{fig:fourplaquettes}
\end{figure}

Therefore, in order to measure the shape of spaces in a way that does not depend on our choice of lattice, we define the \emph{$i$-dimensional homology group} as the quotient 
\[H_i\paren{X;\,\mathcal{G}} = Z_i\paren{X;\,\mathcal{G}}/B_i\paren{X;\,\mathcal{G}}\,.\] 
Returning to the unit square $Q,$ every $1$-cycle is a boundary regardless of the cubical complex structure on $Q$, so $H_1\paren{Q;\,\mathcal{G}} = 0.$ In general, it turns out that homology groups do not depend on the cubical complex structure of a space, and are invariant to continuous deformations of the space. In particular,
one can show that $H_1\paren{Q;\,\mathcal{G}} = 0$ by continuously deforming $Q$ to a point, which has no nonzero $1$-chains let alone $1$-cycles.

For an example with nontrivial first homology, consider $H_1\paren{\partial Q;\,\mathcal{G}},$ where $\partial Q$ is the topological boundary of $Q,$ i.e. the empty square formed by the union of the four sides of $Q.$ There is still the $1$-cycle from before but there are no $2$-plaquettes, so $H_1\paren{\partial Q;\,\mathcal{G}} \neq 0.$ This example illustrates the often repeated informal description of $H_i\paren{X}$ as a measurement of the number of $i$-dimensional holes of $X.$ In the case of $H_1,$ the cycles that remain are the loops that cannot be filled in by $2$-plaquettes.

We can now define giant cycles in a subcomplex of the torus. If $X \subset \T^d_N$ consists of a subset of the cubical cells of $\T^d_N,$ then the chain groups of $X$ are subgroups of the chain groups of $\T^d_N.$ The \emph{giant cycles} of $X$ are the cycles that are not boundaries when considered as chains in $\T^d_N.$ In the case $i=1,$ a giant cycle is a loop that spans the torus $\T^d_N$ in some sense, and must therefore have length at least $N.$ We interpret this as a finite volume analogue of an infinite path in the lattice. See Figure~\ref{fig:giantcycles} for illustrations of two giant cycles.

\begin{figure}
	\includegraphics[height=0.4\textwidth]{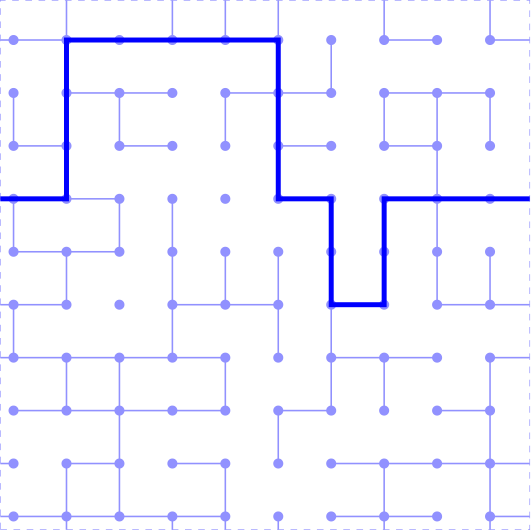}\qquad
	\includegraphics[height=0.45\textwidth]{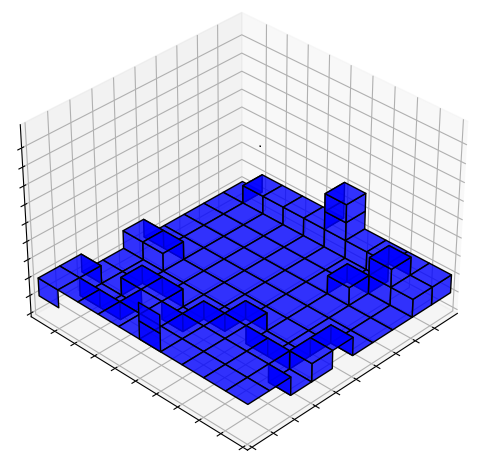}

    \caption{Examples of giant cycles: (left) a $1$-dimensional giant cycle in $\T^2$ and (right) a $2$-dimensional giant cycle in $\T^3,$ shown in cubes with periodic boundary conditions. Reproduced with permission from~\cite{duncan2020homological}.}
\label{fig:giantcycles}
\end{figure}

\subsection{Coefficients and Torsion}
So far, the choice of coefficient group $\mathcal{G}$ has not been important to our computations in this section, though we have mentioned their importance to the random-cluster model in the introduction. In this paper, we will almost exclusively work with homology over the finite field of integers mod $q$ for prime $q,$ denoted $\F_q.$ We will see later that this is the correct choice to study Potts lattice gauge theory, and it has the advantage of simplifying the algebra required since a homology group with field coefficients is a vector space.

For an example where different coefficients produce different results, consider the identifications of the sides of the $2$-plaquette in Figure~\ref{fig:torusklein}. In the first case we roll up the plaquette into a cylinder and then put the opposite ends of the cylinder together to form a torus $\T.$ In the second, we also form a cylinder but put the opposite ends of the cylinder together in the reverse orientation to form a Klein bottle $\mathbb{K}.$ Consider the second homology of both spaces. In both cases there are no $3$-plaquettes and only one $2$-plaquette $\sigma,$ so it suffices to check if the $2$-chains generated by $\sigma$ contain nontrivial cycles. In the torus, 
\[\partial \sigma = e_1 + e_2 - e_1 - e_2 = 0\,,\]
so $H_2\paren{\T;\,\F} \simeq \F$ for any field $\F.$ In the Klein bottle, 
\[\partial \sigma = e_1 + e_2 + e_1 - e_2 = 2e_1\,,\]
which is nonzero over most fields, with a number of exceptions including $\F_2.$ Thus, $H_2\paren{\mathbb{K};\,\F_2} \simeq \F_2,$ and $H_2\paren{\mathbb{K};\,\F_q} = 0$ for any prime $q \neq 2.$ There is also a difference in first homology; one can compute $H_1\paren{\mathbb{K};\,\F_2} \simeq \F_2 \oplus \F_2$ and $H_1\paren{\mathbb{K};\,\F_q} \simeq \F_q$ for $q \neq 2.$ Of course, there are no single plaquettes with sides identified this way in the integer lattice, but non-orientable surfaces such as the Klein bottle appear as unions of plaquettes in dimensions $4$ and higher. Homology that appears over some finite fields but not others is sometimes called \emph{torsion}.

\begin{figure}[ht]
    \centering
    \includegraphics[width=.5\textwidth]{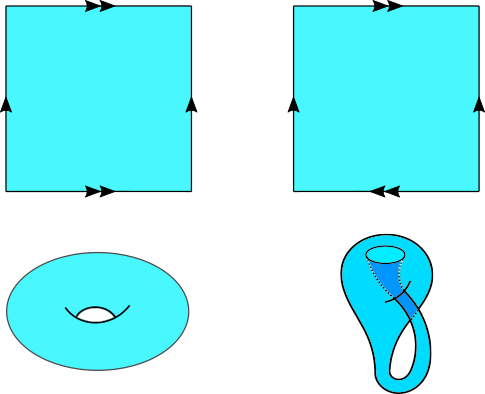}
    \caption{A torus $\T$ and a Klein bottle $\mathbb{K},$ the resulting shapes of two possible identifications of the opposite sides of a plaquette. The left plaquette is a cycle with any field coefficients and the right is only a cycle with $\F_2$ coefficients.}
    \label{fig:torusklein}
\end{figure}

It is worth mentioning a special situation that arises for the $0$-th homology group. It is not hard to check that $H_0\paren{X;\,\F} = \F^{{c_X}},$ where $c_X$ is the number of connected components of $X.$ Some standard results in algebraic topology have simpler statements when  all of the homology groups of a space consisting of a single point are zero. This is not true in the setup that we have described so far, since there is one connected component. To fix this, define the \emph{reduced homology groups} 
\[\Tilde{H}_k\paren{X;\,\F} = \begin{cases}
H_k\paren{X;\,\F} & k \geq 1\\
\F^{c_X-1}  & k = 0
\end{cases}\,.\]

\subsection{The  Euler--Poincar\'{e} formula} The  Euler--Poincar\'{e} formula is an important result that relates the Euler characteristic of a cubical complex to the ranks of its homology groups with coefficients in any field. Define the $i$-th Betti number of a cubical complex $X$ with coefficients in $\F$ by 
\[\mathbf{b}_i\paren{X} = \mathbf{b}_i\paren{X;\,\F} \coloneqq \dim H_i\paren{X;\,\F}.\]
Also, let $X^i$ be the set of $i$-plaquettes of $X$ and let $\abs{X^i}$ be their number. The Euler characteristic of $X$ is then
\[\chi\paren{X}=\sum_{i=0}^{\mathrm{dim}\paren{X}}\paren{-1}^{i}\abs{X^i}\,.\]
The following theorem is a straightforward consequence of the rank--nullity theorem.
\begin{Theorem}[Euler--Poincar\'{e} formula]
For any cubical complex $X$ and any field $\F$
\[\chi\paren{X}=\sum_{i=0}^{\mathrm{dim}\paren{X}}\paren{-1}^{i}\mathbf{b}_i\paren{X;\,\F} \,.\]
\end{Theorem}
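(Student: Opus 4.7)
The plan is to work entirely at the level of the chain complex of finite-dimensional $\F$-vector spaces. Over a field, each $C_i\paren{X;\,\F}$ has dimension exactly $\abs{X^i}$, since the $i$-plaquettes form a basis, so the alternating sum of $\abs{X^i}$ in the definition of $\chi\paren{X}$ equals the alternating sum of $\dim C_i\paren{X;\,\F}$. The goal is to rewrite this alternating sum of dimensions as $\sum_i \paren{-1}^i \mathbf{b}_i\paren{X;\,\F}$ using only the rank--nullity theorem applied to each boundary map $\partial_i \colon C_i\paren{X;\,\F} \to C_{i-1}\paren{X;\,\F}$.

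First I would apply rank--nullity to $\partial_i$ to obtain $\dim C_i = \dim Z_i + \dim \im\paren{\partial_i}$, where $Z_i = \ker \partial_i$. The identity $\partial \circ \partial = 0$ from \eqref{eq:boundaryofboundary} forces $\im\paren{\partial_{i+1}} \subseteq Z_i$, and by definition $B_i\paren{X;\,\F} = \im\paren{\partial_{i+1}}$. The short exact sequence $0 \to B_i \to Z_i \to H_i \to 0$ then yields $\mathbf{b}_i = \dim Z_i - \dim B_i$.

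Substituting $\dim C_i = \dim Z_i + \dim B_{i-1}$, with the conventions $B_{-1} = 0$ and $B_{\dim X} = 0$ (the latter because there are no $(\dim X + 1)$-plaquettes in $X$), into $\chi\paren{X} = \sum_i \paren{-1}^i \dim C_i$ and reindexing the $B_{i-1}$-terms by $i \mapsto i+1$ produces
\[\chi\paren{X} = \sum_{i=0}^{\dim X} \paren{-1}^i \dim Z_i - \sum_{i=0}^{\dim X} \paren{-1}^i \dim B_i = \sum_{i=0}^{\dim X} \paren{-1}^i \mathbf{b}_i\paren{X;\,\F},\]
which is the claim.

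There is no serious obstacle; the entire argument is a few lines of linear algebra over $\F$, with no appeal to the topology of $X$ beyond its chain complex. The only point requiring care is the bookkeeping at the boundary indices $i = 0$ and $i = \dim X$, which is handled by the zero conventions above. It is worth emphasizing that the identity holds for every coefficient field, even though individual Betti numbers can depend on $\F$, as the torus-versus-Klein-bottle example earlier in the section illustrates; the alternating sum always collapses to the combinatorial Euler characteristic $\chi\paren{X}$.
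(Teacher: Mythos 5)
Your proof is correct and is exactly the argument the paper has in mind when it says the theorem is ``a straightforward consequence of the rank--nullity theorem''; the paper supplies no further detail, and your telescoping of $\dim C_i = \dim Z_i + \dim B_{i-1}$ together with $\mathbf{b}_i = \dim Z_i - \dim B_i$ is the standard way to spell that out.
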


\subsection{Cohomology}

A small algebraic change in the definitions that give the homology groups leads to cohomology. The $i$-th \emph{cochain group} $C^i\paren{X;\,\F}$ is defined as the group of $\F$-linear functions from $C_i\paren{X;\,\F}$ to $\F.$ Since we are using field coefficients, this is the dual vector space to $C_i\paren{X;\,\F}.$ Then the \emph{coboundary operator} $\delta : C^i\paren{X;\,\F} \to C^{i+1}\paren{X;\,\F}$ is defined by 
\[\delta f\paren{\alpha} = f\paren{\partial \alpha}\]
for $f\in C^i\paren{X;\,\F}, \alpha \in C_{i+1}\paren{X;\,\F}.$ The continuous analogues of an $i$-cochain and and its coboundary are a differential form and its exterior derivative. Note that if $f$ is an $i$-cochain and $\alpha$ is an $i$-chain, the evaluation  $f\paren{\alpha}$ corresponds to integration and the definition of the coboundary operator corresponds to Stokes' Theorem.

Analogously to homology groups, we define the $i$-th cocycle group $Z^i\paren{X;\,\F}$ to be the kernel of $\delta$ and the $i$-th coboundary group $B^i\paren{X;\,\F}$ to be the image of $\delta$ applied to $C^{i+1}\paren{X;\,F}.$ Then $i$-th cohomology group is the quotient \[H^i\paren{X;\,\F} = Z^i\paren{X;\,\F} / B^i\paren{X;\,\F}\,.\]
In the continuous setting, the analogue of the $i$-dimensional cohomology group is the $i$-dimensional de Rham cohomology of closed forms modulo exact forms. This is more than just as analogy: de Rham's theorem states that the de Rham cohomology of a smooth manifold is isomorphic to  $H^i\paren{M,\R}$ computed using a cubical complex structure on $M$~\cite{bredon2013topology}.

In some ways cohomology does not add more information than homology already provided. The universal coefficient theorem for cohomology tells us that $H_i\paren{X;\,\F} \simeq H^i\paren{X;\,\F}$ when $\F$ is a field (see Corollary 3.3 of~\cite{hatcher2002algebraic}). However, this algebraic equivalence does not preclude problems lending themselves more naturally to one perspective or the other. For example, it is arguably more intuitive to think of giant cycles as homological in nature, due to their interpretation as giant ``hypersurfaces'' spanning the surface. On the other hand, we argue that lattice gauge theory should be thought of as a random cochain and Wilson loop variables as cohomological quantities. It is also worth noting that although the homology and cohomology groups themselves may be isomorphic, there is a relationship between the different dimensional cohomology groups in the form of a multiplication of cocycles called the cup product that does not always have a homological analogue (see Section 3.2 of~\cite{hatcher2002algebraic}). 

Homology and cohomology come together in global duality theorems, of which there are many versions. We primarily use variations of Alexander duality, the original form of which says that for a sufficiently ``nice'' subspace $X \subset S^d,$
\[\Tilde{H}_i\paren{X;\,\F} \simeq \tilde{H}^{d-i-1}\paren{S^d\setminus{X};\,\F}\]
for $1\leq i \leq d-1.$ In the simplest case, this says that the number of bounded components of a ``nice'' subset of the plane equals the number of ``minimal contours'' of its complement. Since the complement of a set of plaquettes is a thickening of the dual system (as shown in Proposition~\ref{prop:deformationretract} below), we are able to use techniques related to the proof of Alexander duality to relate both the giant cycles and local cycles of the two sets (Theorem~\ref{thm:alexander} below). This relationship is quantified in terms of the dimension of the relevant subspaces of the homology groups, and is a main ingredient in the proof that the dual complex is close to a random-cluster model.

We will now briefly discuss the relevant information about the topology of the torus specifically. Although in percolation theory the torus is most often thought of as a cube with periodic boundary conditions, it is useful topologically to think of it as the product of $d$ copies of the circle $S^1.$ In such a product space, the K\"{u}nneth formula for homology (Section 3B of~\cite{hatcher2002algebraic}) tells us that there is an isomorphism of the form
\[\bigoplus_{i+j=k} H_i\paren{X;\,\F} \times H_j\paren{Y;\,\F} \simeq H_k\paren{X \times Y;\,\F}\,.\]
Since $H_1\paren{S^1;\,\F} \simeq H_0\paren{S^1;\,\F}\simeq \F$ and $H_i\paren{S^1;\,\F} = 0$ for all $i \geq 2,$ we see that 
\[H_i\paren{\T;\,\F} \simeq \F^{\binom{d}{i}}\,.\]
Furthermore, there is a set of generating $i$-cycles consisting of the products of $i$ of the $d$ possible $S^1$ factors.

\section{Topology and Duality}
\label{sec:topdual}

Before defining the plaquette random-cluster model, we will prove some useful duality results that are true for any cubical complex on the torus. Some of these results are from our previous paper on the Bernoulli plaquette model. The dual of the cubical complex $\T^d_N$ is the complex $\paren{\T^d_N}^{\bullet}$ obtained by shifting by $1/2$ in each coordinate direction, and the dual $\paren{\Z^d}^\bullet$ of $\Z^d$ is defined similarly. These pairs of dual complexes have the property that each $i$-plaquette intersects exactly one $(d-i)$-plaquette of $\paren{\T^d_N}^{\bullet},$ giving a matching of plaquettes and dual plaquettes. As in classical bond percolation in the plane, this induces a matching between subcomplexes of each. Namely, given a cubical complex $P = P\paren{\omega},$ we define the dual complex $P^{\bullet}$ to be the union of all plaquettes for which the dual plaquette is not included in $\omega.$ When $P$ comes from the plaquette random-cluster model defined below, it will include the union of all lower dimensional plaquettes of $\T^d_N$ (called the ``$(i-1)$-skeleton'') and no plaquettes of dimension higher than $i.$ Note that in that case, $P^{\bullet}$ automatically contains the  $(d-i-1)$-skeleton of $\paren{\T^d_N}^{\bullet}.$ Note that each statement in this section has an analogue for subcomplexes of a box in $\Z^d$ with free boundary, where the dual complex is a subcomplex of a box with wired boundary.

As mentioned in our review of homology and cohomology, our use of topological duality relies on decomposing the torus into a union of disjoint subspaces. The plaquette system and its its dual are not complementary, but the dual system deformation retracts (collapses) onto the complement of the plaquette system. By standard results in algebraic topology, this implies that the dual has the same topological invariants as the complement (see Chapter 0 of~\cite{hatcher2002algebraic}). 

\begin{Proposition}[Duncan, Kahle, and Schweinhart]\label{prop:deformationretract}
For any cubical complex $P \subseteq \T^d_N,$ $\T^d\setminus P$ deformation retracts to $P^{\bullet}.$ 
\end{Proposition}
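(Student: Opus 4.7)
The plan is to construct an explicit deformation retract piecewise on each closed dual $d$-cube $v^\bullet$ and verify compatibility across shared faces, using the fact that the cubes $\set{v^\bullet : v \text{ a vertex of } \T^d_N}$ tile $\T^d$ with neighboring pairs meeting only along a single $(d-1)$-face of $\paren{\T^d_N}^\bullet$.

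Inside each $v^\bullet$, the crucial geometric observation is that every cell of the subcomplex $P$ which meets $v^\bullet$ must be incident to $v$ (otherwise a vertex of that cell would lie at $\ell^\infty$-distance at least $1$ from $v$, outside $v^\bullet$). Consequently, if $v \notin P$ then $v^\bullet \cap P = \emptyset$ and $v^\bullet$ sits entirely inside $P^\bullet$ (the $d$-cell $v^\bullet$ of $\paren{\T^d_N}^\bullet$ is in $P^\bullet$ because its dual $v$ is not in $P$); the retract is the identity on this cube. If $v \in P$, then $P \cap v^\bullet$ is the straight-line cone from $v$ onto $P \cap \partial v^\bullet$, since each cell of $P$ incident to $v$ restricts to $v^\bullet$ as such a cone with apex $v$. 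I would then retract $v^\bullet \setminus P$ onto $v^\bullet \cap P^\bullet$ in two stages: first, radially project outward from $v$ onto $\partial v^\bullet \setminus P$ (well-defined because the ray from $v$ through any $x \notin P$ leaves $P$ immediately thanks to the cone structure); second, retract $\partial v^\bullet \setminus P$ onto $\partial v^\bullet \cap P^\bullet$ by an inductive application of the proposition to the cubical $(d-1)$-sphere $\partial v^\bullet$, with $P \cap \partial v^\bullet$ playing the role of $P$. Here one uses that $\partial v^\bullet \cap P^\bullet$ is precisely the subcomplex dual to $P \cap \partial v^\bullet$ inside $\partial v^\bullet$, since an open dual cell $\tau \subset \partial v^\bullet$ lies in $P^\bullet$ iff its matched cell $\tau^\bullet$ (incident to $v$) is not in $P$.

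To glue the local retracts, I would verify: on a shared $(d-1)$-face $\tau$ of $v^\bullet$ and $w^\bullet$ dual to the edge $e = \set{v,w}$, if $e \notin P$ then $\tau \subseteq P^\bullet$ and each local retract fixes $\tau$ pointwise, while if $e \in P$ then $\tau \cap P$ is a cone from the midpoint of $e$ (viewed as the center of $\tau$) and the two cone structures agree on $\tau$, pushing $\tau \setminus P$ consistently into $\partial \tau$. The main obstacle is the inductive step on $\partial v^\bullet$: one must set up the correct analogue of the statement on a cubical sphere rather than the torus, with a clean base case (e.g.\ $d=1$, where $\partial v^\bullet$ is two points and the assertion is immediate). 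An alternative that avoids induction is to construct the second retract directly via the gradient flow of a piecewise-linear function which vanishes on $P \cap \partial v^\bullet$, equals $1$ on $\partial v^\bullet \cap P^\bullet$, and has no critical points in between; this trades the induction for explicit cell-level bookkeeping of comparable complexity.
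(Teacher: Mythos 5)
The paper itself does not prove this proposition; it imports it from the earlier work \cite{duncan2020homological} on Bernoulli plaquette percolation, so there is no internal proof to compare against. Your proposal is morally correct and, I believe, close in spirit to the cited argument. The crucial geometric observation --- a primal cell meets $v^\bullet$ only if it is incident to $v$, so $P\cap v^\bullet$ is a finite union of boxes with common corner $v$ and hence star-shaped from $v$ --- is exactly right, and the resulting radial deformation retraction of $v^\bullet\setminus P$ onto $\partial v^\bullet\setminus P$ is the correct first move. The case distinction on whether $v\in P$, the identification of $v^\bullet\cap P^\bullet$ with $\partial v^\bullet\cap P^\bullet$ when $v\in P$, and the gluing discussion on shared faces are all sound.

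The gap you flag (having to set up a separate induction on cubical $(d-1)$-spheres) can be avoided entirely, and this is worth knowing. The star-shaped observation is not special to $d$-cells: for \emph{any} dual $k$-cell $\tau$, if $\tau^\bullet\in P$ then every primal cell meeting $\tau$ has $\tau^\bullet$ as a face and intersects $\tau$ in a convex box containing the common center $c_\tau=\tau\cap\tau^\bullet$, so $P\cap\tau$ is closed and star-shaped from $c_\tau$; and if $\tau^\bullet\notin P$ (equivalently $\tau\in P^\bullet$) then $P\cap\tau=\emptyset$. With this in hand you can replace the nested recursion by a flat iteration: for $k=d,d-1,\ldots,1$ in order, simultaneously radially retract $\mathrm{int}(\tau)\setminus P$ onto $\partial\tau\setminus P$ from $c_\tau$ for every dual $k$-cell $\tau\notin P^\bullet$, fixing everything else. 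Each stage is a deformation retraction; compatibility across shared faces is automatic because every radial map fixes $\partial\tau$ pointwise, which also dispenses with the gluing verification in your last paragraph. After the $k=1$ stage the open dual cells removed are exactly those with $\tau^\bullet\in P$, and what remains is precisely $P^\bullet$ (using also $P\cap P^\bullet=\emptyset$, which follows because the minimal dual cell containing a point $x$ has a matched primal cell that is a face of the minimal primal cell containing $x$). This packaging makes the argument self-contained: the ``inductive hypothesis'' becomes the one-dimension-at-a-time star-shaped lemma, and no separate statement about cubical spheres is needed. Your alternative via a piecewise-linear Morse function would also work but is indeed more bookkeeping than the flat radial collapse.
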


Duality allows us to relates the homology of  $P$ with that of $P^{\bullet}.$ Importantly, ``global'' cycles and ``local'' cycles behave differently. Let $\phi_*:H_k\paren{P;\,\F}\rightarrow H_k\paren{\T^d;\,\F}$ and  $\psi_*:H_k\paren{P^{\bullet};\,\F}\rightarrow H_k\paren{\T^d;\,\F}$ be the maps on homology induced by the respective inclusions. Set $a_k = \dim \ker \phi_{k*}, a_k^\bullet = \dim \ker \psi_{k*}, b_k = \rank \phi_{k*},$ and $b_k^\bullet = \rank \psi_{k*}.$ We think of $a_k$ as counting the \emph{local cycles} (cycles of the plaquette system that are boundaries in the ambient torus) and $b_k$ as counting the \emph{global cycles}. Furthermore, recall that $\mathbf{b}_k = \dim H_k\paren{P}$ and let $\mathbf{b}^{\bullet}_k = H_{k}\paren{P^{\bullet}}.$ 

The following is a special case of the principle of Alexander duality, which is that the topology of a ``nice'' subset of a manifold should be related to that of its complement. Traditionally, Alexander duality is stated for a subset of the sphere $S^{d}$ or of Euclidean space $\R^d$, where there are no global cycles. The proof of the following relations for the torus makes use of the same techniques in the usual proof for those cases. These techniques, which include long exact sequences and relative homology, are beyond the level of the brief introduction in Section~\ref{sec:hom}. We refer an interested reader to~\cite{hatcher2002algebraic} for background. 

\begin{Theorem}[Alexander Duality]\label{thm:alexander}
For any cubical complex $P\subseteq \T^d_N$ and any $1\leq k\leq d-1$ the following relations hold.
\begin{align}\label{eq:1}
a_k + b_k = \mathbf{b}_k,\quad a^{\bullet}_k + b^{\bullet}_k = \mathbf{b}^{\bullet}_k\,,
\end{align}
\begin{align}
\label{eq:2}
b_k+b_k^\bullet=\rank H_k\paren{\T^d}=\binom{k}{i}\,,
\end{align}
and,
\begin{align}\label{eq:3}
a_k = a_{d-k-1}^\bullet\,.
\end{align}
\end{Theorem}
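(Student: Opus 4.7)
My plan is to separate the three identities, noting that the first is routine linear algebra and the other two require a careful invocation of Lefschetz/Alexander-style duality for the pair $(\T^d_N, P)$. The essential tools are (i) the long exact sequence of that pair, (ii) Lefschetz duality $H_{k+1}(\T^d_N, P) \cong H^{d-k-1}(\T^d_N \setminus P)$, (iii) the deformation retract of Proposition~\ref{prop:deformationretract} together with universal coefficients over a field, which collectively give $H_{k+1}(\T^d_N, P) \cong H_{d-k-1}(P^\bullet)$, and (iv) Poincar\'e duality on the closed orientable manifold $\T^d_N$, which supplies $\dim H_k(\T^d_N;\F) = \binom{d}{k}$.

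Identity \eqref{eq:1} is the rank--nullity theorem applied to the $\F$-linear maps $\phi_{k*}$ and $\psi_{k*}$ whose domains have dimensions $\mathbf{b}_k$ and $\mathbf{b}_k^{\bullet}$ respectively. For \eqref{eq:3}, I would compute $a_k = \dim \ker \phi_{k*}$ through the piece
\[
H_{k+1}(P) \to H_{k+1}(\T^d_N) \xrightarrow{j_*} H_{k+1}(\T^d_N, P) \xrightarrow{\partial} H_k(P) \xrightarrow{\phi_{k*}} H_k(\T^d_N)
\]
of the long exact sequence. Exactness gives $a_k = \dim \mathrm{im}\,\partial = \dim H_{k+1}(\T^d_N,P) - \dim \mathrm{im}\,j_*$, and then $\dim \mathrm{im}\,j_* = \binom{d}{k+1} - b_{k+1}$ by factoring through $H_{k+1}(\T^d_N)/\mathrm{im}\,\phi_{(k+1)*}$. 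Combined with $\dim H_{k+1}(\T^d_N,P) = \mathbf{b}_{d-k-1}^{\bullet}$ from the duality chain above, this yields $a_k$ purely in terms of torus, $P$, and $P^\bullet$ data, and \eqref{eq:1} for the dual complex in dimension $d-k-1$ then converts it into $a_{d-k-1}^{\bullet}$.

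For \eqref{eq:2}, I would argue that $\mathrm{im}\,\phi_{k*}$ and $\mathrm{im}\,\psi_{k*}$ are complementary subspaces of $H_k(\T^d_N;\F)$ under the intersection pairing with Poincar\'e duality. Concretely, a generating $k$-cycle of $\T^d_N$ (a product of $k$ of the canonical circle factors) is supported either in $P$ or transversely intersected only by cells of $P^\bullet$, so each standard generator is realized in exactly one of the two images; dually, the intersection pairing identifies $\mathrm{im}\,\psi_{k*}$ with the annihilator of $\mathrm{im}\,\phi_{(d-k)*}$, and a short chase shows the two images have trivial intersection and span all of $H_k(\T^d_N;\F)$. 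The main obstacle will be this transversality/complementarity step: one needs to verify carefully that the Poincar\'e dual of an image class supported on $P$ can be represented in $P^\bullet$ (and vice versa), using the cellular structure of the dual lattice. Once \eqref{eq:2} is in hand, it feeds back into the computation above to close the loop giving \eqref{eq:3}, and all three identities are established simultaneously.
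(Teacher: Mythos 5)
Your proof of Equation~\eqref{eq:1} matches the paper. For Equation~\eqref{eq:3} you take a different route: the paper runs the long exact sequence of the pair $\paren{\T^d_N, \T^d_N\setminus P^{\bullet}}$ and closes by invoking $H_{i+1}\paren{P}=0$, a fact specific to $P$ being $i$-dimensional, whereas you run the sequence of $\paren{\T^d_N, P}$, use the chain $H_{k+1}\paren{\T^d_N,P}\cong H^{d-k-1}\paren{\T^d_N\setminus P}\cong H_{d-k-1}\paren{P^{\bullet}}$, and close by feeding Equation~\eqref{eq:2} back in at level $k+1$. That is a legitimate and arguably cleaner reorganization. The cost is that \eqref{eq:3} now rests entirely on \eqref{eq:2}, which the paper handles by citation to Lemma~10 of~\cite{duncan2020homological}, whereas you attempt a sketch.

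The sketch of Equation~\eqref{eq:2} has a genuine gap. The assertion that $\mathrm{im}\,\phi_{k*}$ and $\mathrm{im}\,\psi_{k*}$ are complementary subspaces of $H_k\paren{\T^d_N;\,\F}$, and that ``a short chase shows the two images have trivial intersection and span all of $H_k$,'' is false. Take $d=4$, $P$ the full $2$-skeleton, $k=1$: then $P^{\bullet}$ is the full $1$-skeleton of the dual, both inclusions surject on $H_1$, so $\mathrm{im}\,\phi_{1*}=\mathrm{im}\,\psi_{1*}=H_1\paren{\T^4_N}\cong\F^4$, and the dimensions sum to $8$ rather than $\binom{4}{1}=4$. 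The duality does not pair $\mathrm{im}\,\phi_{k*}$ with $\mathrm{im}\,\psi_{k*}$ in the same degree; rather, $\mathrm{im}\,\phi_{k*}\subset H_k\paren{\T^d_N}$ and $\mathrm{im}\,\psi_{(d-k)*}\subset H_{d-k}\paren{\T^d_N}$ are exact annihilators under the perfect intersection pairing $H_k\times H_{d-k}\to\F$, because $\mathrm{im}\,\phi_{k*}=\ker\bigl(H_k\paren{\T^d_N}\to H_k\paren{\T^d_N,P}\bigr)$ and Lefschetz duality converts that connecting map into the restriction $\psi^*_{d-k}\colon H^{d-k}\paren{\T^d_N}\to H^{d-k}\paren{P^{\bullet}}$. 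This yields $b_k+b^{\bullet}_{d-k}=\binom{d}{k}$, which is the relation the calculation in Theorem~\ref{theorem:duality} actually uses; it is also the one your derivation of \eqref{eq:3} needs, since ``\eqref{eq:1} for the dual complex in dimension $d-k-1$ converts it into $a^{\bullet}_{d-k-1}$'' goes through only when $\binom{d}{k+1}-b_{k+1}=b^{\bullet}_{d-k-1}$. Your own ``dually, the intersection pairing identifies $\mathrm{im}\,\psi_{k*}$ with the annihilator of $\mathrm{im}\,\phi_{(d-k)*}$'' is the right statement; the argument should be built around that, and the ``complementary subspaces of $H_k$'' framing --- together with the claim that each standard generator is realized in exactly one of the two images --- should be dropped.
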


\begin{proof}
The first two equations are an immediate consequence of the rank--nullity theorem, and the third is Lemma~10 of~\cite{duncan2020homological}. To prove the fourth, we require additional results from algebraic topology. Theorem 3.44 of~\cite{hatcher2002algebraic} gives the isomorphism

\begin{align*}
    H^{d-i-1}\paren{P} \cong H_{i+1}\paren{\T^d_N,\T^d_N \setminus P}\,,
\end{align*}
where $H_i\paren{X,Y}$ denotes the relative homology group of $X$ with respect to a subset $Y.$ Combining this with the long exact sequence of relative homology (see Section 2.1 of~\cite{hatcher2002algebraic}), we obtain the following commutative diagram:

\[\begin{tikzcd}[cramped, sep = small]
H_{i+1}\paren{\T^d_N} \arrow[swap]{d}{\cong} \arrow{r}{\varphi} & H_{i+1}\paren{\T^d_N,\T^d_N \setminus P^{\bullet}} \arrow[swap]{d}{\cong} \arrow{r}{\chi} & H_{i}\paren{\T^d_N \setminus P^{\bullet}} \arrow{r}{\epsilon} & H_i\paren{\T^d_N}\\
H^{d-i-1}\paren{\T^d_N} \arrow{r}& H^{d-i-1}\paren{P^{\bullet}}
\end{tikzcd}
\,.\]

By Proposition~\ref{prop:deformationretract} and the definition of the plaquette system, 

\begin{align*}
    H_{i+1}\paren{\T^d_N \setminus P^{\bullet}} \cong H_{i+1}\paren{P} \cong 0\,,
\end{align*}

so $\varphi$ is surjective. Similarly,

\begin{align*}
    H_{i}\paren{\T^d_N \setminus P^{\bullet}} \cong H_{i}\paren{P}\,.
\end{align*}

Then since $\epsilon$ is the map on homology induced by the inclusion $\paren{\T^d_N \setminus P^{\bullet}} \hookrightarrow \T^d_N,$ its image is isomorphic to the space of giant cycles of $P.$ Thus, $\chi$ restricts to an isomorphism between vector spaces of dimension $\mathbf{b}^{\bullet}_{d-i-1} - b^{\bullet}_{d-i-1}$ and $\mathbf{b}_i - b_i$ respectively, and Equation~\ref{eq:3} follows from Equation~\ref{eq:1}. 
\end{proof}

At first glance, the dimensions in Equation~\ref{eq:3} may seem off: the plaquette random-cluster model is weighted by the $i$-th Betti number and is dual to a $(d-i)$-dimensional model, but this equation relates its $i$-th Betti number to the $(d-i-1)$-st Betti number of its dual. However, the random-cluster model always contains all cells of dimension lower than $j<i$ (that is, the $(i-1)$-skeleton). The presence or absence of $i$-cells can only affect the Betti numbers in dimensions $i$ and $(i-1),$ so these are the only variable Betti numbers in the random-cluster model. This, together with the Euler--Poincar\'{e} formula implies the following proposition.
\begin{Proposition}
\label{prop:c6}
There is a constant  $\newconstant\label{const:5} = \oldconstant{const:5}\paren{N,i,d}$ so that for any $i$-dimensional subcomplex $P$ of $\T^d_N$ containing the full $(i-1)$-skeleton
\begin{align}\label{eq:5}
    \mathbf{b}_i -\mathbf{b}_{i-1} =  \eta\paren{P} + \oldconstant{const:5}\,,
\end{align} 
where $\eta\paren{P}$ denotes the number of $i$-cells of $P.$ 
\end{Proposition}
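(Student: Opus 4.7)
The plan is to compare two expressions for the Euler characteristic $\chi\paren{P}$: a direct count of cells, and the alternating sum of Betti numbers given by the Euler--Poincar\'{e} formula. After isolating the quantities that depend on $P,$ what remains will be the desired identity.

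First I would count cells directly. Since $P$ contains the full $(i-1)$-skeleton of $\T^d_N$ and has no cells of dimension greater than $i,$
\[\chi\paren{P} = \sum_{j=0}^{i-1} \paren{-1}^j \abs{\paren{\T^d_N}^j} + \paren{-1}^i \eta\paren{P}\,,\]
in which only the final term depends on $P;$ the finite sum is a function of $N, i, d$ alone.

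Next I would identify which Betti numbers of $P$ actually depend on $P.$ The group $H_j\paren{P}$ is determined by the chain complex in degrees $j-1, j, j+1,$ so for $j \le i-2$ it depends only on plaquettes of dimension at most $i-1,$ all of which are present in $P.$ Therefore $\mathbf{b}_j\paren{P}$ coincides with the $j$-th Betti number of the $(i-1)$-skeleton of $\T^d_N$ for every $j\le i-2,$ independent of the choice of $i$-cells, while only $\mathbf{b}_{i-1}\paren{P}$ and $\mathbf{b}_i\paren{P}$ are affected by the image and kernel of the boundary map $\partial_i$ and can vary with $P.$ The Euler--Poincar\'{e} formula then yields
\[\chi\paren{P} = C_0 + \paren{-1}^{i-1}\mathbf{b}_{i-1}\paren{P} + \paren{-1}^i \mathbf{b}_i\paren{P}\,,\]
where $C_0 = C_0\paren{N,i,d}$ is the constant accumulated from the stable low-degree Betti numbers.

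Equating this with the cell count and dividing through by $\paren{-1}^i$ isolates $\mathbf{b}_i - \mathbf{b}_{i-1} = \eta\paren{P} + \oldconstant{const:5}$ with an explicit expression for $\oldconstant{const:5}\paren{N,i,d}$ in terms of the face counts of $\T^d_N$ and the Betti numbers of its $(i-1)$-skeleton. There is no substantial obstacle; the only point that requires any care is justifying that $\mathbf{b}_j\paren{P}$ is constant for $j \le i-2,$ which amounts to observing that the chain complex of $P$ coincides with that of the $(i-1)$-skeleton of $\T^d_N$ in degrees up through $i-1.$
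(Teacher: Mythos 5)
Your proposal is correct and matches the paper's own argument: both compute $\chi(P)$ once by counting cells (all lower-dimensional faces plus $\eta(P)$) and once via the Euler--Poincar\'{e} formula, then observe that the face counts $\abs{(\T^d_N)^j}$ for $j\le i-1$ and the Betti numbers $\mathbf{b}_j(P)$ for $j\le i-2$ are constants depending only on $N,i,d,$ leaving exactly the claimed relation. If anything your write-up is slightly more careful with the $(-1)^i$ sign on $\eta(P)$ and with the justification that the low-degree chain complex of $P$ is fixed.
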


\begin{proof}
    By the  Euler--Poincar\'{e} formula
\begin{align}
\sum_{j=0}^d \paren{-1}^j \mathbf{b}_j = \sum_{j=0}^{i-1} \paren{-1}^j \abs{F_N^j} + \eta\paren{\omega}
\end{align}
and the desired statement follows from the observations that $\abs{F_N^j}$ only depends on $N$ and $j$ for $1 \leq j \leq i-1,$ and that $\mathbf{b}_j$ also only depends on $N$ and $j$ when $1 \leq j \leq i-2.$
\end{proof}

\section{The Plaquette Random-Cluster Model}
\label{sec:RCM}
In this section we will investigate the properties of the plaquette random-cluster model. For the reader's convenience, we recall that the $i$-dimensional plaquette random-cluster model has the distribution defined by
\begin{align*}
    \mu_{X}\paren{P} = \mu_{X,p,q,i}\paren{P} \coloneqq \frac{1}{Z}p^{\eta\paren{P}}\paren{1-p}^{\abs{X^i} - \eta\paren{P}}q^{\mathbf{b}_{i-1}\paren{P}}\,,
\end{align*}

where $Z=Z\paren{X,p,q,i}$ is a normalizing constant~\cite{hiraoka2016tutte}.

Similarly to classical percolation, the random complex can alternatively be viewed as a a function designating plaquettes as open or closed. Certain probabilistic arguments are easier to present in this way, so we will introduce notation for this perspective here as well. We call a function $\omega : X^i \to \set{0,1}$ a configuration, and we refer to the elements of $\omega^{-1}\set{1}$ as open plaquettes and the elements of $\omega^{-1}\set{0}$ as closed plaquettes of $\omega,$ respectively. We then define the associated complex $P = P\paren{\omega}$ to be the union of the $(i-1)$-skeleton of $X$ and the open plaquettes of $\omega.$ 

\subsection{Positive Association}

Perhaps the most important property of the random-cluster model is the fact that it satisfies the FKG inequality. Hiraoka and Shirai showed that the plaquette random-cluster model also has this property, which we will use extensively. Since their proof is short, we reproduce it here for completeness.

\begin{Theorem}[Hiraoka and Shirai]\label{thm:i-FKG}
Let $p\in(0,1),$ and $q\geq 1,$ and $X$ a finite cubical complex. Then $\mu_{X}$ satisfies the FKG lattice condition and is thus positively associated, meaning that for any events $E,F$ that are increasing with respect to $\omega,$
\begin{equation*}
    \mu_{X}\paren{E\cap F} \geq \mu_{X}\paren{E}\mu_{X}\paren{F}.
\end{equation*}
\end{Theorem}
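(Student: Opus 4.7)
The plan is to verify the FKG lattice condition
\[\mu_{X}(\omega \vee \omega')\,\mu_{X}(\omega \wedge \omega') \;\geq\; \mu_{X}(\omega)\,\mu_{X}(\omega')\]
for all configurations $\omega,\omega'\in\{0,1\}^{X^i}$, and then invoke Holley's theorem to conclude positive association. By the standard reduction, it suffices to check this when $\omega$ and $\omega'$ differ in exactly two coordinates, i.e.\ to show that for every configuration with associated complex $P$ and every pair of distinct $i$-plaquettes $\sigma_1,\sigma_2$ not in $P$,
\[\mu_{X}(P\cup\{\sigma_1,\sigma_2\})\,\mu_{X}(P) \;\geq\; \mu_{X}(P\cup\{\sigma_1\})\,\mu_{X}(P\cup\{\sigma_2\}).\]
The Bernoulli weights $p^{|\cdot|}(1-p)^{|X^i|-|\cdot|}$ are modular in the configuration, so they cancel. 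Since $q\ge 1$, taking logarithms reduces the inequality to the supermodularity statement
\[\mathbf{b}_{i-1}(P\cup\{\sigma_1,\sigma_2\}) + \mathbf{b}_{i-1}(P) \;\geq\; \mathbf{b}_{i-1}(P\cup\{\sigma_1\}) + \mathbf{b}_{i-1}(P\cup\{\sigma_2\}).\]

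The main step is this homological supermodularity, and it follows from a clean observation that is actually simpler than the analogous argument for the classical random-cluster model: because every complex under consideration contains the full $(i-1)$-skeleton, the cycle space $Z_{i-1}$ does not depend on which $i$-plaquettes are present, and only $B_{i-1}=\operatorname{im}\partial_i$ varies. Explicitly, for any $Q$ containing the $(i-1)$-skeleton and any $i$-plaquette $\sigma\notin Q$,
\[\mathbf{b}_{i-1}(Q\cup\sigma) - \mathbf{b}_{i-1}(Q) \;=\; \dim B_{i-1}(Q) - \dim B_{i-1}(Q\cup\sigma) \;\in\; \{-1,0\},\]
with value $-1$ precisely when $\partial\sigma\notin B_{i-1}(Q)$. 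Since $B_{i-1}$ is monotone increasing in $Q$, the event $\{\partial\sigma\in B_{i-1}(Q)\}$ is monotone in $Q$, so the marginal increment $\mathbf{b}_{i-1}(Q\cup\sigma)-\mathbf{b}_{i-1}(Q)$ is non-decreasing in $Q$; setting $Q=P$ versus $Q=P\cup\{\sigma_1\}$ and $\sigma=\sigma_2$ gives the desired inequality.

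The only subtle point is the direction of the monotonicity: adding plaquettes decreases $\mathbf{b}_{i-1}$ (never increases it), so one must verify that this interacts with $q\ge 1$ to give the correct direction in the FKG lattice condition. This is really the only thing that could go wrong, but the case analysis is immediate — when the right-hand increment is $-1$ the left is trivially at least $-1$, and when the right is $0$ the containment $B_{i-1}(P)\subseteq B_{i-1}(P\cup\{\sigma_1\})$ forces the left to be $0$ as well. With supermodularity in hand, the FKG lattice condition is immediate, and standard Holley--FKG machinery (which requires strict positivity of $\mu_X$, ensured by $p\in(0,1)$) yields positive association for all increasing events.
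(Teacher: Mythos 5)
Your proof is correct, and it takes a genuinely different route from the paper. The paper proves the required concavity of Betti numbers,
\[
\mathbf{b}_{k}(A\cap B) + \mathbf{b}_{k}(A\cup B) \;\geq\; \mathbf{b}_{k}(A) + \mathbf{b}_{k}(B)\,,
\]
for \emph{arbitrary} spaces $A,B$ by running the Mayer--Vietoris exact sequence and collecting ranks and kernels; this general inequality, applied directly to $A=P(\omega)$ and $B=P(\omega')$, gives the FKG lattice condition without any reduction to pairs differing in two coordinates. Your argument instead first invokes the standard two-coordinate reduction, then exploits the special structure of the model: every admissible complex contains the full $(i-1)$-skeleton, so $Z_{i-1}$ is fixed and only $B_{i-1}=\operatorname{im}\partial_i$ varies, which is monotone in the set of open $i$-plaquettes. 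This makes the increment $\mathbf{b}_{i-1}(Q\cup\sigma)-\mathbf{b}_{i-1}(Q)\in\{-1,0\}$ non-decreasing in $Q$, and supermodularity follows by the case analysis you give. What the paper's Mayer--Vietoris approach buys is generality (it makes no use of the fixed skeleton, works for any $k$, and handles the full lattice condition in one stroke); what your approach buys is elementariness and transparency — it avoids exact sequences entirely and makes it visible exactly where the inequality is tight, namely when the class $[\partial\sigma_2]$ is already killed in $P\cup\{\sigma_1\}$ but not in $P$. Both are valid proofs of the theorem.

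One small caveat: your remark that this is ``simpler than the analogous argument for the classical random-cluster model'' is a bit of an overstatement — the $i=1$ case of your argument \emph{is} essentially the classical argument (adding an edge either merges two components or not, and ``merging'' is antitone in the subgraph), just phrased in homological language.
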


\begin{proof}
The key observation is that for any topological spaces $A,B$ and any $k \in \Z_{\geq 0},$
\begin{equation}\label{eq:mayervietorisbetti}
    \mathbf{b}_k\paren{A \cap B} + \mathbf{b}_k\paren{A \cup B} \geq \mathbf{b}_k\paren{A} + \mathbf{b}_k\paren{B}\,.
\end{equation}
To see this, consider a portion of the Mayer-Vietoris exact sequence (refer to Section 2.2 of~\cite{hatcher2002algebraic}):
\[\begin{tikzcd}
H_k\paren{A \cap B} \arrow{r}{\varphi}  & H_k\paren{A} \oplus H_k\paren{B} \arrow{r}{\chi} & H_k\paren{A \cup B} \arrow{r}{\partial} & H_{k-1}\paren{A \cap B}
\end{tikzcd}
\,.\]
The first isomorphism theorem gives
\begin{align*}
    \mathbf{b}_k\paren{A \cap B} = \rank \varphi + \dim \ker \varphi\,,\\
    \mathbf{b}_k\paren{A} + \mathbf{b}_k\paren{B}  = \rank \chi + \dim \ker \chi\,,\\
    \mathbf{b}_k\paren{A \cup B} = \rank \partial + \dim \ker \partial\,.
\end{align*}
By exactness, $\rank \varphi = \dim \ker \chi$ and $\rank \chi = \dim \ker \partial,$ so putting everything together yields
\[\mathbf{b}_k\paren{A \cap B} + \mathbf{b}_k\paren{A \cup B} - \mathbf{b}_k\paren{A} - \mathbf{b}_k\paren{B} = \dim \ker \varphi + \rank \partial \geq 0\,.\]

Now since $\mu_{X}$ is strictly positive for $p\in(0,1),$ it is enough to check the FKG lattice condition, which, written in terms of configurations, requires that 
\[\mu_{X}\paren{\omega \vee \omega'}\mu_{X}\paren{\omega \wedge \omega'} \geq \mu_{X}\paren{\omega} \mu_{X}\paren{\omega'}\]
for any configurations $\omega,\omega'.$ Since 
\[\eta\paren{\omega \vee \omega'} + \eta\paren{\omega \wedge \omega'} = \eta\paren{\omega} + \eta\paren{\omega'}\,,\]
we only need to check that
\[\mathbf{b}_{i-1}\paren{P \cap P'} + \mathbf{b}_{i-1}\paren{P \cup P'} \geq \mathbf{b}_{i-1}\paren{P} + \mathbf{b}_{i-1}\paren{P'}\,,\]
where $P'$ is the complex associated to $\omega'$ (where we are using the definition of the plaquette random-cluster model). This inequality is a special case of Equation~\ref{eq:mayervietorisbetti}.

\end{proof}

\subsection{The Plaquette Random-Cluster Model in Infinite Volume}
\label{sec:RCM_infinitevolume}

In this section we will show that, like the classical random-cluster model, the plaquette random-cluster model can be extended to an infinite volume setting. Specifically, the plaquette random-cluster model on $\Z^d$ will be defined via limits of plaquette random-cluster models on finite boxes $\Lambda_n \coloneqq [-n,n]^d \cap \Z^d.$ The proof given in Chapter 4 of~\cite{grimmett2006random} only requires minor changes to extend to higher dimensions, which are described here.

We will need to consider both free and wired boundary conditions for the plaquette random-cluster model in a subspace. A more general discussion of boundary conditions is given in Appendix~\ref{app:boundaryconditions}. The free boundary measure $\mu_{\Lambda_n}^{\mathbf{f}} = \mu_{\Lambda_n,p,q,i}^{\mathbf{f}}$ can be defined as the i-random-cluster model on the subcomplex with vertices $\Lambda_n.$ The wired boundary measure $\mu_{\Lambda_n}^{\mathbf{w}}=\mu_{\Lambda_n,p,q,i}^{\mathbf{w}}$ can be defined as the same measure conditioned on all plaquettes contained in $\partial [-n,n]^d$ being open. By an analogue of Theorem 4.19 of~\cite{grimmett2006random}, the weak limits of each of these measures exist.

\begin{Proposition}
Let $p \in \brac{0,1}$ and $q \geq 1.$
\begin{enumerate}[label=(\alph*)]
\item The limits $\mu^{\mathbf{f}}_{\Z^d} \coloneqq \lim_{n \to \infty} \mu_{\Lambda_n}^{\mathbf{f}}$ and $\mu_{\Z^d}^{\mathbf{w}} \coloneqq \lim_{n \to \infty} \mu_{\Lambda_n}^{\mathbf{w}}$ exist.
\item $\mu^{\mathbf{f}}_{\Z^d}$ and $\mu^{\mathbf{w}}_{\Z^d}$ are automorphism invariant.
\item $\mu^{\mathbf{f}}_{\Z^d}$ and $\mu^{\mathbf{w}}_{\Z^d}$ are positively associated.
\item $\mu^{\mathbf{f}}_{\Z^d}$ and $\mu^{\mathbf{w}}_{\Z^d}$ are tail-trivial, that is, for any event $A$ that is independent of the states of any finite set of plaquettes, $\mu^{\mathbf{f}}_{\Z^d}\paren{A}, \mu^{\mathbf{w}}_{\Z^d}\paren{A} \in \set{0,1}.$
\item For any nontrivial translation $\mathbf{t},$ $\mu^{\mathbf{f}}_{\Z^d}$ and $\mu^{\mathbf{w}}_{\Z^d}$ are $\mathbf{t}$-ergodic, that is, any $\mathbf{t}$-invariant random variable is $\mu^{\mathbf{f}}_{\Z^d}$-almost-surely and $\mu^{\mathbf{w}}_{\Z^d}$-almost-surely constant.
\end{enumerate}
\end{Proposition}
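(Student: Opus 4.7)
The plan is to follow the proof of Theorem 4.19 of~\cite{grimmett2006random}, adapting the arguments for the classical random-cluster model to the higher-dimensional setting. The key technical ingredient is a domain Markov property: for any finite cubical subcomplex $\Delta \subset \Lambda_n$, the conditional law of the configuration on the plaquettes of $\Delta$, given the states of the plaquettes outside $\Delta$, should itself be a plaquette random-cluster model on $\Delta$ with boundary conditions inherited from the exterior configuration. In the classical ($i=1$) case this amounts to identifying vertices on $\partial \Delta$ that are connected through the exterior; for general $i$ one must instead analyze how $\mathbf{b}_{i-1}\paren{P}$ decomposes when $P$ is glued along interface plaquettes, which can be done via the Mayer--Vietoris sequence already invoked in the proof of Theorem~\ref{thm:i-FKG}.

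With the domain Markov property in hand, (a) and (b) follow by a monotonicity argument. Using FKG (Theorem~\ref{thm:i-FKG}) together with the Mayer--Vietoris inequality from that proof, one shows that adding open plaquettes on the exterior acts monotonically on $\mu_{\Delta}$, so boundary conditions are stochastically ordered. Consequently $\mu^{\mathbf{f}}_{\Lambda_n}$ is stochastically increasing and $\mu^{\mathbf{w}}_{\Lambda_n}$ stochastically decreasing in $n$, and the weak limits exist as limits of monotone sequences of probability measures on the product space $\set{0,1}^{\paren{\Z^d}^i}$. Automorphism invariance follows because the limit is independent of the exhausting sequence and each $\mu^{\mathbf{f}}_{\Lambda_n}, \mu^{\mathbf{w}}_{\Lambda_n}$ is invariant under the symmetries of $\Lambda_n$.

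For (c), positive association passes to the weak limit since it suffices to verify FKG on increasing cylinder events. For (d), let $A$ be a tail event and $B$ a cylinder event supported on plaquettes of $\Lambda_n$. Since $A$ is tail measurable, one approximates $1_A$ by indicator functions of increasing and decreasing cylinder events supported outside $\Lambda_m$ for $m \gg n$; combining FKG applied to these approximations with the domain Markov property gives that $A$ and $B$ are asymptotically independent, so $\mu\paren{A \cap B} = \mu\paren{A}\mu\paren{B}$. Taking $B$ to be a cylinder approximation of $A$ yields $\mu\paren{A} = \mu\paren{A}^2 \in \set{0,1}$. Part (e) then follows immediately from (d) and translation invariance, since any translation-invariant event is tail measurable.

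The main obstacle is formulating and proving the domain Markov property itself: $\mathbf{b}_{i-1}\paren{P}$ is a global topological invariant and does not split cleanly over a spatial partition the way the number of connected components does in the classical case. The appropriate statement uses Mayer--Vietoris to write $\mathbf{b}_{i-1}\paren{P} = \mathbf{b}_{i-1}\paren{P \cap \Delta} + \mathbf{b}_{i-1}\paren{P \setminus \Delta} + c\paren{P, \Delta}$, where the correction $c\paren{P, \Delta}$ depends only on the configurations near the interface and plays the role of Grimmett's ``wired'' identifications. Once this correction is absorbed into the definition of admissible boundary conditions, so that the conditional measure remains in a suitable enlargement of the family of plaquette random-cluster measures on $\Delta$, the remainder of the argument is routine.
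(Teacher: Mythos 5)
Your proposal takes essentially the same route as the paper, which itself defers to Theorem~4.19 and Corollary~4.23 of Grimmett's book and notes that the arguments carry over with only minor changes. You correctly identify the one genuinely new ingredient, namely how boundary conditions interact with $\mathbf{b}_{i-1}$; the paper handles this in Appendix~\ref{app:boundaryconditions} by defining $\mu^{\xi}_{\Lambda_n}$ via a glued complex $Q_{\omega,\xi}$ built by attaching an abstract complex $A_{\xi}$ along the boundary skeleton, which is close in spirit to your Mayer--Vietoris correction term $c(P,\Delta)$, though packaged differently.

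One small but real error: your justification of part~(e) asserts that ``any translation-invariant event is tail measurable.'' That is false in general (e.g.\ the event that two plaquettes related by $\mathbf{t}$ always agree is $\mathbf{t}$-invariant but not tail). The correct argument, and the one in Grimmett's Corollary~4.23, is that tail-triviality implies mixing with respect to translations (approximate a translation-invariant event $A$ by a cylinder event $B$, then $\mu(A\cap \mathbf{t}^n B)\to\mu(A)\mu(B)$, and take $B\approx A$), which yields ergodicity.
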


\begin{proof}
(a), (b), (d) Each proof is the same as in Theorem 4.19 of~\cite{grimmett2006random}

(c) Both measures are limits of positively associated measures by Lemma~\ref{lemma:positiveassociation}, so they are positively associated by Proposition 4.10 of~\cite{grimmett2006random}.

(e) The proof is the same as in Corollary 4.23 of~\cite{grimmett2006random}. 
\end{proof}

\subsection{Duality for the Random-Cluster Model}
In the classical random-cluster model in the plane, the dual graph is also distributed as a random-cluster model with parameters $q$ and $p^*$ where 
\[p^* \coloneqq \frac{\paren{1-p}q}{\paren{1-p}q + p}\,.\]
Beffara and Duminil-Copin~\cite{beffara2012self} used this relationship to prove that the critical probability is the self-dual point where $p = p^*,$ namely
\begin{align*}
    p_{\mathrm{sd}} \coloneqq \frac{\sqrt{q}}{1+\sqrt{q}}\,.
\end{align*}
In this section, we consider a similar duality in higher dimensions within the torus, showing that the dual of the $i$-dimensional plaquette random-cluster model is approximately a $(d-i)$-dimensional random-cluster model. We will see below that this recovers the duality properties of Potts lattice gauge theory as observed in~\cite{kogut1980z}, but in a way that perhaps provides more geometric intuition. Our proof makes heavy use of the topological duality results in Section~\ref{sec:topdual}, especially Theorem~\ref{thm:alexander}. We will use the notation from that theorem.

Since it will be important to distinguish between giant and local cycles, we recall their definitions. Let $\phi : P \hookrightarrow X$ be the inclusion map and let $\phi_* : H_i\paren{P} \to H_i\paren{X}$ be the induced map on $i$th homology. We say that an element $\alpha \in H_i\paren{P}$ is a giant cycle if $I_*\paren{\alpha} \neq 0$ and a local cycle if $i_*\paren{\alpha} = 0.$ Recall that $b_i\paren{P} = \rank \phi_*$ is informally the number of giant cycles in $P.$

First we define a \say{balanced} version~\cite{beffara2012self} which satisfies exact duality:

\begin{align*}
    \tilde{\mu}_{\T^d_N}\paren{P} \coloneqq \frac{\paren{\sqrt{q}}^{-b_i\paren{P}}}{\tilde{Z}}p^{\eta\paren{P}}\paren{1-p}^{\abs{F^i_N} - \eta\paren{P}}q^{\mathbf{b}_{i-1}\paren{P}}\,.
\end{align*}
The additional term $\paren{\sqrt{q}}^{-b_i\paren{P}}$ term ``corrects'' for the different behavior of local and global cycles under duality. The two models are absolutely continuous with respect to each other with a Radon-Nikodym derivatives bounded above and below by functions of $q$ and the same is true for their dual models. As such, a sharp threshold for $\tilde{\mu}_{\T^d_N}$ implies one for $ \tilde{\mu}_{\T^d_N}.$ 

\begin{Theorem}\label{theorem:duality}
The balanced plaquette random-cluster model satisfies
\begin{equation*}
    \tilde{\mu}_{\T^d_N,p,q,i}\paren{P} \buildrel d \over = \tilde{\mu}_{\T^d_N,p^*,q,d-i}\paren{P^\bullet}\,.
\end{equation*}
\end{Theorem}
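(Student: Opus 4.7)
The plan is to compute the pointwise ratio
\[R(P) = \frac{\tilde{\mu}_{\T^d_N,p,q,i}(P)}{\tilde{\mu}_{\T^d_N,p^*,q,d-i}(P^\bullet)}\]
and show it is independent of $P$. Since $P \mapsto P^\bullet$ is a bijection between configurations and both sides are probability measures, a constant ratio forces equality of distributions.

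For the $p$-dependent factors I would use the combinatorial identity $\eta(P^\bullet) = \abs{F_N^i} - \eta(P)$ (together with $\abs{F_N^i} = \abs{F_N^{d-i}}$) and the algebraic identities $p/(1-p^*) = (1-p)q+p$ and $(1-p)/p^* = ((1-p)q+p)/q$ to obtain
\[\frac{p^{\eta(P)}(1-p)^{\abs{F_N^i}-\eta(P)}}{(p^*)^{\eta(P^\bullet)}(1-p^*)^{\abs{F_N^{d-i}}-\eta(P^\bullet)}} = \paren{(1-p)q+p}^{\abs{F_N^i}}\, q^{\eta(P)-\abs{F_N^i}},\]
leaving a lone $P$-dependent factor $q^{\eta(P)}$ that must be cancelled by the homological contribution.

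For the homological factors, Proposition~\ref{prop:c6} applied separately to $P$ and to $P^\bullet$ yields
\[\mathbf{b}_{i-1}(P) - \mathbf{b}_{d-i-1}(P^\bullet) = \mathbf{b}_i(P) - \mathbf{b}_{d-i}(P^\bullet) - 2\eta(P) + \abs{F_N^i} + \text{const},\]
one of whose $q^{-\eta(P)}$ factors cancels the $q^{\eta(P)}$ above. Decomposing $\mathbf{b}_i(P) = a_i(P) + b_i(P)$ and $\mathbf{b}_{d-i}(P^\bullet) = a_{d-i}^\bullet(P^\bullet) + b_{d-i}^\bullet(P^\bullet)$ via rank--nullity (Equation~\ref{eq:1}), then applying Equation~\ref{eq:3} in the form $a_{i-1}(P) = a_{d-i}^\bullet(P^\bullet)$ together with $\mathbf{b}_{i-1}(P) = a_{i-1}(P) + \binom{d}{i-1}$ (valid since $P$ contains the full $(i-1)$-skeleton), one eliminates $a_i(P) - a_{d-i}^\bullet(P^\bullet)$ in favor of $\eta(P)$ and $b_i(P)$ and reduces to
\[R(P) \;\propto\; q^{\paren{b_i(P) + b_{d-i}^\bullet(P^\bullet)}/2}.\]
The half-integer exponent is exactly what the $(\sqrt{q})^{-b_i}$ and $(\sqrt{q})^{-b_{d-i}^\bullet}$ balancing factors in the definitions of the two measures are designed to absorb, and accounts for why the \emph{unbalanced} plaquette random-cluster model is only approximately self-dual.

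The main obstacle is verifying the identity $b_i(P) + b_{d-i}^\bullet(P^\bullet) = \binom{d}{i}$, the ``global'' Alexander duality on the torus that makes the final exponent $P$-independent. While this is essentially the correct reading of Equation~\ref{eq:2}, I would reprove it by extending the Poincar\'e--Lefschetz long exact sequence argument from the proof of Theorem~\ref{thm:alexander} to the pair $(\T^d_N, \T^d_N \setminus P^\bullet)$ in dimension $i-1$: the rank of the connecting map equals $a_{i-1}(P) = a_{d-i}^\bullet(P^\bullet)$, and rearranging the exact sequence using $\dim H_{i-1}(\T^d_N) = \binom{d}{i-1}$ and $\dim H_i(\T^d_N) = \binom{d}{i}$ yields the relation $b_{d-i}^\bullet(P^\bullet) = \binom{d}{i} - b_i(P)$.
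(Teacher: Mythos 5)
Your proposal is correct and follows essentially the same route as the paper: both arguments transform the weight of $P$ into that of $P^\bullet$ using the Euler--Poincar\'e relation of Proposition~\ref{prop:c6}, the rank--nullity split (Equation~\ref{eq:1}), and the Alexander duality identities (Equations~\ref{eq:2}--\ref{eq:3}), with the $(\sqrt{q})^{-b}$ balancing factors absorbing the leftover term $q^{\pm(b_i(P) + b_{d-i}^\bullet(P^\bullet))/2}$. Your bookkeeping differs only cosmetically --- you apply Proposition~\ref{prop:c6} to both $P$ and $P^\bullet$ and invoke Equation~\ref{eq:3} at index $k=i-1$ rather than $k=i$, and the final exponent should carry a minus sign (inconsequential) --- and you correctly identify that the form of Equation~\ref{eq:2} actually needed is $b_i(P) + b_{d-i}^\bullet(P^\bullet) = \binom{d}{i}$.
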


\begin{proof}
The idea of the proof is the same as in the classical random-cluster model. We need only take care to keep track of the giant cycles and local cycles separately, since they behave differently under duality. Note that

\begin{align}\label{eq:4}
\eta\paren{P} + \eta\paren{P^{\bullet}} = \abs{F_N^i}\,.
\end{align}

Recall also that $P$ and $P^{\bullet}$ contain the complete $(i-1)$-skeleton and $(d-i-1)$-skeleton respectively, so

\begin{align}\label{eq:8}
    b_{i-1} = \binom{d}{i-1},\qquad b_{d-i-1} = \binom{d}{d-i-1}\,.
\end{align}

It is not crucial to the argument, but we can simplify the upcoming calculation slightly to note from the bijection between plaquettes and dual plaquettes that 
\begin{align}\label{eq:6}
    \abs{F^i_N} = \abs{F^{d-i}_N}\,.
\end{align}

Lastly, we recall the following property of $p^*,$
\begin{align}\label{eq:7}
    \frac{pp^*}{\paren{1-p}\paren{1-p^*}} = q \,.
\end{align}

For convenience, in the following calculation we will denote $C \coloneqq \abs{F^i_N} = \abs{F^{d-i}_N}$ (equal by Equation~\ref{eq:6}).
Now we compute  $\tilde{\mu}_{\T^d_N,p,q,i}\paren{P}=$
\begin{align*}
   &\phantom{=}\frac{\paren{\sqrt{q}}^{-b_i}}{\tilde{Z}}p^{\eta\paren{P}}\paren{1-p}^{\abs{F^i_N}-\eta\paren{P}}q^{\mathbf{b}_{i-1}}\\
    &= \frac{\paren{1-p}^{\abs{F^i_N}}}{\tilde{Z}}\paren{\sqrt{q}}^{-b_i}\paren{\frac{p}{1-p}}^{\eta\paren{P}}q^{\mathbf{b}_{i-1}}\\
    &= \frac{q^{\oldconstant{const:5}}\paren{1-p}^{\abs{F^i_N}}}{\tilde{Z}}\paren{\sqrt{q}}^{-b_i}\paren{\frac{p}{q\paren{1-p}}}^{\eta\paren{P}}q^{\mathbf{b}_{i}}\;&&\text{by~(\ref{eq:5})}\\
    &= \frac{q^{\oldconstant{const:5}}\paren{1-p}^{\abs{F^i_N}}}{\tilde{Z}}\paren{\sqrt{q}}^{-b_i}\paren{\frac{q(1-p)}{p}}^{-\eta\paren{P}}q^{a_i + b_i} && \text{by~(\ref{eq:1})}\\
    &= \frac{q^{\oldconstant{const:5}}\paren{1-p}^{\abs{F^i_N}}}{\tilde{Z}}\paren{\sqrt{q}}^{-b_i}\paren{\frac{p^*}{1-p^*}}^{-\eta\paren{P}}q^{a_{i} + b_{i}}&& \text{by~(\ref{eq:7})}\\
    &= \frac{q^{\oldconstant{const:5}+\binom{d}{i}/2}\paren{1-p}^{\abs{F^i_N}}}{\tilde{Z}}\paren{\sqrt{q}}^{b^{\bullet}_{d-i}}\paren{\frac{p^*}{1-p^*}}^{-\eta\paren{P}}q^{a_{d-i-1}^{\bullet} - b_{d-i}^{\bullet}}&& \text{by~(\ref{eq:2}),~(\ref{eq:3})}\\
    &= \frac{q^{\oldconstant{const:5}+\binom{d}{i}/2}\paren{1-p}^{\abs{F^i_N}}}{\tilde{Z}}\paren{\sqrt{q}}^{b^{\bullet}_{d-i}}\paren{\frac{p^*}{1-p^*}}^{\eta\paren{P^{\bullet}} - \abs{F^i_N}}q^{a_{d-i-1}^{\bullet} - b_{d-i}^{\bullet}}&& \text{by~(\ref{eq:4})}\\
    &= \frac{q^{\oldconstant{const:5}+\binom{d}{i}/2}\paren{1-p}^{\abs{F^i_N}}}{q^{b^{\bullet}_{d-i-1}}\tilde{Z}}\paren{\sqrt{q}}^{-b^{\bullet}_{d-i}}\paren{\frac{p^*}{1-p^*}}^{\eta\paren{P^{\bullet}} - \abs{F^i_N}}q^{\mathbf{b}_{d-i-1}^{\bullet}}&& \text{by~(\ref{eq:1}),~(\ref{eq:2})}\\
    &= \frac{q^{\oldconstant{const:5}+\binom{d}{i}/2}\paren{1-p}^{\abs{F^i_N}}}{q^{\binom{d}{d-i-1}}p^{\abs{F^{i}_N}}\tilde{Z}} \paren{\sqrt{q}}^{-b^{\bullet}_{d-i}} \frac{\paren{p^*}^{\eta\paren{P^{\bullet}}}}{\paren{1-p^*}^{\eta\paren{P^{\bullet}} - \abs{F^{d-i}_N}}}q^{\mathbf{b}_{d-i-1}^{\bullet}}&& \text{by~(\ref{eq:8})}\\
    &\coloneqq \frac{\paren{\sqrt{q}}^{-b^{\bullet}_{d-i}}}{\tilde{Z}^\bullet}\paren{p^*}^{\eta\paren{P^{\bullet}}}\paren{1-p^*}^{\abs{F^{d-i}_N} - \eta\paren{P^{\bullet}}}q^{\mathbf{b}_{d-i-1}^{\bullet}}\\ 
    &= \tilde{\mu}_{\T^d_N,p^*,q,d-i}\paren{P^{\bullet}}\,,
\end{align*}
where we used the definition of the $(d-i)$-dimensional random-cluster model in the last step.
\end{proof}

As a corollary, we have the following duality relationship between the normalizing constants.
\begin{Corollary}
\label{corollary:normalizing}
\[Z(p^*,q,d-i,N)=q^{c+\binom{d}{i}/2-\binom{d}{d-i-1}}\paren{1-p}^{\abs{F^i_N}}Z(p,q,i,N)\,,\]
where $c$ is the constant $\oldconstant{const:5}$ defined in Proposition~\ref{prop:c6}.
\end{Corollary}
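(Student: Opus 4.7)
The corollary follows by bookkeeping inside the chain of equalities in the proof of Theorem~\ref{theorem:duality}. That chain expresses the balanced summand for $P$ with parameters $(p,q,i)$ as a $P$-independent constant times the balanced summand for $P^{\bullet}$ with parameters $(p^{*},q,d-i)$. The plan is to redo the same chain, keeping track of each $P$-independent factor that accumulates, and then sum over all $P$ (equivalently over $P^{\bullet}$ via the bijection) to obtain the stated identity between partition functions.

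Concretely, the calculation uses, in order: (i) Proposition~\ref{prop:c6} to replace $q^{\mathbf{b}_{i-1}(P)}$ by $q^{-c}q^{-\eta(P)}q^{\mathbf{b}_{i}(P)}$; (ii) the Alexander duality relations of Theorem~\ref{thm:alexander}, which give $\mathbf{b}_{i}(P)=a_{i}(P)+b_{i}(P)$, $a_{i}(P)=a^{\bullet}_{d-i-1}(P^{\bullet})$, and $b_{i}(P)+b^{\bullet}_{d-i}(P^{\bullet})=\binom{d}{i}$; (iii) the dual decomposition $\mathbf{b}^{\bullet}_{d-i-1}(P^{\bullet})=a^{\bullet}_{d-i-1}(P^{\bullet})+b^{\bullet}_{d-i-1}(P^{\bullet})$ combined with $b^{\bullet}_{d-i-1}(P^{\bullet})=\binom{d}{d-i-1}$, which is a $P$-independent constant because $P^{\bullet}$ contains the full $(d-i-1)$-skeleton and hence surjects onto $H_{d-i-1}(\T^d_N;\F)$; and (iv) the algebraic identity $p/(q(1-p))=(1-p^{*})/p^{*}$ together with $\eta(P)+\eta(P^{\bullet})=|F^i_N|=|F^{d-i}_N|$ to convert the $p$-dependent factors. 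Summing over all $P$ converts the resulting pointwise identity into the partition function relation with accumulated constant $q^{c+\binom{d}{i}/2-\binom{d}{d-i-1}}(1-p)^{|F^i_N|}$.

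The main obstacle is purely the accounting: each step in (i)--(iv) contributes a specific $P$-independent factor to the multiplicative constant, and one must verify that after all substitutions the terms involving $b_{i}(P)$ and $b^{\bullet}_{d-i}(P^{\bullet})$ combine via $b_{i}+b^{\bullet}_{d-i}=\binom{d}{i}$ to yield the $q^{\binom{d}{i}/2}$ contribution without leaving any residual $P$-dependence. No ingredient beyond what is already used in Theorem~\ref{theorem:duality} is required; the Corollary simply records the explicit form of the constant that is implicit in the chain of equalities there.
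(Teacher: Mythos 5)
Your approach is exactly the paper's (implicit) argument: Corollary~\ref{corollary:normalizing} is obtained by summing the pointwise identity established in the proof of Theorem~\ref{theorem:duality} over all configurations $P$ (equivalently over $P^{\bullet}$), so that the $P$-independent constant which relates the balanced $(p,q,i)$-summand to the balanced $(p^{*},q,d-i)$-summand becomes a relation between the two balanced partition functions $\tilde{Z}$. You correctly identify the ingredients (Proposition~\ref{prop:c6}, Theorem~\ref{thm:alexander}, $b^{\bullet}_{d-i-1}=\binom{d}{d-i-1}$, $\eta(P)+\eta(P^{\bullet})=\abs{F^i_N}$, and $p/(q(1-p))=(1-p^{*})/p^{*}$), and the observation that the skeleton inclusion forces $b^{\bullet}_{d-i-1}$ to be a fixed constant is the right one.

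One caution, though: you assert the accumulated constant rather than derive it, and if you actually do the bookkeeping you will find it does not come out exactly as displayed. In step~(iv), rewriting $\paren{p^{*}/(1-p^{*})}^{\eta(P^{\bullet})-\abs{F^i_N}}$ in the form $(p^{*})^{\eta(P^{\bullet})}(1-p^{*})^{\abs{F^{d-i}_N}-\eta(P^{\bullet})}$ leaves behind a factor $(p^{*})^{-\abs{F^i_N}}$ that is not accounted for by the bare $(1-p)^{\abs{F^i_N}}$, and depending on which side of the identity you solve for, the signs on $c$, $\binom{d}{i}/2$, and $\binom{d}{d-i-1}$ flip. A useful sanity check is the self-dual case $d=2i$, $p=p^{*}=p_{\mathrm{sd}}=\sqrt{q}/(1+\sqrt{q})$: the $\tilde{Z}$'s on both sides coincide, so whatever constant you obtain must reduce to $1$ there (e.g.\ for $d=2$, $i=1$ one has $c=-N^2$, $\abs{F^1_N}=2N^2$, and $p_{\mathrm{sd}}/(1-p_{\mathrm{sd}})=\sqrt{q}$, so $q^{c-\binom{d}{i}/2+\binom{d}{d-i-1}}\paren{p^{*}/(1-p)}^{\abs{F^i_N}}=q^{-N^2}\cdot q^{N^2}=1$, whereas the displayed constant does not collapse to $1$). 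So: right method, but do carry out the accounting rather than quoting the constant, and keep in mind that the $Z$ in the statement must be the balanced $\tilde{Z}$, since the unbalanced partition functions are not related by any $P$-independent constant.
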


\section{Relation to Potts Lattice Gauge Theory}
\label{sec:latticegauge}
We now elucidate the relationship between the $i$-dimensional plaquette random-cluster and $(i-1)$-dimensional Potts lattice gauge theory. In particular, we find a topological interpretation of the generalized Wilson loop variables. First, we review the coupling between these models, which generalizes the Edwards--Sokal coupling between the classical random-cluster model and the Potts model~\cite{fortuin1972random,edwards1988generalization}.

\begin{Proposition}[Hiraoka and Shirai~\cite{hiraoka2016tutte}]\label{prop:coupling}
Let $X$ be a finite cubical complex, $q$ be a prime, $p \in [0,1),$ and $p = 1-e^{-\beta}.$ Consider the coupling on $C^{i-1}\paren{X}\times \set{0,1}^{X^i}$ defined by
\[\kappa\paren{f,P\paren{\omega}} \propto \prod_{\sigma \in X^i}\brac{\paren{1-p}K\paren{\omega\paren{\sigma},0} + p K\paren{\omega\paren{\sigma},1}K\paren{\delta f\paren{\sigma},0}}\,,\]
where $K(x,y)$ is the Kronecker delta function. Then $\kappa$ has the following marginals.
\begin{itemize}
    \item The first marginal is the $q$-state Potts lattice gauge theory with inverse temperature $\beta$ given by
    \[\sum_{P} \kappa\paren{f,P} \propto e^{-\beta H(f)}\,,\]
    where $H(f)$ is the Hamiltonian for Potts lattice gauge theory:
        \[H\paren{f}=\sum_{\sigma}K\paren{\delta f\paren{\sigma},1}\,.\]
    \item The second marginal is the plaquette random-cluster model with parameters $p,q$ given by
    \[\sum_{f \in C^{i-1}\paren{X}} \kappa\paren{f,P} \propto p^{\eta\paren{P}}\paren{1-p}^{\abs{X^i} - \eta\paren{P}}q^{\mathbf{b}_{i-1}\paren{P;\,\F_q}}\,.\]
\end{itemize}
\end{Proposition}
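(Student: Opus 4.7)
The plan is to verify each of the two marginals of $\kappa$ by a direct computation, exploiting the fact that its density factors over $i$-plaquettes. For the first marginal I would fix $f$ and sum over $\omega \in \{0,1\}^{X^i}$ plaquette by plaquette. For each $\sigma \in X^i$ the choices $\omega(\sigma) = 0$ and $\omega(\sigma) = 1$ contribute $1-p$ and $p\,K(\delta f(\sigma), 0)$ respectively, so the summed factor equals $1$ when $\delta f(\sigma) = 0$ and equals $1 - p = e^{-\beta}$ otherwise. Multiplying over $\sigma \in X^i$ yields $e^{-\beta N(f)}$ where $N(f) = \#\{\sigma : \delta f(\sigma) \neq 0\}$, which is $e^{-\beta H(f)}$ up to an overall constant, as required.

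For the second marginal I fix $\omega$ (equivalently $P$) and sum over $f \in C^{i-1}(X;\F_q)$. The closed plaquettes contribute a factor $(1-p)^{|X^i| - \eta(P)}$ and the open ones contribute $p^{\eta(P)}$, leaving the product $\prod_{\sigma \in P} K(\delta f(\sigma), 0)$. Because $P$ contains the full $(i-1)$-skeleton of $X$, we have $C^{i-1}(X;\F_q) = C^{i-1}(P;\F_q)$, and requiring $\delta f(\sigma) = 0$ for every open $\sigma$ is exactly the cocycle condition on $P$; hence the sum over $f$ counts $|Z^{i-1}(P;\F_q)|$, and it remains to relate this quantity to $\mathbf{b}_{i-1}(P;\F_q)$.

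The main substantive step is the algebraic identity $|Z^{i-1}(P;\F_q)| = C \cdot q^{\mathbf{b}_{i-1}(P;\F_q)}$ for a constant $C$ independent of $P$. Applying rank-nullity to $\delta^{i-2} : C^{i-2}(P) \to C^{i-1}(P)$ gives $\dim B^{i-1}(P) = \dim C^{i-2}(P) - \dim Z^{i-2}(P)$, and since $P$ always contains the full $(i-1)$-skeleton these two dimensions depend only on that fixed skeleton, not on the random set $P$. Combined with $\dim Z^{i-1}(P) - \dim B^{i-1}(P) = \dim H^{i-1}(P;\F_q) = \mathbf{b}_{i-1}(P;\F_q)$ (using $H^{i-1} \cong H_{i-1}$ over a field) and the identity $|V| = q^{\dim V}$ for an $\F_q$-vector space $V$, this yields the claimed formula. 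This is precisely where primality of $q$ is used, since we need $\F_q$ to be a field for rank-nullity and the exponential counting identity to hold; as noted in Section~\ref{sec:prime}, this is the obstruction to extending the coupling to $\Z_n$ coefficients.
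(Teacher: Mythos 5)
Your proof is correct and follows the same approach as the paper's: factor the density over $i$-plaquettes, sum out $\omega$ plaquette-by-plaquette for the first marginal, and recognize the sum over $f$ as $\abs{Z^{i-1}(P;\,\F_q)}$ for the second. You merely spell out in slightly more detail (via rank--nullity on $\delta^{i-2}$) why $\dim B^{i-1}(P;\,\F_q)$ is independent of the configuration, which the paper disposes of in one line by noting that the $(i-1)$-skeleton is fixed.
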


\begin{proof}
We include Hiraoka and Shirai's proof adapted to our notation for completeness. The first marginal is calculated as
\begin{align*}
    \kappa_1\paren{f} & \coloneqq \sum_{\omega \in \set{0,1}^{X^i}} \kappa\paren{f,P\paren{\omega}}\\ &\propto \sum_{\omega \in \set{0,1}^{X^i}} \prod_{\sigma \in X^i} \brac{\paren{1-p}K\paren{\omega\paren{\sigma},0}+p K\paren{\omega\paren{\sigma},1}K\paren{\delta f\paren{\sigma},0}}\\
    &= \prod_{\sigma \in X^i} \brac{\paren{1-p}+p K\paren{\delta f\paren{\sigma},0}}\\
    &= \prod_{\sigma \in X^i} \brac{e^{-\beta}+\paren{1-e^{-\beta}} K\paren{\delta f\paren{\sigma},0}}\\
    &= e^{-\beta\abs{X^i}}\prod_{\sigma \in X^i} \brac{1+\paren{e^{\beta}-1} K\paren{\delta f\paren{\sigma},0}}\\
    &= e^{-\beta\abs{X^i}}e^{-\beta H(f)} \\
    &\propto e^{-\beta H(f)}\,.
\end{align*}

The second marginal is calculated as
\begin{align*}
    \kappa_2\paren{P} & \coloneqq \sum_{f \in C^{i-1}\paren{X}} \kappa\paren{f,P\paren{\omega}}\\
    &\propto \sum_{f \in C^{i-1}\paren{X}} \prod_{\sigma \in X^i} \brac{\paren{1-p}K\paren{\omega\paren{\sigma},0}+p K\paren{\omega\paren{\sigma},1}K\paren{\delta f\paren{\sigma},0}}\\
    &= \paren{1-p}^{\abs{X^i} -\eta\paren{P}}p^{\eta\paren{P}}\sum_{f \in C^{i-1}\paren{X}} \prod_{\substack{\sigma \in X^i\\\omega\paren{\sigma}=1}} K\paren{\delta f\paren{\sigma},0}\\
    &= \paren{1-p}^{\abs{X^i} -\eta\paren{P}}p^{\eta\paren{P}}q^{\dim Z^{i-1}\paren{P;\,\F_q}}\\
    &\propto \paren{1-p}^{\abs{X^i} -\eta\paren{P}}p^{\eta\paren{P}}q^{\mathbf{b}_{i-1}\paren{P;\,\F_q}}\,.
\end{align*}
The second to last line follows from the definition of a cocycle. The last line holds because the $(i-1)$-skeleton of $P$ does not depend on $\omega,$ so $B^{i-1}\paren{P;\,\F_q}$ is fixed.
\end{proof}

Next, we compute the conditional measures defined by the coupling.

\begin{Proposition}\label{prop:conditionalmeasures}
Let $p = 1-e^{-\beta}.$ Then the conditional measures of $\kappa$ are as follows.
\begin{itemize}
    \item Given $f,$ the conditional measure $\kappa\paren{\cdot \mid f}$ is Bernoulli plaquette percolation with probability $p$ on the set of plaquettes $\sigma$ that satisfy $\delta f\paren{\sigma} = 0.$
    \item Given $P,$ the conditional measure $\kappa\paren{\cdot \mid P}$ is the uniform measure on $\paren{i-1}$-cocycles in $Z^{i-1}\paren{P;\,\F_q}.$
\end{itemize}
\end{Proposition}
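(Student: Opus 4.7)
The plan is to exploit the product structure of the coupling $\kappa$ directly. Since the joint weight factorizes over plaquettes $\sigma \in X^i$, conditioning on either coordinate decouples the remaining randomness across plaquettes, and in each case one simply reads off which factors survive.

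For the first claim, fix $f$ and examine the bracketed factor for each $\sigma$ individually. When $\delta f\paren{\sigma} = 0$, the factor reduces to $\paren{1-p}K\paren{\omega\paren{\sigma},0} + p K\paren{\omega\paren{\sigma},1}$, giving an independent Bernoulli$\paren{p}$ weight on $\omega\paren{\sigma}$. When $\delta f\paren{\sigma} \neq 0$, the factor reduces to $\paren{1-p}K\paren{\omega\paren{\sigma},0}$, forcing $\omega\paren{\sigma} = 0$. Since the factors across distinct plaquettes multiply and no $f$-dependent cross-normalization remains, $\kappa\paren{\cdot \mid f}$ is exactly Bernoulli plaquette percolation at density $p$ on $\set{\sigma \in X^i : \delta f\paren{\sigma} = 0}$.

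For the second claim, fix $\omega$ and hence $P = P\paren{\omega}$. Closed plaquettes contribute the constant factor $\paren{1-p}$, which is independent of $f$, while open plaquettes contribute $p K\paren{\delta f\paren{\sigma}, 0}$. Thus
\[\kappa\paren{f \mid P} \;\propto\; \prod_{\sigma \,:\, \omega\paren{\sigma} = 1} K\paren{\delta f\paren{\sigma}, 0}\,,\]
which equals $1$ if $\delta f\paren{\sigma} = 0$ for every open plaquette $\sigma$ and $0$ otherwise. Because $P$ contains the full $\paren{i-1}$-skeleton of $X$, any $f \in C^{i-1}\paren{X}$ restricts to an $\paren{i-1}$-cochain on $P$ with the same values, and the surviving indicator is precisely the condition $f \in Z^{i-1}\paren{P;\,\F_q}$. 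No further $f$-dependence remains, so $\kappa\paren{\cdot \mid P}$ is the uniform measure on $Z^{i-1}\paren{P;\,\F_q}$.

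There is no serious obstacle here: both statements follow immediately from the factorized form of $\kappa$. The only mild subtlety is recognizing that \emph{$\delta f\paren{\sigma}=0$ for every open plaquette} is exactly the cocycle condition for $f$ viewed as an $\paren{i-1}$-cochain on $P$, which holds because the $i$-cells of $P$ are by definition its open plaquettes. As a sanity check, averaging these conditional measures against the opposite coordinate recovers the marginals already computed in Proposition~\ref{prop:coupling}.
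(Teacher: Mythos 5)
Your proof is correct and follows essentially the same argument as the paper's: read off the factorized form of $\kappa$ to see that conditioning on $f$ gives independent Bernoulli$(p)$ on the plaquettes where $\delta f$ vanishes, and that conditioning on $P$ places equal weight on exactly those cochains vanishing on $\partial\sigma$ for every open $\sigma$, i.e.\ the cocycles $Z^{i-1}(P;\,\F_q)$. You spell out the details more explicitly than the paper does, but the approach is the same.
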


\begin{proof}
From the definition of the coupling, under $\kappa\paren{\cdot \mid s}$ a plaquette $\sigma$ is open with probability $p$ independently of other plaquettes when $\delta f\paren{\sigma} = 0$ and always closed otherwise, giving the first conditional measure. The second conditional measure is determined by the observation that $\kappa\paren{\cdot \mid P}$ is supported on the set of $f \in C^{i-1}\paren{X}$ satisfying $\delta f\paren{\sigma} = 0$ for each $\sigma \in P,$ and that each such cochain has the same weight.
\end{proof}

In the next section, we discuss how to use this proposition to sample Potts lattice gauge theory.

\subsection{The Plaquette Swendsen--Wang Algorithm}
\label{sec:sw}
The classical Swendsen--Wang algorithm was the first non-local Monte Carlo algorithm for the Potts model, which alternately samples from the marginals in the combined random-cluster representation~\cite{swendsen1987nonuniversal,edwards1988generalization}. That is, given a coupling constant $\beta,$ a spin configuration $f\in C^0\paren{X;\,\F_q}$ is updated by first sampling a random graph $G$ by performing percolation with probability $p=1-1^{-\beta}$ on the edges between vertices with equal spins. Then, the spins are resampled uniformly on each component of $G.$ In our language, this corresponds to sampling a uniformly random cocycle in $Z^0\paren{G;\,\F_q}.$ The Swendsen--Wang algorithm is observed to converge significantly faster than Glauber dynamics, especially at and near criticality~\cite{swendsen1987nonuniversal}.   

We generalize the Swendsen--Wang algorithm to sample Potts lattice gauge theory using the coupled representation in Proposition~\ref{prop:coupling}. That is, given a cochain  $f\in C^{i-1}\paren{X;\,\F_q},$ a random $i$-complex $P$ is sampled by including plaquettes on which $\delta f$ vanishes independently with probability $p=1-e^{-\beta}.$ An updated cochain is then found by selecting a uniformly random cocycle from $Z^{i-1}\paren{P;\,\F_q}.$ It is easily seen that the resulting Markov chain is ergodic and satisfies detailed balance, with the stationary distribution being Potts lattice gauge theory. An implementation of this algorithm will be described in~\cite{PlaquetteSW}. 

When $d=2i$ and $q$ is an odd prime, we can interpret Theorem~\ref{thm:half} in terms of the qualitative behavior of the Swendsen--Wang algorithm on the torus at the self-dual point. Below the threshold, the behavior of the algorithm is ``local'' in the sense that all co-cycles in $Z^i\paren{P;\,\F_q}$ are trivial in the homology of the torus. On the other hand, when $p>p_c,$  $Z^i\paren{P;\,\F_q}$ contains $\binom{d}{i}$ giant cocycles up to cohomology (each given the same weight), and a ``non--local'' move happens with probability approaching $1-q^{-\binom{d}{i}}$ (as $P$ has $\binom{d}{i}$ giant cocycles, with high probability)\footnote{While we state our theorems in terms of giant cycles, the analogous results for giant cocycles follow immediately from them, either by the functoriality of $\mathrm{Hom}\paren{\cdot,\F_q}$ or by repeating the same proofs.}. Interestingly, at $p=p_c,$ the number of giant cocycles is somewhere between $0$ and $\binom{d}{i}$ with probability bounded away from $0$ and $1,$ so the probability of non-local behavior is non-zero but lower than above $p_c.$ Theorems~\ref{thm:one} and~\ref{thm:weak} have a similar interpretation.

\subsection{Potts Lattice Gauge Theory in Infinite Volume}
\label{sec:potts_infinitevolume}
We can use the infinite volume random-cluster measure to define the corresponding infinite volume Potts lattice gauge theory. We start with finite volume measure $\nu^{\mathbf{f}}_{\Lambda_n} =\nu^{\mathbf{f}}_{\Lambda_n,\beta,q,i-1,d},$ which is defined to be the usual Potts lattice gauge theory on the subcomplex induced by the vertices of $\Lambda_n$ (we use the superscript $\mathbf{f}$ because this measure can be coupled with the free random-cluster measure $\mu_{\Lambda_n}^{\mathbf{f}}$ as in Proposition~\ref{prop:coupling}). Next, we construct the limiting measure $\nu^{\mathbf{f}} = \nu^{\mathbf{f}}_{\beta,q,i-1,d}$ by taking $n \to \infty.$ A similar construction can be done with wired boundary conditions, but we will omit the details here. We start with an easy lemma that will allow us to choose random cocycles in increasing subcomplexes of $P$ in a consistent manner. 

\begin{Lemma}\label{lemma:finitesupport}
For any configuration $\omega \in \Omega,$ there is a basis $\mathcal{B}$ of $Z^{i-1}\paren{P\paren{\omega},F}$ so that each $(i-1)$-face of $\Z^d$ is in the support of finitely many elements of $\mathcal{B}.$
\end{Lemma}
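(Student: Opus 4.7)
The plan is to construct $\mathcal{B}$ inductively using an exhaustion of $\Z^d$ by finite cubical subcomplexes $\Lambda_1 \subset \Lambda_2 \subset \cdots$, adding at each stage cocycles that vanish on the previous box. Let $r_n \colon Z^{i-1}\paren{P\paren{\omega};\,\F} \to Z^{i-1}\paren{P\paren{\omega} \cap \Lambda_n;\,\F}$ be the restriction map, set $R_n \coloneqq r_n\paren{Z^{i-1}\paren{P\paren{\omega};\,\F}}$, and define the finite-dimensional subspace
\[K_n \coloneqq \set{g \in R_n : g\paren{\tau} = 0 \text{ for every } (i-1)\text{-cell } \tau \text{ of } \Lambda_{n-1}}.\]
For each $n$ I would pick a basis of $K_n$ and lift every basis element to a global cocycle $\tilde{g} \in Z^{i-1}\paren{P\paren{\omega};\,\F}$ with $r_n\paren{\tilde{g}} = g$; such a lift always exists by the very definition of $R_n$. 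Taking $\mathcal{B}$ to be the union over all $n \ge 1$ of all such lifts is the candidate basis.

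The key structural property is that because $g$ vanishes on $\Lambda_{n-1}$, so does its lift $\tilde{g}$. Consequently, for any $(i-1)$-face $\tau$ with $\tau \in \Lambda_m$, no basis element introduced at a stage $n \ge m+1$ is nonzero at $\tau$, and the total count of basis elements with $\tilde{g}\paren{\tau} \neq 0$ is at most $\sum_{n \le m} \dim K_n < \infty$. This is exactly the local finiteness required. Linear independence follows by a ``lowest-stage'' argument: restricting a hypothetical finite dependency to $\Lambda_{n_0}$, where $n_0$ is the smallest stage with a nonzero coefficient, annihilates all higher-stage contributions (because their lifts vanish on $\Lambda_{n_0}\subseteq \Lambda_{n-1}$) and reduces the dependency to a nontrivial combination of the chosen basis of $K_{n_0}$ vanishing, a contradiction.

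The main obstacle is verifying that $\mathcal{B}$ actually spans $Z^{i-1}\paren{P\paren{\omega};\,\F}$ in the appropriate sense, since a given cocycle $f$ will typically not be a finite combination of elements of $\mathcal{B}$. One constructs coefficients iteratively: expand $f|_{\Lambda_1}$ in the chosen basis of $K_1 = R_1$, subtract the corresponding lifts, expand the remainder's restriction to $\Lambda_2$ in the basis of $K_2$, and continue. After stage $m$ the remainder vanishes on $\Lambda_m$, and since lifts introduced at stages $n \ge m+1$ also vanish on $\Lambda_m$, the partial sum already agrees with $f$ on every $(i-1)$-cell of $\Lambda_m$. The formal expansion $f = \sum_{n,j} c_{n,j}\,\tilde{g}^{(n)}_j$ is therefore pointwise exact at every face in spite of being a formally infinite sum — which is precisely the coordinate structure needed in Section~\ref{sec:potts_infinitevolume} to sample a random cocycle of $P\paren{\omega}$ consistently by assigning independent $\F$-valued coefficients to the elements of $\mathcal{B}$.
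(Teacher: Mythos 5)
Your argument is correct, and it is a careful elaboration of the paper's one-line proof, which reads in full ``Take any basis and perform Gaussian elimination.'' In the infinite-dimensional space $Z^{i-1}\paren{P\paren{\omega};\,\F}$ a naive Gaussian elimination is not well posed (it can require infinitely many row operations at a single face, or fail to stabilize), and your exhaustion-by-boxes scheme is precisely the kind of bookkeeping needed to make that phrase rigorous: at stage $n$ you only ever work inside the finite-dimensional space $R_n$, the lifts automatically vanish on $\Lambda_{n-1}$ because $r_n\paren{\tilde g} = g$ already does, and the local finiteness at a fixed face $\tau \in \Lambda_m$ is immediate since every later lift vanishes on $\Lambda_m$. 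You also correctly flag the subtlety that the resulting $\mathcal B$ is not a Hamel basis: a general cocycle is only a pointwise-convergent, formally infinite combination of elements of $\mathcal B$, with uniqueness by the same lowest-stage argument. This is the notion the Corollary in Section~\ref{sec:potts_infinitevolume} actually needs, since the random cocycle there is the sum over all of $\mathcal B$ of i.i.d.\ coefficients, which is well defined exactly because of the finite-support condition.

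One small streamlining: you can eliminate one face at a time rather than one box at a time. Enumerate the $(i-1)$-faces as $\tau_1,\tau_2,\ldots$, set $V_n \coloneqq \set{f \in Z^{i-1}\paren{P\paren{\omega};\,\F} : f\paren{\tau_j}=0 \text{ for all } j<n}$, note that $V_n/V_{n+1}$ injects into $\F$ via evaluation at $\tau_n$, and whenever $V_n \supsetneq V_{n+1}$ choose $g_n \in V_n$ with $g_n\paren{\tau_n}\neq 0.$ Then $g_n\paren{\tau_m}=0$ for all $n>m$, so each face meets the support of only finitely many $g_n$, and independence plus pointwise spanning follow by the same lowest-index peeling you use. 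This avoids introducing the subspaces $K_n$ and choosing bases of them, but it is otherwise the same argument; the two are interchangeable.
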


\begin{proof}
Take any basis and perform Gaussian elimination.
\end{proof}

We will call the minimal such basis with respect to lexicographical order $\mathcal{B}_{\omega}$
\begin{Corollary}
    Let $p \in [0,1), q \in \set{2,3,\ldots},$ and $p = 1-e^{-\beta}.$
    \begin{enumerate}[label=(\alph*)]
        \item Let $\omega$ be distributed according to $\mu^{\mathbf{f}}_{\Z^d}.$ Conditional on $\omega,$ let $\set{A_g : g \in \mathcal{B}_{\omega}}$ be i.i.d. $\mathrm{Unif}\paren{\F_q}$ random variables. Then the limit 
        \[\nu^{\mathbf{f}} \coloneqq \lim_{n \to \infty} \nu^{\mathbf{f}}_{\Lambda_n}\]
        exists, and the random cocycle 
        \[f = \sum_{g \in \mathcal{B}} A_g g\]
        is distributed according to $\nu^{\mathbf{f}}.$
        \item Let $f$ be distributed according to $\nu^{\mathbf{f}}.$ Conditional on $f,$ let $\omega$ be a random configuration in which each $i$-plaquette $\sigma$ is open with probability $p$ if $f\paren{\partial \sigma} = 0$ independent of the states of other plaquettes and closed otherwise. Then $\omega$ is distributed according to $\mu^{\mathbf{f}}_{\Z^d}.$
\end{enumerate}
\end{Corollary}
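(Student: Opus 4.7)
The plan is to construct the desired coupling $\kappa^{\mathbf{f}}$ on pairs $\paren{f,\omega}$ in infinite volume using the sampling prescription from part~(a), then verify weak convergence of $\nu^{\mathbf{f}}_{\Lambda_n}$ to its $f$-marginal, and deduce part~(b) from the symmetric finite-volume conditional of Proposition~\ref{prop:conditionalmeasures}. Concretely, sample $\omega\sim\mu^{\mathbf{f}}_{\Z^d}$ from the preceding proposition, apply Lemma~\ref{lemma:finitesupport} to obtain the locally-finite basis $\mathcal{B}_\omega$ of $Z^{i-1}\paren{P\paren{\omega};\,\F_q}$, draw $A_g\sim\mathrm{Unif}\paren{\F_q}$ iid for $g\in\mathcal{B}_\omega$, and set $f=\sum_g A_g\,g$. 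The sum is pointwise well-defined by Lemma~\ref{lemma:finitesupport} and $f$ is a cocycle because each $g$ is. Denote the joint law of $\paren{f,\omega}$ by $\kappa^{\mathbf{f}}$ and its $f$-marginal by $\nu^{\mathbf{f}}.$

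To show $\nu^{\mathbf{f}}_{\Lambda_n}\to\nu^{\mathbf{f}}$ weakly, it suffices to compute cylinder-event probabilities. For a finite set $R$ of $\paren{i-1}$-cells and $h\in C^{i-1}\paren{R;\,\F_q}$, Proposition~\ref{prop:conditionalmeasures} together with the change-of-variables to iid uniform basis coefficients give
\[\nu^{\mathbf{f}}_{\Lambda_n}\paren{f|_R=h}=\mathbb{E}_{\mu^{\mathbf{f}}_{\Lambda_n}}\brac{\Phi_R\paren{\omega}}, \qquad \nu^{\mathbf{f}}\paren{f|_R=h}=\mathbb{E}_{\mu^{\mathbf{f}}_{\Z^d}}\brac{\Phi_R\paren{\omega}},\]
where $\Phi_R\paren{\omega}=\abs{\F_q}^{-\dim V_R\paren{\omega}}\mathbbm{1}\set{h\in V_R\paren{\omega}}$ and $V_R\paren{\omega}$ is the image of the restriction map $Z^{i-1}\paren{P\paren{\omega};\,\F_q}\to C^{i-1}\paren{R;\,\F_q}$. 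Since adding open plaquettes can only impose additional cocycle constraints, $V_R$ is a decreasing intersection of its finite-volume analogues $V_R^{\paren{m}}\paren{\omega}\coloneqq V_R\paren{P\paren{\omega}\cap\Lambda_m}$, and lives in the finite-dimensional space $C^{i-1}\paren{R;\,\F_q}$; thus $\Phi_R$ is the monotone $\omega$-a.s. pointwise limit of local cylinder functions $\Phi_R^{\paren{m}}$. Combining this with the weak convergence $\mu^{\mathbf{f}}_{\Lambda_n}\to\mu^{\mathbf{f}}_{\Z^d}$ from the preceding proposition and a standard $\varepsilon/3$ argument yields the required convergence.

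Part~(b) is then immediate: the conditional $\omega\mid f$ under $\kappa_{\Lambda_n}$ is Bernoulli-$p$ on $\set{\sigma:\delta f\paren{\sigma}=0}$ by Proposition~\ref{prop:conditionalmeasures}, which is manifestly a local prescription in $f$ and therefore passes to the weak limit. Hence if $f\sim\nu^{\mathbf{f}}$ and $\omega$ is sampled from $f$ by the prescribed Bernoulli rule, the joint law of $\paren{f,\omega}$ agrees with $\kappa^{\mathbf{f}}$, whose $\omega$-marginal is $\mu^{\mathbf{f}}_{\Z^d}$ by construction.

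The main obstacle will be making the local-approximation step in the first display quantitative: although $V_R\paren{\omega}$ stabilizes $\omega$-a.s.~as more plaquettes are revealed, the stabilization scale is itself a random function of $\omega$, so one must push the error bounds through the weak limit uniformly in $n$. This is the natural higher-codimension generalization of the argument in Chapter~4 of~\cite{grimmett2006random}, where the connectivity event between two vertices plays the role of extendability of $h$ to a global cocycle; the key input in both cases is the tail-triviality of $\mu^{\mathbf{f}}_{\Z^d}$ established in the preceding proposition, which forces $\dim V_R\paren{\omega}$ to have an almost-sure value under the limit measure.
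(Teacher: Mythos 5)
Your approach is genuinely different from the paper's: you try to prove the weak convergence $\nu^{\mathbf{f}}_{\Lambda_n}\to\nu^{\mathbf{f}}$ directly by computing cylinder probabilities and running an $\varepsilon/3$ argument, whereas the paper works with an explicit almost-sure coupling. The formula $\nu^{\mathbf{f}}_{\Lambda_n}\paren{f|_R=h}=\mathbb{E}_{\mu^{\mathbf{f}}_{\Lambda_n}}\brac{\Phi_R^{(n)}}$ is correct (it follows from Proposition~\ref{prop:conditionalmeasures} by pushing the uniform measure on $Z^{i-1}\paren{P\paren{\omega};\,\F_q}$ forward under restriction), but the obstacle you flag at the end is not a technicality to be cleaned up later --- it is exactly where the argument breaks. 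In the decomposition of $\abs{\mathbb{E}_{\mu^{\mathbf{f}}_{\Lambda_n}}\brac{\Phi_R^{(n)}}-\mathbb{E}_{\mu^{\mathbf{f}}_{\Z^d}}\brac{\Phi_R}}$ into three terms, the middle term goes to $0$ by weak convergence and the last by dominated convergence, but the first term $\abs{\mathbb{E}_{\mu^{\mathbf{f}}_{\Lambda_n}}\brac{\Phi_R^{(n)}-\Phi_R^{(m)}}}$ needs a bound that is uniform in $n\geq m$ and vanishes as $m\to\infty$. The event $\set{\Phi_R^{(n)}\neq\Phi_R^{(m)}}=\set{V_R^{(m)}\supsetneq V_R^{(n)}}$ is not monotone in $\omega$ (opening a plaquette inside $\Lambda_m$ can shrink $V_R^{(m)}$ down to $V_R^{(n)}$ and make the event fail), so stochastic domination of $\mu^{\mathbf{f}}_{\Lambda_n}$ by $\mu^{\mathbf{f}}_{\Z^d}$ does not transfer the bound. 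Tail-triviality, which you cite as the key input, controls the $\mu^{\mathbf{f}}_{\Z^d}$-a.s.\ stabilization of $V_R^{(m)}$ but says nothing uniform about the finite-volume laws, so it does not close the gap either.

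The paper sidesteps all of this. It builds the monotone coupling $\paren{\omega_n}_n$ of the measures $\mu^{\mathbf{f}}_{\Lambda_n}$ from Theorem 4.19 of~\cite{grimmett2006random} (so $\omega_n\paren{\sigma}$ stabilizes a.s.\ for each fixed $\sigma$), fixes one iid family $\set{A_g}$ indexed by all of $C^{i-1}\paren{\Z^d;\,\F_q}$, and defines $f_n=\sum_{g\in\mathcal{B}_{\omega_n}}A_g g$ using the lexicographically minimal locally-finite basis $\mathcal{B}_{\omega_n}$ from Lemma~\ref{lemma:finitesupport}; the stabilization $\mathcal{B}_{\omega_n}\to\mathcal{B}_\omega$ then gives a.s.\ pointwise convergence $f_n\to f$, which is strictly stronger than what any $\varepsilon/3$ argument would deliver. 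Part~(b) is handled the same way (sharing the Bernoulli variables $B_\sigma$ across $n$), rather than by appealing to ``conditionals passing to the weak limit,'' which is also not automatic and would require its own argument. To make your route rigorous you would essentially have to introduce the monotone coupling to get a.s.\ stabilization of $V_R^{(m)}$, at which point you have reproduced the paper's proof.
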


\begin{proof}
We proceed similarly to proof of Theorem 4.91 of~\cite{grimmett2006random}.

(a) By the proof of Theorem 4.19 of~\cite{grimmett2006random}, there is an increasing set of configurations $\omega_n$ so that each $\omega_n$ is distributed according to $\mu_{\Lambda_n}^{\mathbf{f}}$ and $\lim_{n \to \infty} \omega_n$ is distributed according to $\mu_{\Z^d}^{\mathbf{f}}.$ Moreover, for any $i$-plaquette $\sigma,$ $\omega_n\paren{\sigma} = \omega\paren{\sigma}$ for large enough $n.$

Now let $\set{A_g : g \in C^{i-1}\paren{\Z^d;\,\F_q}}$ be i.i.d. $\mathrm{Unif}\paren{\F_q}$ random variables, and let 
\[f_n \coloneqq \sum_{g \in \mathcal{B}_{\omega_n}} A_g g\,.\]
By the construction of $\mathcal{B}_{\omega_n},$ $f_n$ is distributed according to $\nu^{\mathbf{f}}$ and is eventually constant on any finite set of $(i-1)$-cubes. Since $\mathcal{B}_{\omega_n} \to \mathcal{B}_{\omega},$ $\lim_{n \to \infty} f_n$ is distributed as $\nu^{\mathbf{f}}$ and we are done.

(b) Let $\set{B_{\sigma} : \sigma \in F^i}$ be independent $\mathrm{Ber}\paren{p}$ random variables. Also, let $f_n$ be distributed as $\nu^{\mathbf{f}}_{\Lambda_n},$ coupled as before so that $f_n$ is eventually constant on any finite set of $(i-1)$-cubes. Then, Proposition~\ref{prop:conditionalmeasures}, $\omega_n\paren{\sigma}\coloneqq B_{\sigma} K\paren{f_n\paren{\partial \sigma},1}$ is distributed according to $\mu_{\Lambda_n}^{\mathbf{f}}.$ Therefore $\omega \coloneqq \lim_{n \to \infty}\omega_n$ is distributed according to $\mu_{\Z^d}^{\mathbf{f}} = \lim_{n \to \infty} \mu_{\Lambda_n}^{\mathbf{f}}.$
\end{proof}

In the remainder of this section, we will refer to ``Potts lattice gauge theory on $\Z^d$'' and ``the plaquette random-cluster model on $\Z^d$'' without explicit reference to the boundary conditions used to construct the infinite volume limit. Note that the constants defined below may depend on this choice.

\subsection{Duality for Potts Lattice Gauge Theory}
Another corollary of the coupling between Potts lattice gauge theory and the plaquette random-cluster model is a new proof of duality for the partition functions of Potts lattice gauge theory. This duality was first observed in the paper that introduced Potts lattice gauge theory~\cite{kogut1980z}. 

\begin{Corollary}
Let $\mathcal{Z}\paren{\T^d_N,q,\beta,i}$ be the normalizing constant for $q$-state $i-1$-dimensional Potts lattice gauge theory on $\T^d_N.$ Then, there are constants  $0<\newconstant\label{const:Z1},\newconstant\label{const:Z2}<\infty$ so that   
\[\oldconstant{const:Z1} \mathcal{Z}\paren{\T^d_N,q,\beta,i} \leq \mathcal{Z}\paren{\T^d_N,q,\beta^*,d-i} \leq \oldconstant{const:Z2}  \mathcal{Z}\paren{\T^d_N,q,\beta,i}\]
where
\[\beta^*=\beta^*\paren{\beta,q}=\log\paren{\frac{e^{\beta}+q-1}{e^{\beta}-1}}\,.\]
\end{Corollary}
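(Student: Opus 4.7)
The plan is to use the Edwards--Sokal-type coupling of Proposition~\ref{prop:coupling} to transfer the question about the Potts partition functions $\mathcal{Z}\paren{\T^d_N,q,\beta,i}$ and $\mathcal{Z}\paren{\T^d_N,q,\beta^*,d-i}$ into one about the plaquette random-cluster normalizing constants $Z\paren{\T^d_N,p,q,i}$ and $Z\paren{\T^d_N,p^*,q,d-i}$ (where $p=1-e^{-\beta}$), which are already comparable via Corollary~\ref{corollary:normalizing}.

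First I would sum the joint measure $\kappa$ from Proposition~\ref{prop:coupling} in each of the two possible orders. Summing $\kappa\paren{f,\omega}$ over $\omega$ first gives a multiple of the Potts partition function $\mathcal{Z}\paren{\T^d_N,q,\beta,i}$, while summing over $f$ first gives a multiple of $Z\paren{\T^d_N,p,q,i}$ with an extra factor of $q^{\dim B^{i-1}\paren{\T^d_N;\,\F_q}}$ coming from the fixed $\paren{i-1}$-skeleton. Equating the two yields an explicit identity of the form $\mathcal{Z}\paren{\T^d_N,q,\beta,i}=A\paren{\beta,q,N,i,d}\cdot Z\paren{\T^d_N,p,q,i}$ for an explicit $A$. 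I would then apply this identity a second time with $\paren{i,\beta}$ replaced by $\paren{d-i,\beta^*}$, after first verifying by direct substitution (using the defining formula for $\beta^*$) that $1-e^{-\beta^*}=\paren{1-p}q/\paren{\paren{1-p}q+p}$ matches the random-cluster dual parameter, so that $p^*$ has a consistent meaning on both sides.

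Next, I would invoke Corollary~\ref{corollary:normalizing} to rewrite $Z\paren{\T^d_N,p^*,q,d-i}$ as an explicit multiple of $Z\paren{\T^d_N,p,q,i}$. Combining with the two coupling identities yields a closed-form expression for the ratio $\mathcal{Z}\paren{\T^d_N,q,\beta^*,d-i}/\mathcal{Z}\paren{\T^d_N,q,\beta,i}$ in terms of exponentials in $\beta$ and $\beta^*$, a factor of $\paren{1-p}^{\abs{F^i_N}}$, and a power of $q$ depending on $N,i,d$ through the boundary-cohomology dimensions and the Euler--Poincar\'{e} constant $\oldconstant{const:5}$ from Proposition~\ref{prop:c6}.

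The main obstacle is the bookkeeping in this last step: one must verify, using $\paren{1-p}=e^{-\beta}$, $\abs{F^i_N}=\abs{F^{d-i}_N}$, and the defining identity $e^{\beta^*}\paren{e^\beta-1}=e^\beta+q-1$, that the various exponential and combinatorial factors combine into a quantity bounded above and below by positive constants, which can then be taken as $\oldconstant{const:Z1}$ and $\oldconstant{const:Z2}$. The delicate point is reconciling the $\paren{1-p^*}^{\abs{F^i_N}}$ and $\paren{1-p}^{\abs{F^i_N}}$ factors produced by the coupling on the two sides with the $\paren{1-p}^{\abs{F^i_N}}$ factor arising in Corollary~\ref{corollary:normalizing}, so that the surviving prefactor depends only on $\beta,q,i,d$ (modulo at worst a bounded power of $q$ controlled by the ranges of $b_i$ and $b^\bullet_{d-i}$, each at most $\binom{d}{i}$).
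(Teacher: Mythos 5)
Your proposal takes essentially the same route as the paper's (extremely terse) proof: it combines the two orders of summation of the Edwards--Sokal coupling from Proposition~\ref{prop:coupling} to relate the Potts and random-cluster normalizing constants, checks that $1-e^{-\beta^*}=p^*$, invokes Corollary~\ref{corollary:normalizing}, and absorbs the $\paren{\sqrt q}^{-b_i}$ discrepancy between the balanced and unbalanced models into the bounded constants $\oldconstant{const:Z1},\oldconstant{const:Z2}$. The one place where you are taking something on faith is the final claim that the surviving exponential prefactors (the $e^{\beta^*\abs{F^{d-i}_N}}$, $e^{-\beta\abs{F^i_N}}$, and $\paren{1-p}^{\abs{F^i_N}}$-type factors) combine into a quantity bounded independently of $N$; this is exactly the ``delicate point'' you flag, and it is precisely as unresolved in the paper's proof sketch as it is in yours, so you should carry out that bookkeeping explicitly rather than assert it.
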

\begin{proof}
This follows from Corollary~\ref{corollary:normalizing}, the relationship between the balanced and unbalanced plaquette random cluster models on the torus, and the computations in the proof of Proposition~\ref{prop:coupling}. 
\end{proof}
Note that the proofs can be adapted to relate the partition function for $i-1$-dimensional  Potts lattice gauge theory on a box in $\Z^d$ with free boundary conditions to that of  $(d-i-1)$-dimensional  Potts lattice gauge theory on a box in $\Z^d$ with wired boundary conditions.

\subsection{Generalized Wilson Loop Variables and Proof of Theorem~\ref{thm:comparison}}

In this section, let $X$ either be a finite cubical complex or $\Z^d.$ Let $\gamma\in Z_{i-1}\paren{X,F}$ be an $(i-1)$-dimensional cycle. It is natural to ask whether $\gamma$ is a boundary in the $i$-dimensional plaquette random-cluster model.

\begin{Definition}
Let $V_{\gamma}$ to be the event that $\gamma$ is null-homologous in $H_i\paren{P;\,\F}$ for a specified coefficient field $\F.$
\end{Definition}

The generalized Wilson loop variable is a closely related quantity for Potts lattice gauge theory.

\begin{Definition}
Let $f$ be an $(i-1)$-cocycle. Recall that the \emph{generalized Wilson loop variable} associated to $\gamma$ is the random variable $W_{\gamma}:C^{i-1}\paren{X;\,\F_q}\rightarrow \C$ given by 
\[W_{\gamma}\paren{f}=\paren{f\paren{\gamma}}^{\mathbb{C}}\,.\]
\end{Definition}

We are particularly interested in the behavior of these random variables when $\gamma$ is an $(i-1)$-dimensional hyperrectangular boundary in $\Z^d$ --- that is, the boundary of a hyperrectangle  $\brac{0,n_1}\times \ldots \times \brac{0,n_{i}} \times \mathbf{x}$ inside $\R^{i}$ where $n_j\in \Z$ for $j=1,\ldots,i$ and $\mathbf{x}\in \Z^{d-i},$  or a congruent object obtained translation and/or by permuting the coordinates of $\Z^d.$ The next several results show a close connection between Wilson loop variables and the topology of the plaquette random-cluster model. 

We first show that Wilson loop variables are perfectly correlated if they are homologous, and are otherwise independent. For completeness, we begin by proving an elementary topological lemma which will be useful in a couple places.
\begin{Lemma}
\label{lemma:constclass}
Let $\gamma \in Z_{i-1}\paren{X;\,\F_q}$ and  $f \in Z^{i-1}\paren{X;\,\F_q}.$ Then for any $f' \in Z^{i-1}\paren{X;\,\F_q}$ so that $\brac{f} = \brac{f'}$ in $H^{i-1}\paren{X;\,\F_q}$ and any $\gamma' \in Z_{i-1}\paren{X;\,\F_q}$ so that $\brac{\gamma} = \brac{\gamma'}$ in $H_{i-1}\paren{X;\,\F_q},$ we have $f\paren{\gamma} = f'\paren{\gamma'}.$
\end{Lemma}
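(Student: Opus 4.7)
The plan is a direct unpacking of the definitions, using the defining relation $\delta g(\alpha)=g(\partial\alpha)$ together with the cycle and cocycle conditions. The only thing to observe is that the difference $f'(\gamma')-f(\gamma)$ splits as a sum of three terms each of which vanishes for a structural reason.

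Concretely, since $[f]=[f']$ and $[\gamma]=[\gamma']$, I would write $f'=f+\delta g$ for some $g\in C^{i-2}(X;\F_q)$ and $\gamma'=\gamma+\partial\rho$ for some $\rho\in C_i(X;\F_q)$. Then
\[
f'(\gamma')=f(\gamma)+f(\partial\rho)+(\delta g)(\gamma)+(\delta g)(\partial\rho).
\]
By the definition of the coboundary, $f(\partial\rho)=(\delta f)(\rho)=0$ because $f$ is a cocycle, and $(\delta g)(\gamma)=g(\partial\gamma)=0$ because $\gamma$ is a cycle. The last term is $(\delta g)(\partial\rho)=g(\partial\partial\rho)=0$ by $\partial\circ\partial=0$. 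Hence $f'(\gamma')=f(\gamma)$.

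There is no real obstacle here; the lemma is essentially the well-definedness of the evaluation pairing $H^{i-1}(X;\F_q)\otimes H_{i-1}(X;\F_q)\to\F_q$, and I am just making the representative-independence explicit. The only point to be careful about is keeping track of which of the two factors each vanishing identity comes from (cocycle, cycle, or $\partial^2=0$), and the four-term expansion above makes that transparent.
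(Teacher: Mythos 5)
Your proof is correct and follows essentially the same route as the paper's: both establish well-definedness of the evaluation pairing by observing that the correction terms vanish because $f$ is a cocycle, $\gamma$ is a cycle, and $\partial\circ\partial=0$. The paper splits the argument into two steps (first replacing $f$ by $f'$, then $\gamma$ by $\gamma'$) and treats $i=1$ separately since there are no $(i-2)$-cochains in that case, whereas your single four-term expansion handles everything at once, with $i=1$ covered implicitly by the convention $C^{-1}\paren{X;\,\F_q}=0$.
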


\begin{proof}
First we show that $f\paren{\gamma} = f'\paren{\gamma}.$ If $i=1,$ $f=f'$ and there is nothing to prove. Now, assume $i \geq 2.$ For $h \in F^{i-2}_N,$ let $h^*\in C^{i-2}\paren{X;\,\F_q}$ be the dual of $h$ when $h$ is viewed as an element of  $C_{i-2}\paren{X;\,\F_q}.$  Note that since $\gamma$ is a cycle, 
\[\delta h^*\paren{\gamma} = h^*\paren{\partial \gamma}= 0\,.\]
Then, because $f$ and $f'$ are cohomologous,  we can write 
\[f-f' = \sum_{h \in X^{i-2}} c_h \delta h^*\]
for some $\set{c_h} \in \F_q^{X^{i-2}}.$ It follows that
\[f\paren{\gamma} = f'\paren{\gamma}  + \sum_{h \in X^{i-2}} c_h \delta h^*\paren{\gamma} = f'\paren{\gamma}\,.\]

Now as $\brac{\gamma} = \brac{\gamma'},$ there is a chain $\rho \in C_i\paren{X;\,\F_q}$ so that $\gamma = \partial \rho + \gamma'.$ Then since $f'$ is a cocycle, we can calculate
\begin{align*}
    f'\paren{\gamma} &= f'\paren{\partial \rho} + f'\paren{\gamma'}\\
    &= \delta f'\paren{\rho} + f'\paren{\gamma'}\\
    &= f'\paren{\gamma'}\,.
\end{align*}
\end{proof}

We now describe the distribution of Wilson loops as a function of the associated plaquette random-cluster model.
\begin{Proposition}
\label{prop:wilsondist}
Let $q$ be a prime integer and let $\gamma \in Z_{i-1}\paren{X;\,\F_q}.$ Then 
\[\paren{W_\gamma \mid V_{\gamma}} \equiv  1\]
and
\[\paren{W_\gamma \mid V_{\gamma}^c} \sim  \mathrm{Unif}\paren{\F_q}^{\mathbb{C}}\,.\]
\end{Proposition}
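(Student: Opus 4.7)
The plan is to condition on the plaquette complex $P$ and exploit the Edwards--Sokal-type coupling $\kappa$ from Proposition~\ref{prop:coupling}. By Proposition~\ref{prop:conditionalmeasures}, conditional on $P$ the cochain $f$ is uniform on the $\F_q$-vector space $Z^{i-1}\paren{P;\,\F_q}$. Since the event $V_\gamma$ is measurable with respect to $P$, it suffices to show that for $\mu$-almost every $P$, the evaluation random variable $f\paren{\gamma}$ is identically $0$ on $V_\gamma$ and uniform on $\F_q$ on $V_\gamma^c$; the full claim then follows by averaging over $P$ (separately on the two events).

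Consider the $\F_q$-linear evaluation map $\mathrm{ev}_\gamma : Z^{i-1}\paren{P;\,\F_q} \to \F_q$, $f \mapsto f\paren{\gamma}$. The key algebraic fact I would prove is
\[
\mathrm{ev}_\gamma \equiv 0 \iff \gamma \in B_{i-1}\paren{P;\,\F_q}.
\]
The $(\Leftarrow)$ direction is immediate: if $\gamma = \partial \rho$ with $\rho \in C_i\paren{P;\,\F_q}$ then $f\paren{\gamma} = f\paren{\partial\rho} = \delta f\paren{\rho} = 0$ for every cocycle $f$. For $(\Rightarrow)$ I would use that $\F_q$ is a field to split $Z_{i-1}\paren{P;\,\F_q} = B_{i-1}\paren{P;\,\F_q} \oplus U$ with $U \cong H_{i-1}\paren{P;\,\F_q}$. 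Writing $\gamma = b+u$, the hypothesis $\brac{\gamma} \neq 0$ gives $u \neq 0$; pick any linear functional $\tilde{f}$ on $Z_{i-1}\paren{P;\,\F_q}$ that vanishes on $B_{i-1}\paren{P;\,\F_q}$ with $\tilde f\paren{u}\neq 0$, and extend it arbitrarily to $f \in C^{i-1}\paren{P;\,\F_q}$. Then $\delta f\paren{\sigma} = f\paren{\partial\sigma} = \tilde f\paren{\partial\sigma} = 0$ for every $i$-cell $\sigma \in P$, so $f \in Z^{i-1}\paren{P;\,\F_q}$ and $f\paren{\gamma} = \tilde f\paren{u} \neq 0$.

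Combining these: on $V_\gamma$ the map $\mathrm{ev}_\gamma$ is zero, so $W_\gamma\paren{f} = 1^\C = 1$ deterministically; on $V_\gamma^c$ the map $\mathrm{ev}_\gamma$ is a nonzero linear map onto the one-dimensional $\F_q$-vector space $\F_q$, hence surjective, with all fibers of equal cardinality. Since $f$ is uniform on $Z^{i-1}\paren{P;\,\F_q}$, its image $f\paren{\gamma}$ is uniform on $\F_q$, so $W_\gamma = f\paren{\gamma}^\C \sim \mathrm{Unif}\paren{\F_q}^\C$. Averaging the two conditional statements over $P$ yields the proposition.

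The only technical nuance, and the step I would treat as the main obstacle, is the construction of a cocycle $f$ with prescribed value on $\gamma$ when $\brac{\gamma} \neq 0$; this is where primality of $q$ enters through the splitting argument above (the universal coefficient theorem for cohomology from Section~\ref{sec:hom} gives the same conclusion, but a direct splitting is cleaner). For $X = \Z^d$, no new idea is needed: since $\gamma$ has finite support, everything can be checked on a large finite subcomplex, and the infinite-volume coupling from Section~\ref{sec:potts_infinitevolume} transfers the statement from $\Lambda_n$ to $\Z^d$.
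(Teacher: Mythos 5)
Your proof is correct and follows the same overall strategy as the paper: condition on the plaquette complex $P$, use Proposition~\ref{prop:conditionalmeasures} to reduce the claim to a statement about the pushforward of the uniform measure on $Z^{i-1}\paren{P;\,\F_q}$ under the evaluation map $\mathrm{ev}_\gamma$, and treat the events $V_\gamma$ and $V_\gamma^c$ separately. Where you genuinely differ is in establishing that $\mathrm{ev}_\gamma$ is nonzero on $Z^{i-1}\paren{P;\,\F_q}$ whenever $\brac{\gamma}\neq 0$ in $H_{i-1}\paren{P;\,\F_q}$: the paper invokes the universal coefficient theorem for cohomology to produce a dual class $\brac{\gamma}^*\in H^{i-1}\paren{P;\,\F_q}$ pairing nontrivially with $\brac{\gamma}$, and then concludes by noting that $f\paren{\gamma}$ is uniform on a nonzero additive subgroup of $\F_q$, which must be all of $\F_q$ since $q$ is prime. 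You instead split $Z_{i-1}\paren{P;\,\F_q}=B_{i-1}\paren{P;\,\F_q}\oplus U$ over the field $\F_q$, build a functional $\tilde f$ vanishing on $B_{i-1}$ but not on the $U$-component of $\gamma$, and observe that any extension of $\tilde f$ to $C^{i-1}$ is automatically a cocycle because $\delta f\paren{\sigma}=\tilde f\paren{\partial\sigma}=0$; then you finish by the equidistribution of fibers of a surjective linear map. Your argument is more elementary and more self-contained, and you are more explicit about the $X=\Z^d$ case via the finite-support basis $\mathcal{B}_\omega$ from Section~\ref{sec:potts_infinitevolume}, which the paper's proof leaves implicit. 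Both proofs ultimately rely on $\F_q$ being a field (hence $q$ prime) --- you for the existence of the complement $U$, the paper both for the UCT and for the classification of additive subgroups of $\F_q$.
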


\begin{proof}
Fix $\omega$ and $\gamma.$ If $0 = \brac{\gamma} \in H_{i-1}\paren{P;\,\F_q},$ then $f\paren{\gamma} = 0$ by Lemma~\ref{lemma:constclass}.

Now assume that $0 \neq \brac{\gamma} \in H_{i-1}\paren{P;\,\F_q}.$ By Proposition~\ref{prop:conditionalmeasures}, $\nu\paren{f \mid \omega}$ can be sampled by fixing a basis of $H^{i-1}\paren{P}$ and taking a random linear combination with independent $\mathrm{Unif}\paren{\F_q}$ coefficients. Thus, $f\paren{\gamma}$ is uniformly distributed on an additive subgroup of $\F_q.$ Since the only such subgroups are $\F_q$ and $\set{0},$ we only need to rule out the latter. By the universal coefficient theorem for cohomology (Corollary 3.3 of~\cite{hatcher2002algebraic}), there is a dual element $0 \neq \brac{\gamma}^* \in H^{i-1}\paren{P;\,\F_q}$ so that $\brac{\gamma}^*\paren{\brac{\gamma}} \neq 0.$ Therefore, $f\paren{\gamma}$ is distributed as $\mathrm{Unif}\paren{\F_q},$ so $W_{\gamma}$ is distributed as $\mathrm{Unif}\paren{\F_q}^{\mathbb{C}}.$
\end{proof}

The proposition has a few interesting corollaries.

\begin{Corollary}
\label{cor:VW}
Let $\gamma \in Z_{i-1}\paren{X;\,\F_q}.$
\[\mathbb{E}_{\nu}\paren{W_\gamma \mid V_\gamma} = 1\]
and
\[\mathbb{E}_{\nu}\paren{W_\gamma \mid V_\gamma^c} = 0\,.\]
In particular, if $V_\gamma$ is the event that $\gamma$ is null-homologous in $P$ then
\[\mathbb{E}_{\nu}\paren{W_{\gamma}}=\mu_{X}\paren{V_{\gamma}}\,.\]
\end{Corollary}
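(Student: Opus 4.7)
The plan is to obtain the corollary as a nearly immediate consequence of Proposition~\ref{prop:wilsondist}, using only the coupling $\kappa$ of Proposition~\ref{prop:coupling} and a standard identity for roots of unity. Both conditional expectations reduce to mean computations of very simple random variables, after which the marginal expectation is assembled by the law of total expectation.

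First I would compute $\mathbb{E}_\nu(W_\gamma \mid V_\gamma)$. By Proposition~\ref{prop:wilsondist}, $W_\gamma \equiv 1$ on the event $V_\gamma$, so the conditional expectation is trivially $1$. Next I would compute $\mathbb{E}_\nu(W_\gamma \mid V_\gamma^c)$. By the same proposition, on $V_\gamma^c$ the random variable $W_\gamma$ is distributed as $\mathrm{Unif}(\F_q)^{\mathbb{C}}$, i.e.\ as a uniform $q$-th root of unity in $\C$. Hence
\[\mathbb{E}_\nu(W_\gamma \mid V_\gamma^c) = \frac{1}{q}\sum_{k=0}^{q-1} e^{2\pi i k/q} = 0,\]
since the sum of all $q$-th roots of unity vanishes (for $q \geq 2$).

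For the final identity, I would view the Wilson loop variable under the joint coupling $\kappa$ provided by Proposition~\ref{prop:coupling}: $\nu$ is its first marginal and $\mu_X$ its second, and $V_\gamma$ is an event determined entirely by the second marginal $P$. Applying the law of total expectation to $\kappa$ and conditioning on whether $V_\gamma$ occurs gives
\[\mathbb{E}_\nu(W_\gamma) = \mathbb{E}_\nu(W_\gamma \mid V_\gamma)\,\mu_X(V_\gamma) + \mathbb{E}_\nu(W_\gamma \mid V_\gamma^c)\,\mu_X(V_\gamma^c) = 1 \cdot \mu_X(V_\gamma) + 0 \cdot \mu_X(V_\gamma^c) = \mu_X(V_\gamma),\]
which is the desired conclusion.

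There is essentially no obstacle: the heavy lifting — identifying the exact conditional law of $W_\gamma$ given $P$ — was done in Proposition~\ref{prop:wilsondist}, which in turn relied on Lemma~\ref{lemma:constclass} and the uniform conditional measure on cocycles from Proposition~\ref{prop:conditionalmeasures}. The only arithmetic input needed here is the vanishing of the sum of $q$-th roots of unity, and the only probabilistic input is the tower property applied through the Edwards--Sokal-type coupling $\kappa$.
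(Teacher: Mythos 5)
Your proof is correct and matches the paper's intent exactly: the paper presents this corollary as an immediate consequence of Proposition~\ref{prop:wilsondist}, and the three steps you supply — the two conditional expectations from the known conditional law of $W_\gamma$, the vanishing sum of $q$-th roots of unity, and the tower property through the coupling $\kappa$ — are precisely the omitted routine details.
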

The last statement is Theorem~\ref{thm:comparison}.

\begin{Corollary}
Let $\gamma,\gamma' \in Z_{i-1}\paren{X;\,\F_q}.$ Then conditioned on 
\[\brac{\gamma} = \brac{\gamma'} \in H_{i-1}\paren{P;\,\F_q}\,,\] $W_{\gamma} = W_{\gamma'}$ almost surely. Conversely, conditioned on 
\[\brac{\gamma} \notin \mathrm{span}\paren{\brac{\gamma'}} \subset H_{i-1}\paren{P;\,\F_q}\,,\]
$W_{\gamma}$ and $W_{\gamma'}$ are independent.
\end{Corollary}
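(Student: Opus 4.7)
The plan is to work conditionally on the random complex $P$, using the coupling from Proposition~\ref{prop:coupling} and the description of $\nu\paren{\cdot \mid P}$ in Proposition~\ref{prop:conditionalmeasures} as the uniform distribution on $Z^{i-1}\paren{P;\,\F_q}$. Since $P$ contains the full $(i-1)$-skeleton of $X$, both $\gamma$ and $\gamma'$ lie in $C_{i-1}\paren{P;\,\F_q}$ automatically, so their homology classes in $H_{i-1}\paren{P;\,\F_q}$ make sense. The desired statements will follow once we understand the pushforward of the uniform measure on $Z^{i-1}\paren{P;\,\F_q}$ under the linear evaluation map $\Phi \colon f \mapsto \paren{f\paren{\gamma}, f\paren{\gamma'}} \in \F_q^2$.

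For the first assertion, suppose $P$ is such that $\brac{\gamma} = \brac{\gamma'}$ in $H_{i-1}\paren{P;\,\F_q}$. Then there exists $\rho \in C_i\paren{P;\,\F_q}$ with $\partial \rho = \gamma - \gamma'$. For any $f \in Z^{i-1}\paren{P;\,\F_q}$ we have $f\paren{\gamma} - f\paren{\gamma'} = f\paren{\partial \rho} = \delta f\paren{\rho} = 0$, since $\rho$ is supported on plaquettes where $\delta f$ vanishes. Hence $W_\gamma = W_{\gamma'}$ pointwise, and in particular almost surely under $\nu\paren{\cdot \mid P}$, which yields the claim after integrating over the conditioning event.

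For the second assertion, suppose $P$ is such that $\brac{\gamma} \notin \mathrm{span}\paren{\brac{\gamma'}}$ in $H_{i-1}\paren{P;\,\F_q}$; in particular $\brac{\gamma}$ and $\brac{\gamma'}$ are $\F_q$-linearly independent. By the universal coefficient theorem (Corollary 3.3 of~\cite{hatcher2002algebraic}), the evaluation pairing identifies $H^{i-1}\paren{P;\,\F_q}$ with $\mathrm{Hom}_{\F_q}\paren{H_{i-1}\paren{P;\,\F_q}, \F_q}$, so we may extend any linear functional on $\mathrm{span}\paren{\brac{\gamma},\brac{\gamma'}}$ to all of $H_{i-1}\paren{P;\,\F_q}$. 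Applied to the dual basis of $\set{\brac{\gamma},\brac{\gamma'}}$, this produces cohomology classes $\brac{f_1}, \brac{f_2} \in H^{i-1}\paren{P;\,\F_q}$ with $\brac{f_1}\paren{\brac{\gamma}} = 1$, $\brac{f_1}\paren{\brac{\gamma'}} = 0$, $\brac{f_2}\paren{\brac{\gamma}} = 0$, and $\brac{f_2}\paren{\brac{\gamma'}} = 1$. By Lemma~\ref{lemma:constclass}, the values $f_j\paren{\gamma}$ and $f_j\paren{\gamma'}$ are invariants of the cohomology class, so any cocycle representatives $f_1, f_2$ satisfy $\Phi\paren{f_1} = \paren{1,0}$ and $\Phi\paren{f_2} = \paren{0,1}$. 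Thus $\Phi$ is surjective onto $\F_q^2$, and the pushforward of the uniform measure on $Z^{i-1}\paren{P;\,\F_q}$ under $\Phi$ is the uniform distribution on $\F_q^2$. Consequently $f\paren{\gamma}$ and $f\paren{\gamma'}$ are independent and each uniform on $\F_q$, making $W_\gamma$ and $W_{\gamma'}$ independent under $\nu\paren{\cdot \mid P}$, and therefore independent after integrating over the conditioning event.

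The main technical point is the surjectivity of $\Phi$ in the second part. I expect this to be the only step requiring real algebraic input, namely the universal coefficient theorem to produce the dual basis; everything else is a direct translation of ``cocycle'' and ``cohomologous'' into statements about $f\paren{\gamma}$ and $f\paren{\gamma'}$ via Lemma~\ref{lemma:constclass}.
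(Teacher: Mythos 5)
Your approach mirrors the paper's: work conditionally on the random complex $P$, use the description from Proposition~\ref{prop:conditionalmeasures} of $\nu\paren{\cdot\mid P}$ as uniform on $Z^{i-1}\paren{P;\,\F_q}$, and invoke the universal coefficient theorem to build dual cocycles. The first part is fine, and arguably more self-contained than the paper's (which defers to Proposition~\ref{prop:wilsondist} applied to $\gamma-\gamma'$; your direct computation $f\paren{\gamma}-f\paren{\gamma'}=\delta f\paren{\rho}=0$ amounts to the same thing). The ``pushforward under a surjective linear map is uniform'' phrasing in the second part is also a clean way to package the independence argument, and integrating over the conditioning event does work because the conditional joint law is the \emph{same} uniform product law for every $\xi$ in the conditioning event.

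However, there is a genuine gap in the second part: from $\brac{\gamma}\notin\mathrm{span}\paren{\brac{\gamma'}}$ you conclude ``in particular $\brac{\gamma}$ and $\brac{\gamma'}$ are $\F_q$-linearly independent.'' That implication fails when $\brac{\gamma'}=0$: then $\mathrm{span}\paren{\brac{\gamma'}}=\set{0}$, so the hypothesis just says $\brac{\gamma}\neq 0$, and $\set{\brac{\gamma},0}$ is never linearly independent. In that case there is no dual element $\brac{f_2}$ with $\brac{f_2}\paren{\brac{\gamma'}}=1$ (any cocycle gives $f\paren{\gamma'}=0$ by Lemma~\ref{lemma:constclass}), and $\Phi$ is not surjective onto $\F_q^2$ --- its image is $\F_q\times\set{0}$. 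The conclusion still holds, trivially, because $W_{\gamma'}\equiv 1$ is a constant and hence independent of anything, but your dual-basis construction does not cover this case. The paper handles it by an explicit case split (``If $\brac{\gamma'}=0\ldots$ then $W_\omega$ and $W_{\omega'}$ are trivially independent. Now assume $\brac{\gamma'}\neq 0$.''), and you should do the same before invoking linear independence.
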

\begin{proof}
Conditioned on $\brac{\gamma} = \brac{\gamma'},$ we can simply apply Proposition~\ref{prop:wilsondist} to the Wilson loop variable for $\gamma-\gamma'$ to get the desired result.

Now condition on the event $D_{\gamma,\gamma'} \coloneqq \set{\brac{\gamma} \notin \mathrm{span}\paren{\brac{\gamma'}}}.$ First, we show independence of $W_{\gamma}$ and $W_{\gamma'}$ when further conditioning on a fixed configuration $\xi.$ If $\brac{\gamma'} = 0 \in H_{i-1}\paren{P\paren{\xi},\F_q},$ then $W_{\omega}$ and $W_{\omega'}$ are trivially independent. Now assume $\brac{\gamma'} \neq 0.$ Since $D_{\gamma,\gamma'}$ holds, we can extend the set $\set{\brac{\gamma}^*,\brac{\gamma'}^*}$ to a basis $\mathcal{B}$ of $H^{i-1}\paren{P_{\xi},\F_q}$ again using the universal coefficent theorem for cohomology. Then by Proposition~\ref{prop:conditionalmeasures}, we can sample $\nu\paren{f \mid \omega = \xi}$ by taking a random linear combination of elements of $\mathcal{B}$ with independent $\mathrm{Unif}\paren{\F_q}$ coefficients. By construction, we then see that $f\paren{\omega}$ and $f\paren{\omega'}$ are independent, and thus $W_{\gamma}$ and $W_{\gamma'}$ are independent.

Now note that $\paren{W_{\gamma} \mid D_{\gamma,\gamma'}, \omega = \xi} \sim \mathrm{Unif}\paren{\F_q}^{\mathbb{C}}$ by the same argument as in Proposition~\ref{prop:wilsondist}. Then for any $\zeta_1,\zeta_2 \in \paren{\F_q}^{\mathbb{C}},$ we can compute
\begin{align*}
    &\nu\paren{W_{\gamma} = \zeta_1 \mid W_{\gamma'} = \zeta_2, D_{\gamma,\gamma'}} \\
     &\quad = \sum_{\xi} \nu\paren{W_{\gamma} = \zeta_1 \mid  \omega = \xi, W_{\gamma'} = \zeta_2, D_{\gamma,\gamma'}} \nu\paren{\omega = \xi \mid W_{\gamma'} = \zeta_2, D_{\gamma,\gamma'}}\\
    &\quad= \sum_{\xi} \mathbb{P}\paren{\mathrm{Unif}\paren{\F_q}^{\mathbb{C}} = \zeta} \nu\paren{\omega = \xi \mid W_{\gamma'} = \zeta_2, D_{\gamma,\gamma'}}\\
    &\quad= \mathbb{P}\paren{\mathrm{Unif}\paren{\F_q}^{\mathbb{C}} = \zeta_1}\\
    &\quad= \nu\paren{W_{\gamma} = \zeta_1 \mid D_{\gamma,\gamma'}}\,.
\end{align*}
Thus, $W_{\gamma}$ and $W_{\gamma'}$ are independent conditioned on $D_{\gamma,\gamma'}.$

\end{proof}

Finally, we prove a direct generalization of the relationship between the two-point correlation function of the classical Potts model and connection probabilities in the corresponding random-cluster model (Theorem 1.16 in~\cite{grimmett2006random}). For any $\gamma\in Z_{i-1}\paren{X,\F_q},$ let
\[\tau_{\beta,q}\paren{\gamma}=\nu\paren{W_{\gamma}=1}-\frac{1}{q}\,.\]
Note that if $i=1$ and $v_0$ and $v_1$ are sites of $X,$ then  $\tau_{\beta,q}\paren{v_1-v_0}$ is the two-point correlation between the sites.

\begin{Proposition}
\[\tau_{\beta,q}\paren{\gamma}=\paren{1-\frac{1}{q}}\mu_{X}\paren{V_{\gamma}}\,,\]
where $p=1-e^{-\beta}.$
\end{Proposition}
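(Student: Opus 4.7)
The plan is to condition on the event $V_\gamma$ and apply Proposition~\ref{prop:wilsondist} directly. That proposition already tells us the conditional distribution of $W_\gamma$ given $V_\gamma$ and given $V_\gamma^c$, so the remaining work is essentially bookkeeping with the law of total probability.

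First I would write
\[
\nu\paren{W_\gamma = 1} = \nu\paren{W_\gamma = 1 \mid V_\gamma}\,\mu_X\paren{V_\gamma} + \nu\paren{W_\gamma = 1 \mid V_\gamma^c}\,\mu_X\paren{V_\gamma^c},
\]
using the fact that under the coupling $\kappa$ the marginal on configurations is $\mu_X$, so conditioning $W_\gamma$ on $V_\gamma$ (a $\sigma$-algebra measurable with respect to the random complex $P$) is well defined and $\mathbb{P}(V_\gamma) = \mu_X(V_\gamma)$.

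Next, by Proposition~\ref{prop:wilsondist}, the first conditional probability equals $1$ (since $W_\gamma \equiv 1$ on $V_\gamma$), and the second equals $1/q$ (since $W_\gamma$ is uniform on the $q$-th roots of unity, exactly one of which is $1$). Substituting these values gives
\[
\nu\paren{W_\gamma = 1} = \mu_X\paren{V_\gamma} + \frac{1}{q}\paren{1 - \mu_X\paren{V_\gamma}} = \frac{1}{q} + \paren{1 - \frac{1}{q}}\mu_X\paren{V_\gamma}.
\]
Rearranging yields the claim by the definition of $\tau_{\beta,q}\paren{\gamma}$.

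There is no real obstacle here; the only thing to be careful about is that ``$W_\gamma = 1$'' refers to the complex number $1$, corresponding to the group element $0 \in \F_q$, so that the uniform distribution on $\F_q^{\mathbb{C}}$ does indeed assign probability $1/q$ to this value. Everything else is immediate from Proposition~\ref{prop:wilsondist} and the coupling in Proposition~\ref{prop:coupling}.
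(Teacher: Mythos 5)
Your proof is correct and follows essentially the same route as the paper: condition on $V_\gamma$ versus $V_\gamma^c$, read off the conditional probabilities from Proposition~\ref{prop:wilsondist}, and simplify. The only cosmetic difference is that the paper subtracts $1/q$ from each conditional term before combining, while you subtract it at the end — same algebra.
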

\begin{proof}
For simplicity, we write $\mu = \mu_{X}$ in the following calculation.
\begin{eqnarray*}
\tau_{\beta,q}\paren{\gamma}&=&\nu\paren{W_{\gamma}=1}-\frac{1}{q}\\
&=& \paren{\nu\paren{W_{\gamma}=1\mid V_{\gamma}}-\frac{1}{q}}\mu\paren{V_{\gamma}} + \paren{\nu\paren{W_{\gamma}=1\mid V_{\gamma}^c}-\frac{1}{q}}\mu\paren{V_{\gamma}^c}\\
&=& \paren{1-\frac{1}{q}}\mu\paren{V_{\gamma}}\,,
\end{eqnarray*}
where we used Proposition~\ref{prop:wilsondist} in the last step.
\end{proof}

\subsection{Proof of Theorem~\ref{thm:areaperimeter}}
By Theorem~\ref{thm:comparison}, a sharp phase transition for Wilson loop variables is equivalent to one for the topological variables $V_\gamma$ in the corresponding random-cluster model. Such a result is unknown for two-dimensional percolation in four dimension, let alone the random-cluster model, though we prove a ``qualitative'' analogue in Section~\ref{sec:homperc} below. We can, however, show a partial result by comparison to plaquette percolation.  First, we prove  a stochastic domination result for the plaquette random-cluster model which is a direct generalization of the corresponding classical result~\cite{fortuin1972random}.
 
\begin{Lemma}
\label{lemma:stochasticdom}
For any finite cubical complex $X,$ $\mu_{X}$ is stochastically decreasing in $q\geq 1$ for fixed $p.$ On the other hand, if we fix $\hat{p}=\frac{p/q}{1-p+p/q}$ then $\mu_{X,\hat{p},q,i}$ is stochastically increasing in $q\geq 1.$

\end{Lemma}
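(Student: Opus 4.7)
The plan for both statements is to verify Holley's inequality between a pair of plaquette random-cluster measures with different parameters. Holley's lemma states that if two strictly positive probability measures $\mu_1$ and $\mu_2$ on $\set{0,1}^{X^i}$ satisfy
\[\mu_2(\omega \vee \omega') \mu_1(\omega \wedge \omega') \geq \mu_1(\omega) \mu_2(\omega')\]
for every pair of configurations $\omega, \omega'$, then $\mu_1 \leqst \mu_2$. This is the same framework underlying positive association in Theorem~\ref{thm:i-FKG}, and the two key algebraic--topological ingredients --- the Mayer--Vietoris inequality~\eqref{eq:mayervietorisbetti} and the Euler--Poincar\'{e} identity --- are already in hand.

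For the first statement, fix $p$ and $1 \leq q_1 \leq q_2$, and aim to show $\mu_{X,p,q_2,i}\leqst\mu_{X,p,q_1,i}$. Normalizing constants do not enter the Holley condition, and the $p$-dependent factors telescope using $\eta(\omega \vee \omega') + \eta(\omega \wedge \omega') = \eta(\omega) + \eta(\omega')$. After taking logarithms, the condition reduces to
\[\bigl(\mathbf{b}_{i-1}(P_{\omega \wedge \omega'}) - \mathbf{b}_{i-1}(P_{\omega})\bigr)\log q_2 \;\geq\; \bigl(\mathbf{b}_{i-1}(P_{\omega'}) - \mathbf{b}_{i-1}(P_{\omega \vee \omega'})\bigr)\log q_1.\]
Adding an $i$-plaquette can only enlarge the image of the boundary map $\partial_i$, so $\mathbf{b}_{i-1}$ is antimonotone under inclusion, and both parenthesized quantities are non-negative. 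The Mayer--Vietoris inequality~\eqref{eq:mayervietorisbetti} tells me that the coefficient of $\log q_2$ on the left dominates the coefficient of $\log q_1$ on the right; combined with $\log q_2 \geq \log q_1 \geq 0$, this gives the required inequality.

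For the second statement the plan is to first reparameterize. Writing $\hat{p}/(1-\hat{p}) = p/(q(1-p))$ and absorbing the $P$-independent factor $(q(1-p)+p)^{-\abs{X^i}}$ into the partition function rewrites the weight of a configuration as proportional to $(p/(1-p))^{\eta(P)}\, q^{\mathbf{b}_{i-1}(P) - \eta(P)}$. The version of the Euler--Poincar\'{e} identity in Proposition~\ref{prop:c6}, which carries over to a general finite cubical complex with only a change in the additive constant $C=C(X)$, lets me replace $\mathbf{b}_{i-1}(P)-\eta(P)$ by $\mathbf{b}_i(P)-2\eta(P)$ up to a $P$-independent constant. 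Since $P$ contains no $(i+1)$-plaquettes, $\mathbf{b}_i(P) = \dim Z_i(P)$ is monotone \emph{increasing} in $P$, and the Mayer--Vietoris inequality in the proof of Theorem~\ref{thm:i-FKG} applies verbatim with the index $i-1$ replaced by $i$. A second application of Holley's inequality, driven now by the monotonicity of $\mathbf{b}_i$ rather than by the antimonotonicity of $\mathbf{b}_{i-1}$, yields the claim after again using that the exponential-type factors in the Holley ratio telescope.

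The conceptual ingredients are all in place from Section~\ref{sec:hom} and the proof of Theorem~\ref{thm:i-FKG}; the main care needed is bookkeeping. In particular, the second part requires tracking how the reparameterization interacts with the $q$-dependent weight and verifying that, after invoking the Euler--Poincar\'{e} identity, the sign of the monotone Betti number that appears in the weight is consistent with the direction of stochastic domination claimed in the statement.
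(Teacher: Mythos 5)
The first part of your argument is correct and represents a more explicit version of the paper's proof: the authors verify the conditional-probability form of Holley (Theorem~\ref{thm:stochdom}) directly from Equation~\ref{eq:openprob}, whereas you verify the lattice-condition form, using antimonotonicity of $\mathbf{b}_{i-1}$ together with the Mayer--Vietoris inequality to split the coefficient comparison. Both are valid; the paper's route is a bit shorter because monotonicity of the conditional open probabilities in $q$ is almost immediate from Equation~\ref{eq:openprob}.

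For the second part, however, there is a sign error that breaks the argument. The correct reparameterization gives
\[
p^{\eta(P)}(1-p)^{\abs{X^i}-\eta(P)}q^{\mathbf{b}_{i-1}(P)}
\;\propto\;
\left(\frac{\hat{p}}{1-\hat{p}}\right)^{\eta(P)} q^{\,\mathbf{b}_{i-1}(P)\,+\,\eta(P)}\,,
\]
with $+\eta(P)$, not $-\eta(P)$. (Indeed, $\tfrac{p}{1-p}=q\,\tfrac{\hat{p}}{1-\hat{p}}$, so the factor $q^{\eta(P)}$ gets \emph{added} to the exponent, not subtracted.) With the correct sign, Proposition~\ref{prop:c6} gives $\mathbf{b}_{i-1}(P)+\eta(P)=\mathbf{b}_i(P)+\text{const}$, so the weight becomes $\propto(\hat{p}/(1-\hat{p}))^{\eta}q^{\mathbf{b}_i}$, the $\hat{p}$-dependent term telescopes (it is the same for both measures being compared), and your Holley argument with the Mayer--Vietoris inequality for $\mathbf{b}_i$ closes cleanly. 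With your $-\eta(P)$, the exponent becomes $\mathbf{b}_i(P)-2\eta(P)$, and the $q^{-2\eta}$ factor does \emph{not} telescope because $q_1\neq q_2$; in fact, since $0\leq \mathbf{b}_i(P_{\omega\vee\omega'})-\mathbf{b}_i(P_{\omega'})\leq \Delta\coloneqq\eta(\omega\vee\omega')-\eta(\omega')$, the quantity $\mathbf{b}_i(P_{\omega\vee\omega'})-\mathbf{b}_i(P_{\omega'})-2\Delta$ is $\leq -\Delta\leq 0$, and the Holley inequality fails outright (take $\omega,\omega'$ each with a single open plaquette that kills a cycle, $\omega\wedge\omega'=\emptyset$, and any $q_2>q_1$). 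So the plan is sound, but the sign must be fixed before the Mayer--Vietoris and telescoping steps can be invoked.

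As a comparison: the paper handles the second half by observing that, with $\hat{p}$ fixed, $p$ is an increasing function of $q$, so by Equation~\ref{eq:openprob} both conditional open probabilities are non-decreasing in $q$ and the Holley (conditional-probability) condition follows immediately; no reparameterization of the weight is needed. Your route is correct in spirit and gives a clean picture of the self-dual structure (the reparameterized weight $\propto(\hat{p}/(1-\hat{p}))^{\eta}q^{\mathbf{b}_i}$ is precisely the form in which $\mathbf{b}_i$ plays the monotone role), but it is slightly longer and requires the sign correction noted above.
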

\begin{proof}
This is a consequence of the fact that adding an $i$-plaquette can only reduce $\mathbf{b}_{i-1}$ by one or leave $\mathbf{b}_{i-1}$ unchanged.

Recall that by Equation~\ref{eq:openprob}, $\hat{p}$ is the conditional probability that a plaquette is open given that it reduces $\mathbf{b}_{i-1}$ by one when leaving the states of the other plaquettes unchanged. First fix $p.$ Since $\hat{p}$ is decreasing in $q$ and the conditional probability that a plaquette is open given that it does not kill an $(i-1)$-cycle is constant in $q$ (equaling $p$), an application of Theorem~\ref{thm:stochdom} shows that $\mu_{X}$ is stochastically decreasing in $q.$ Now fix $\hat{p}.$ Then $p$ is decreasing as a function of $q,$ so again applying Theorem~\ref{thm:stochdom} to the conditional probabilities given by Equation~\ref{eq:openprob} yields that $\mu_{X}$ is stochastically increasing in $q.$
\end{proof}

A similar statement for $\mu_{\Z^d}$ follows immediately since it is a local limit of random-cluster measures on finite cubical complexes.

\begin{Corollary}\label{cor:zddom}
$\mu_{\Z^d}$ is stochastically decreasing in $q\geq 1$ for fixed $p.$ If we fix $\hat{p}=\frac{p/q}{1-p+p/q},$ $\mu_{\Z^d,\hat{p},q,i}$ is stochastically increasing in $q\geq 1.$
\end{Corollary}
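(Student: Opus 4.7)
The plan is to lift the finite-volume stochastic monotonicity established in Lemma~\ref{lemma:stochasticdom} to the infinite-volume measure by passing to the weak limit. Recall that $\mu_{\Z^d}$ (with either free or wired boundary conditions) is defined as the weak limit of the finite-volume plaquette random-cluster measures $\mu_{\Lambda_n}$ on boxes $\Lambda_n = [-n,n]^d \cap \Z^d$. Since each such measure already satisfies the stated monotonicity in $q$ by Lemma~\ref{lemma:stochasticdom}, it suffices to show that stochastic domination is preserved under this weak limit when restricted to events depending on finitely many $i$-plaquettes.

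More concretely, for the first statement, fix $1 \leq q_1 \leq q_2$ and $p \in (0,1)$. Lemma~\ref{lemma:stochasticdom} gives $\mu_{\Lambda_n,p,q_1,i} \geq_{\mathrm{st}} \mu_{\Lambda_n,p,q_2,i}$ for every $n$. By Strassen's theorem, this means for each cylinder increasing event $A$ (depending on the states of finitely many plaquettes) we have $\mu_{\Lambda_n,p,q_1,i}(A) \geq \mu_{\Lambda_n,p,q_2,i}(A)$. Taking $n \to \infty$ and using weak convergence on the (clopen) cylinder event $A$ preserves the inequality, yielding $\mu_{\Z^d,p,q_1,i}(A) \geq \mu_{\Z^d,p,q_2,i}(A)$. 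A standard Strassen-type argument then upgrades this to $\mu_{\Z^d,p,q_1,i} \geq_{\mathrm{st}} \mu_{\Z^d,p,q_2,i}$.

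The second statement follows by the same argument: with $\hat{p} = \frac{p/q}{1-p+p/q}$ held fixed, Lemma~\ref{lemma:stochasticdom} provides finite-volume monotonicity in the reverse direction, and the limit again preserves it. The one small wrinkle is that the domination Lemma~\ref{lemma:stochasticdom} is stated for finite cubical complexes; however, each $\Lambda_n$ (with either boundary condition) qualifies, so this causes no difficulty.

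I do not anticipate a genuine obstacle here: the entire content of the corollary is the routine observation that stochastic ordering passes through weak limits on cylinder events, combined with the finite-volume result already in hand. The only point requiring any care is to note that one must fix the same boundary conditions (free or wired) throughout the comparison so that the corresponding sequences of finite-volume measures converge to the same infinite-volume object on each side of the inequality.
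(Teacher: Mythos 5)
Your argument is correct and is essentially the paper's proof, which simply states that the result "follows immediately since it is a local limit of random-cluster measures on finite cubical complexes." You have spelled out the standard details (finite-volume monotonicity from Lemma~\ref{lemma:stochasticdom}, preservation of stochastic ordering under weak limits via cylinder events, and the need to fix boundary conditions consistently), all of which are correct.
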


Next, we require two more results of~\cite{ACCFR83} for plaquette percolation.

  \begin{Theorem}[Aizenman, Chayes, Chayes, Fr\"olich, Russo~\cite{ACCFR83}]
  \label{thm:accfrhigher}
  Consider $i$-dimensional Bernoulli plaquette percolation on $\Z^d.$  There exist finite, positive constants $\hat{c}_1=\hat{c}_1(p,d,i),\hat{c}_2=\hat{c}_2(p,d,i)>0$ so that, for hyperrectangular $(i-1)$-boundaries in $\Z^d,$
  \begin{equation}
 \label{eq:wilsoninequalities1_percolation}
 \exp(-\hat{c}_{1}\mathrm{Area}(\gamma) \leq \mathbb{P}_p(V_\gamma) \leq  \exp(-\hat{c}_2 \mathrm{Per}(\gamma))\,.
\end{equation}
  
Furthermore, there are constants $0<\tilde{p}_1\paren{d,i}\leq \tilde{p}_2\paren{d,i}<1$   so that  
  \begin{align*}
&-\frac{\log\paren{\mathbb{P}_p(V_\gamma)}}{\mathrm{Area}(\gamma)}\;\; \rightarrow \;\; \hat{c}_1 &&& \quad \text{if } p < \tilde{p}_1\paren{d,i} \\
&-\frac{\log\paren{\mathbb{P}_p(V_\gamma)}}{\mathrm{Per}(\gamma))}\;\;=\;\;\Theta\paren{1} &&&\quad \text{if }  p > \tilde{p}_2\paren{d,i}\,.
\end{align*}   
  \end{Theorem}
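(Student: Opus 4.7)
The plan is to split into three subclaims: the universal two-sided bounds~(\ref{eq:wilsoninequalities1_percolation}), the matching area-law asymptotic at small $p$, and the matching perimeter-law asymptotic at large $p$. For the area-law lower bound $\mathbb{P}_p(V_\gamma)\ge e^{-\hat{c}_1\mathrm{Area}(\gamma)}$, I take the minimal hyperrectangular bounding chain $\rho_0$ with $\partial\rho_0=\gamma$ and $\abs{\rho_0}=\mathrm{Area}(\gamma)$; since plaquettes are independent, $\mathbb{P}_p(\rho_0\text{ entirely open})=p^{\mathrm{Area}(\gamma)}$, and this event forces $V_\gamma$. For the perimeter-law upper bound, the key observation is that every $(i-1)$-plaquette $\tau$ in the support of $\gamma$ must be a face of at least one open $i$-plaquette --- otherwise $\tau$ cannot appear with nonzero coefficient in $\partial\rho$ for any $i$-chain $\rho$. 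Each $\tau$ is a face of exactly $2(d-i+1)$ many $i$-plaquettes, so the necessary event $A_\tau$ has probability $1-(1-p)^{2(d-i+1)}<1$. A greedy selection yields $\Theta(\mathrm{Per}(\gamma))$ many such $\tau$ with pairwise disjoint $i$-neighborhoods, making the corresponding $A_\tau$ independent and producing the exponential-in-perimeter upper bound.

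For the sharp area-law regime $p<\tilde{p}_1$, the matching upper bound follows from a Peierls union bound,
\[
\mathbb{P}_p(V_\gamma)\le\sum_{\rho:\;\partial\rho=\gamma}p^{\abs{\rho}}.
\]
A standard surface-counting bound shows that the number of $i$-chains of size $k$ in $\Z^d$ bounding a given $(i-1)$-cycle grows at most like $C_d^k$ for a dimensional constant $C_d$, and every such $\rho$ has $\abs{\rho}\ge\mathrm{Area}(\gamma)$. Once $C_d p<1$ --- which I take to define $\tilde{p}_1$ --- the geometric tail is bounded by $2(C_d p)^{\mathrm{Area}(\gamma)}$, matching the exponential order of the lower bound. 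To pin down the exact limit $\hat{c}_1$, I would invoke subadditivity: concatenating two hyperrectangles sharing a face gives $V_{\gamma_1}\cap V_{\gamma_2}\subseteq V_{\gamma_1+\gamma_2}$ and $\mathrm{Area}(\gamma_1+\gamma_2)=\mathrm{Area}(\gamma_1)+\mathrm{Area}(\gamma_2)$, so FKG gives $-\log\mathbb{P}_p(V_{\gamma_1+\gamma_2})\le -\log\mathbb{P}_p(V_{\gamma_1})-\log\mathbb{P}_p(V_{\gamma_2})$, and a multi-dimensional Fekete argument in the side lengths produces the desired limit $\hat{c}_1$.

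For the perimeter-law regime $p>\tilde{p}_2$, the remaining ingredient is the matching lower bound $\mathbb{P}_p(V_\gamma)\ge e^{-C(p)\mathrm{Per}(\gamma)}$. At high $p$ the closed $i$-plaquettes form a sparse, subcritical system, so $\rho_0$ is almost entirely open with only small local defects, and the strategy is to repair each defect using neighboring open plaquettes. Concretely, I would partition a thin slab around $\rho_0$ into mesoscopic blocks of side $L$ and declare a block \emph{good} if the open plaquettes within it contain a local bounding patch of the portion of $\rho_0$ crossing it; for $p$ close to $1$ each block is good with probability tending to $1$, and a Pisztora-style coarse-graining then produces a global bounding chain for $\gamma$ with probability at least $1-e^{-c(p)\mathrm{Per}(\gamma)}$, since failures must propagate along a surface pinned to $\gamma$.

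The main obstacle is this last step. Unlike classical Bernoulli bond percolation, there is no off-the-shelf renormalization theory for supercritical plaquette systems in arbitrary codimension, so the Pisztora-style block argument must be adapted to the topological setting. The cleanest route is to dualize the event \emph{block is good} into a statement about the dual $(d-i)$-dimensional plaquette system --- which is subcritical for $p$ close to $1$ --- using the Alexander-type duality of Section~\ref{sec:topdual}; one then inherits the necessary exponential decay estimates from subcritical plaquette percolation, where standard Peierls-type counts on closed dual surfaces suffice.
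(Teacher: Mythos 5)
The paper does not prove Theorem~\ref{thm:accfrhigher}; it is imported wholesale from~\cite{ACCFR83} with no proof reproduced, so there is no internal argument to compare against. What I can do is check your outline against what the stated theorem actually requires.

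Your first three steps are sound and match the ACCFR blueprint: the single minimal bounding chain gives $\mathbb{P}_p(V_\gamma)\ge p^{\mathrm{Area}(\gamma)}$; the observation that every $(i-1)$-face in $\mathrm{supp}(\gamma)$ must meet an open $i$-plaquette, followed by a greedy selection of $\Theta(\mathrm{Per}(\gamma))$ faces with disjoint plaquette neighborhoods, gives $\mathbb{P}_p(V_\gamma)\le \paren{1-(1-p)^{2(d-i+1)}}^{c\,\mathrm{Per}(\gamma)}$; and the Peierls union bound plus a lattice-animal count (for connected bounding chains pinned to $\gamma$) gives the small-$p$ area law, with the FKG subadditivity and a Fekete argument supplying the limit. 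Two minor cautions: the $\hat c_1$ in the inequality and the $\hat c_1$ in the small-$p$ limit are logically different constants and should not be conflated, and the animal-counting step needs a reduction to bounding chains whose every connected component touches $\gamma$ (harmless, since removing a cycle component preserves the boundary).

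The last step, however, contains a genuine error, and it is the step that actually needs an idea. You claim the coarse-graining produces a bounding chain with probability at least $1-e^{-c(p)\,\mathrm{Per}(\gamma)}$. This cannot be right: it directly contradicts the perimeter-law \emph{upper} bound $\mathbb{P}_p(V_\gamma)\le e^{-\hat c_2\,\mathrm{Per}(\gamma)}$ that you correctly established two paragraphs earlier, since for large $\mathrm{Per}(\gamma)$ one quantity tends to $1$ and the other to $0$. The target for the supercritical regime is the much weaker $\mathbb{P}_p(V_\gamma)\ge e^{-C(p)\,\mathrm{Per}(\gamma)}$, and any correct argument must pay the perimeter cost: the plaquettes incident to $\mathrm{supp}(\gamma)$ are forced, and those alone contribute a factor $p^{\Theta(\mathrm{Per}(\gamma))}$ that cannot be avoided. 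The natural repair of your plan is to first open an $O(1)$-thick collar around $\gamma$ at cost $e^{-c\,\mathrm{Per}(\gamma)}$, and then show that \emph{conditionally on that collar}, a bounding chain extending it into the bulk exists with probability bounded away from $0$ (or tending to $1$); only at that second, bulk stage does a coarse-graining or a dual-Peierls argument (using that the dual $(d-i)$-system is subcritical for $p$ large) belong. As written, the proposal omits the collar cost entirely and states a bound of the wrong order.
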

  
  The next lemma for the plaquette random-cluster model follows from the same tiling argument used to show Proposition 2.4 of~\cite{ACCFR83}, which only requires positive association.

\begin{Lemma}
\label{lemma:areaconvergence}
Let $\set{\gamma_n}$ be a sequence of hyperrectangular $(i-1)$-boundaries whose dimensions diverge with $n.$ Then
\[\lim_{n\rightarrow\infty} \frac{\mu_{\Z^d}\paren{V_{\gamma_n}}}{\mathrm{Area}(\gamma_n)}\] 
converges, where $\mu_{\Z^d}$ denotes an infinite volume random-cluster measure, constructed with either free or wired boundary conditions.
\end{Lemma}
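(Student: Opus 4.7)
The plan is to prove a Fekete-type subadditivity statement for $f(n_1,\dots,n_i) \coloneqq -\log \mu_{\Z^d}(V_\gamma)$, where $\gamma$ is the boundary of a hyperrectangle of shape $n_1 \times \cdots \times n_i$ (which by translation invariance of $\mu_{\Z^d}$ depends only on the shape, not the position). The key geometric observation is that if one slices such a hyperrectangle $R$ with a hyperplane in the first coordinate direction into two smaller hyperrectangles $R'$ and $R''$ of shapes $(m,n_2,\dots,n_i)$ and $(n_1-m,n_2,\dots,n_i)$, then $\partial R = \partial R' + \partial R''$ as $(i-1)$-chains, because the common interior face of $R'$ and $R''$ appears in $\partial R'$ and $\partial R''$ with opposite orientations and cancels. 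Consequently, if both $\partial R'$ and $\partial R''$ bound $i$-chains $S',S'' \subseteq P$, then $\partial R = \partial(S'+S'')$, so $V_{\partial R'} \cap V_{\partial R''} \subseteq V_{\partial R}$.

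Next I would verify that $V_\gamma$ is an increasing event in the configuration: if $P \subseteq P'$ and $\gamma = \partial \rho$ with $\rho \in C_{i+1}(P;\F)$, then $\rho \in C_{i+1}(P';\F)$, so $V_\gamma$ persists under the addition of plaquettes. Combining with the previous step and positive association for $\mu_{\Z^d}$ (Theorem~\ref{thm:i-FKG} plus its infinite volume version), I obtain
\[
\mu_{\Z^d}(V_{\partial R}) \geq \mu_{\Z^d}(V_{\partial R'} \cap V_{\partial R''}) \geq \mu_{\Z^d}(V_{\partial R'})\,\mu_{\Z^d}(V_{\partial R''}),
\]
which rewrites as separate subadditivity of $f$ in each coordinate,
\[
f(n_1,n_2,\dots,n_i) \leq f(m,n_2,\dots,n_i) + f(n_1-m,n_2,\dots,n_i),
\]
and analogously in the other $i-1$ coordinates by symmetry.

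I would then apply Fekete's subadditive lemma iteratively in each of the $i$ coordinates. Starting from subadditivity in $n_1$, Fekete's lemma gives convergence of $f(n_1,n_2,\dots,n_i)/n_1$ as $n_1 \to \infty$, with limit
\[
g(n_2,\dots,n_i) \coloneqq \inf_{n_1 \geq 1} \frac{f(n_1,n_2,\dots,n_i)}{n_1}.
\]
Subadditivity of $f$ in $n_2$ passes to subadditivity of $g$ in $n_2$ (the infimum of subadditive functions is subadditive), and iterating $i-1$ more times produces the desired convergence of $f(n_1,\dots,n_i)/(n_1 n_2 \cdots n_i) = -\log \mu_{\Z^d}(V_\gamma)/\mathrm{Area}(\gamma)$ as all dimensions tend to infinity. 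Since translates of hyperrectangular boundaries of the same shape have equal probability under $\mu_{\Z^d}$, the limit is the same along any sequence $\gamma_n$ whose dimensions diverge.

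The main obstacle is technical rather than conceptual: verifying the matchup of boundary orientations so that $\partial R' + \partial R''$ really equals $\partial R$ (the sign convention in the boundary operator has to be tracked through the Gauss-elimination-style cancellation on the internal slicing face), and justifying the interchange of limits when one argues convergence in all $i$ coordinates simultaneously rather than iteratively. Both are bookkeeping issues; the genuine mathematical content is the combination of the FKG inequality with the cycle-splitting observation, exactly as in the tiling argument of Proposition 2.4 of~\cite{ACCFR83}.
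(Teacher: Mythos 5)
Your proof is correct and is exactly the tiling argument that the paper invokes by citing Proposition 2.4 of~\cite{ACCFR83}: split the hyperrectangle along an interior hyperplane so the internal $(i-1)$-face cancels, observe $V_\gamma$ is increasing, apply FKG to get coordinatewise subadditivity of $-\log\mu_{\Z^d}(V_\gamma)$, and then run Fekete's lemma. Two cosmetic notes: the lemma as printed has an evident typo (it should read $-\log\mu_{\Z^d}(V_{\gamma_n})/\mathrm{Area}(\gamma_n)$, as you silently corrected), and the bounding chain $\rho$ for an $(i-1)$-cycle $\gamma$ lives in $C_i(P;\F)$ rather than $C_{i+1}$, which does not affect your argument.
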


\begin{proof}[Proof of Theorem~\ref{thm:areaperimeter}]
Let $p_1\paren{\beta}=1-e^{-\beta}$ and $p_2\paren{\beta}=\frac{p_1\paren{\beta}/q}{1-p_1\paren{\beta}+p_1\paren{\beta}/q}$. It follows from Corollaries~\ref{cor:VW} and~\ref{cor:zddom}  that 
\[\mu^{\mathbf{f}}_{\Z^d,p_1\paren{\beta},q,i}\paren{V_\gamma}\leq \mathbb{E}_{\nu^{\mathbf{f}}}\paren{W_{\gamma}}\leq \mu^{\mathbf{f}}_{\Z^d,p_2\paren{\beta},q,i}\paren{V_{\gamma}}\,.\]

Then the inequalities 
\begin{equation*}
 \exp(-\oldconstant{const:3} \mathrm{Area}(\gamma)) \leq \mathbb{E}_{\nu^{\mathbf{f}}}(W_\gamma) \leq  \exp(-\oldconstant{const:4} \mathrm{Per}(\gamma))\,.
\end{equation*}
follow from the corresponding statements in Equation~\ref{eq:wilsoninequalities1_percolation}. In addition, we may set $\beta_1=-\log\paren{1-\hat{p}}$ where
\[\hat{p}=\frac{\tilde{p}_1\paren{i,d} q}{\tilde{p}_1\paren{i,d} (q-1)+1}\]
and $\tilde{p}_1\paren{i,d}$ is given by Lemma~\ref{thm:accfrhigher}. Finally, fix $\beta_2=-\log\paren{1-\tilde{p}_2\paren{i,d}}$ and note that the existence of the constant  $-\oldconstant{const:3}$ follows from Lemma~\ref{lemma:areaconvergence}. Thus, we have the desired statement:
\begin{align*}
&-\frac{\log\paren{\mathbb{E}_{\nu^{\mathbf{f}}}(W_\gamma)}}{\mathrm{Area}(\gamma)}\;\; \rightarrow \;\; \oldconstant{const:3}  && \quad \text{if } \beta<\beta_1 \\
&-\frac{\log\paren{\mathbb{E}_{\nu^{\mathbf{f}}}(W_\gamma)}}{\mathrm{Per}(\gamma))}\;\;=\;\;\Theta\paren{1} &&\quad \text{if } \beta >\beta_2\,.
\end{align*}   
\end{proof}

\section{Phase Transitions for Homological Percolation}
\label{sec:homperc}

In this section, we prove the existence of sharp phase transitions for the $q$-state $i$-dimensional plaquette random-cluster model on $\T^d$ in the sense of homological percolation. First, we recall the definition of homological percolation. Let $P$ be an $i$-dimensional subcomplex of $\T^d_N$ and let $\phi_*:H_i\paren{P;\,\F}\rightarrow H_i\paren{\T^d;\,\F}$ be the map induced by inclusion. Then $b_i=\rank\phi_*$ counts the number of giant cycles of $P.$ A homological percolation phase transition occurs at $p_c$ if $b_i=0$ with high probability if $p<p_c,$ and $b_i$ attains its maximum possible value, $\binom{d}{i},$ with high probability if $p>p_c.$

\subsection{Interpretation for Potts Lattice Gauge Theory}
The homological percolation transition has two interpretations for Potts lattice gauge theory. The first is in terms of the behavior of the plaquette Swendsen--Wang algorithm, and was discussed in Section~\ref{sec:sw}. The other is related to the behavior of generalized  Polyakov loop variables, random variables of the form $W_\gamma\paren{f}$ where $\gamma$ is a giant $(i-1)$-cycle. 

\begin{Proposition}
Let $i \geq 1$ and let $X \subset \T^d_N$ be a subcomplex containing a giant $i$-cycle. Then there exist Polyakov loops $\gamma_1,\gamma_2 \in Z_{i-1}\paren{X;\,\F_q}$ so that $d_{\T^d_n}\paren{\mathrm{supp}\paren{\gamma_1},\mathrm{supp}\paren{\gamma_2}} \geq N/2-1$ and $\brac{\gamma_1} = \brac{\gamma_2} \in H_{i-1}\paren{X;\,\F_q}.$
\end{Proposition}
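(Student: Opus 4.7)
The plan is to produce $\gamma_1,\gamma_2$ by slicing the given giant $i$-cycle $\sigma\in Z_i(X;\F_q)$ along two parallel hyperplanes at torus distance $\lfloor N/2\rfloor$ apart. By the K\"unneth formula, $H_i(\T^d_N;\F_q)\cong\bigoplus_{|S|=i}\F_q\cdot[T_S]$ where $T_S=\prod_{j\in S}S^1_j$, and the hypothesis that $\sigma$ is giant means that some K\"unneth coefficient of $\phi_*[\sigma]$ is nonzero; after relabeling coordinates I assume $1\in S$ for some such $S$. Fix $b=\lfloor N/2\rfloor$ and let $H_0=\{x_1=0\}$, $H_b=\{x_1=b\}$. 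Partition the $i$-cells of $\sigma$ by writing $\sigma=\sigma_L+\sigma_R$, where $\sigma_L$ contains exactly those cells whose first-coordinate factor $I_1$ equals $\{a\}$ or $[a,a+1]$ with $a\in\{0,\ldots,b-1\}$, and $\sigma_R$ contains the rest.

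A cell-level bookkeeping using $\partial\sigma=0$ shows that $\partial\sigma_L$ vanishes on every $(i-1)$-face lying strictly inside one of the two slabs, since all cofacial $i$-cells of $\sigma$ around such a face lie on the same side of the partition and their contributions cancel exactly as in $\partial\sigma$. Hence $\partial\sigma_L$ is supported on $H_0\cup H_b$. Let $\gamma_1$ be the part of $\partial\sigma_L$ supported on $H_0$ and let $\gamma_2$ be the negative of the part supported on $H_b$, so that $\gamma_1-\gamma_2=\partial\sigma_L$. Because $\sigma_L$ is a chain in the subcomplex $X$, so are $\gamma_1$ and $\gamma_2$. The identity $\partial\gamma_1-\partial\gamma_2=\partial^2\sigma_L=0$ combined with the disjointness of $H_0$ and $H_b$ forces $\partial\gamma_1=\partial\gamma_2=0$, so $\gamma_1,\gamma_2\in Z_{i-1}(X;\F_q)$, and $[\gamma_1]=[\gamma_2]$ in $H_{i-1}(X;\F_q)$ because $\gamma_1-\gamma_2=\partial\sigma_L\in B_{i-1}(X;\F_q)$. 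Their supports lie in the hyperplanes $H_0$ and $H_b$, which are at torus distance $\min(b,N-b)=\lfloor N/2\rfloor\geq N/2-1$.

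The remaining---and main---obstacle is to verify that $\gamma_1$ (equivalently $\gamma_2$) represents a nonzero class in $H_{i-1}(\T^d_N;\F_q)$, i.e.\ is actually a Polyakov loop. For this I would use the Mayer--Vietoris sequence for the closed cover $\T^d_N=\overline{V_1}\cup\overline{V_2}$, where $V_1,V_2$ are the two open slabs bounded by $H_0$ and $H_b$. Each $\overline{V_j}$ deformation-retracts onto $\T^{d-1}_N$ and $\overline{V_1}\cap\overline{V_2}=H_0\sqcup H_b$, so the connecting map $\partial_{\mathrm{MV}}:H_i(\T^d_N;\F_q)\to H_{i-1}(\T^{d-1}_N;\F_q)^{\oplus 2}$ sends $[\sigma]$ to the class of the pair $(\gamma_1,-\gamma_2)$ by construction of $\sigma_L$. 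The image of $H_i(\overline{V_1};\F_q)\oplus H_i(\overline{V_2};\F_q)\to H_i(\T^d_N;\F_q)$ is precisely the span of $\{[T_S]:|S|=i,\,1\notin S\}$, so exactness together with our choice of $S\ni 1$ yields $\partial_{\mathrm{MV}}[\sigma]\neq 0$, hence $[\gamma_1]\neq 0$ in $H_{i-1}(\T^{d-1}_N;\F_q)$. The inclusion $H_0\hookrightarrow\T^d_N$ is K\"unneth-injective on homology, so $\phi_*[\gamma_1]\neq 0$ in $H_{i-1}(\T^d_N;\F_q)$ as well, confirming that both $\gamma_1$ and $\gamma_2$ are Polyakov loops.
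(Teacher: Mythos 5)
Your proposal is correct, and it is genuinely different from the paper's argument. The paper slices the cycle along two half-integer hyperplanes $T_1=\{x_1=1/2\}$ and $T_2=\{x_1=\lfloor N/2\rfloor+1/2\}$, which requires subdividing the cubical complex, defines $\gamma_1,\gamma_2$ as the cellwise intersections of $\rho$ with $T_1,T_2$, constructs an explicit bounding chain $\rho_1$ to establish homology, and then shifts back by $(-1/2,0,\ldots,0)$; nontriviality is then argued by contradiction, using that a cycle supported in $T_1\cup D_1\cup T_2$ deformation-retracts into a $(d-1)$-torus and cannot represent a class ``orthogonal'' to that slice. Your argument instead slices at integer levels (no subdivision), obtains $\gamma_1,\gamma_2$ directly as the two halves of $\partial\sigma_L$, gets the homology equivalence for free from $\gamma_1-\gamma_2=\partial\sigma_L$, and replaces the contradiction argument with an exactness computation in the Mayer--Vietoris sequence for the closed cover by the two slabs. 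The Mayer--Vietoris route is more systematic and avoids some of the delicate ``contained in a deformation retract'' bookkeeping the paper has to do. One small step you compress: from $\partial_{\mathrm{MV}}[\sigma]\neq 0$ you conclude $[\gamma_1]\neq 0$ in $H_{i-1}(H_0)$, but exactness only gives that the pair $([\gamma_1],-[\gamma_2])$ is nonzero, hence at least one component is nonzero. You need to add that the two inclusions $H_0,H_b\hookrightarrow\overline{V_1}$ both induce isomorphisms on $H_{i-1}$ and carry $[\gamma_1],[\gamma_2]$ to the same class (since $\gamma_1-\gamma_2=\partial\sigma_L$ bounds inside $\overline{V_1}$), so that the two components are nonzero together or zero together. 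With that line inserted, the proof is complete.
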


\begin{proof}
Let $\rho \in H_i\paren{X;\,\F_q}$ be a giant $i$-cycle. First, we show that a transverse $(d-1)$-dimensional slice of $\rho$ contains a giant $(i-1)$-cycle. Let $T_1 = \T^d_N \cap \set{x_1=1/2}$ and $T_2 = \T^d_N \cap \set{x_1=\floor{N/2} + 1/2}.$ Subdivide the cubical complex $\T_N^d$ to add an $(k-1)$-plaquette at each nonempty intersection of a $k$-plaquette of $\T^d_N$ with $T_1$ or $T_2$ for each $k \leq d.$ Call this subdivision $T,$ and give $T_1,$ $T_2,$ and $X$ the resulting cubical complex structures.

Without loss of generality assume that $\rho$ is homologous to a sum of the standard generators of the torus, including at least one that is orthogonal to $T,$ i.e. a standard generator that is a product of $i$ many $S^1$ factors, one of which is in the first coordinate direction (see the discussion of the homology of the torus at the end of Section~\ref{sec:hom}). If $\rho = \sum_{\sigma \in X^i} c_{\sigma} \sigma$ (remember that $X^i$ now has subdivided cells that were not originally in $\T^d_N)$), let 
\[\gamma_1= \sum_{\sigma \in X^i} c_{\sigma} \paren{\sigma \cap T_1}\,.\]
Note that each $i$-plaquette of $X^i$ intersects $T_1$ in either an $(i-1)$-plaquette or not at all. Also, let $\gamma_2$ be constructed as $\gamma_1$ with $T_1$ replaced by $T_2.$ Observe that the supports of $\gamma_1$ and $\gamma_2$ are at distance at least $N/2-1$ apart.

Consider $D \coloneqq \T^d_N \setminus \paren{T\cup T'}.$ Since $D$ is disconnected, we can write it as a union of its connected components $D = D_1 \cup D_2.$ Let

Let \[\rho_1 \coloneqq \sum_{\sigma \cap D_1 \neq \emptyset} c_\sigma \sigma\] and \[\rho_2 \coloneqq \sum_{\sigma \cap D_2 \neq \emptyset} c_\sigma \sigma\,.\] Then $\rho_1$ is an $i$-chain with boundary $\gamma_1 + \gamma_2,$ so $\gamma_1$ and $\gamma_2$ are homologous. Then it only remains to show that they are nontrivial giant cycles, as we can then shift them by $\paren{-1/2,0,\ldots,0}$ to obtain the desired Polyakov loops.

We claim that $\gamma_1$ and $\gamma_2$ are giant $(i-1)$-cycles within $T_1$ and $T_2$ respectively. Since $\gamma_1$ and $\gamma_2$ are homologous, it is enough to show that at least one of them is a giant cycle. Suppose that neither are, so there are $i$-chains $\alpha_1,\alpha_2 \in H_i\paren{\T^d_N;\,\F_q}$ so that $\gamma_1 = \partial \alpha_1$ and $\gamma_2 = \partial \alpha_2.$ so $\beta_1 \coloneqq \paren{\rho_1} - \alpha_1 - \alpha_2$ is an $i$-cycle. Moreover, since $\beta_1$ is contained in a subset of $\T^d_N$ that deformations retracts to $T_1,$ namely $T_1 \cup D_1 \cup T_2,$ it is homologous within $\T^d_N$ to an $i$-cycle $\hat{\beta}_1$ contained in $T_1.$ Similarly, we set $\beta_2 \coloneqq \paren{\rho \cap E_2} - \alpha_1 - \alpha_2,$ and can find $\hat{\beta}_2$ contained in $T$ that is homologous to $\beta_2$ within $\T^d_N.$ Then we have 
\[\brac{\rho} = \brac{\hat{\beta}_1 + \hat{\beta}_2} \in H_{i}\paren{\T^d_N;\,\F_q}\,.\]
But the $(d-1)$-tori $T_1$ and $T_2$ can only contain certain giant cycles described by the K\"{u}nneth formula. In particular, they cannot contain orthogonal homology classes, contradicting the assumption about $\rho$ made earlier.
\end{proof}

The following is now a corollary of Theorems~\ref{thm:half} and ~\ref{thm:weak}.
\begin{Corollary}
Let $1\leq i<d,$ let $q$ be an odd prime, and set
\[\beta_{\mathrm{surf}} = \beta_{\mathrm{surf}}\paren{q,d,N} \coloneqq -\log\paren{1-\lambda\paren{q,d,N}}\,.\]
If $\beta<\beta_{\mathrm{surf}},$ then $(i-1)$-dimensional then, with high probability, $q$-state Potts lattice gauge theory has Polyakov loops which all take the same value and rule a giant cycle of the corresponding plaquette random cluster model. When $d=2i,$ we can take $\beta_{\mathrm{surf}}=\beta_{\mathrm{sd}}=\log\paren{1+\sqrt{q}}.$
\end{Corollary}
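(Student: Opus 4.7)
The plan is to transfer the homological percolation statements already established for the plaquette random-cluster model to statements about the Wilson/Polyakov loop observables of Potts lattice gauge theory, using the Edwards--Sokal-type coupling of Proposition~\ref{prop:coupling} together with Proposition~\ref{prop:wilsondist} and the preceding Proposition on the existence of homologous pairs of Polyakov loops.

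First I would sample $(f,P)$ jointly from the coupling $\kappa$ with $p=1-e^{-\beta}$, so that the marginal of $f$ is $\nu$ and the marginal of $P$ is $\mu_{\T^d_N,p,q,i}$. By definition of $\beta_{\mathrm{surf}}$, the hypothesis on $\beta$ places $p$ on the relevant side of $\lambda(q,d,N)$, so Theorem~\ref{thm:weak} (respectively Theorem~\ref{thm:half} in the self-dual case $d=2i$, where $\beta_{\mathrm{sd}}=-\log(1-p_{\mathrm{sd}})=\log(1+\sqrt{q})$) gives that the event $S$ --- that $\phi_*\colon H_i(P;\F_q)\to H_i(\T^d_N;\F_q)$ is surjective --- holds with probability tending to $1$ as $N\to\infty$. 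This uses the standing assumption that $q$ is an odd prime, which ensures $\mathrm{char}(\F_q)\neq 2$ as required by those theorems.

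Next, on the event $S$, I would apply the immediately preceding Proposition to each of the $\binom{d}{i}$ independent standard homology directions of the torus. This produces, for each such direction, a pair of $(i-1)$-cycles $\gamma_1,\gamma_2\in Z_{i-1}(P;\F_q)$ supported on disjoint transverse slices $T_1,T_2$ with $\mathrm{dist}(\mathrm{supp}\,\gamma_1,\mathrm{supp}\,\gamma_2)\geq N/2-1$, and an $i$-chain $\rho\in C_i(P;\F_q)$ with $\partial\rho=\gamma_1-\gamma_2$; after a half-integer shift these are genuine Polyakov loops. Proposition~\ref{prop:wilsondist} applied to $\gamma_1-\gamma_2$ (which is null-homologous in $P$ by construction) yields $W_{\gamma_1-\gamma_2}(f)\equiv 1$ conditional on $P$, and since $W$ is multiplicative along $\F_q$-linear combinations of cycles this is equivalent to $W_{\gamma_1}(f)=W_{\gamma_2}(f)$, i.e.\ $f(\gamma_1)=f(\gamma_2)$. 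The giant $i$-cycle of $P$ used to produce $\rho$ then literally ``rules'' the coincidence of the Polyakov values.

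The main obstacle I anticipate is bookkeeping rather than any new probabilistic or topological input: the preceding Proposition only delivers a pair $(\gamma_1,\gamma_2)$ for a single giant $i$-cycle, so to conclude that \emph{all} Polyakov loops in a given homology class take a common value one has to iterate along a basis of giant cycles and intersect the resulting (polynomially many) high-probability events. The self-dual case $d=2i$ is then just the specialization where Theorem~\ref{thm:half} replaces Theorem~\ref{thm:weak} and pins $\beta_{\mathrm{surf}}$ to $\log(1+\sqrt{q})$ via $p_{\mathrm{sd}}=\sqrt{q}/(1+\sqrt{q})$.
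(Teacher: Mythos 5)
Your proof is essentially correct and matches the argument the paper intends (the paper simply states ``The following is now a corollary of Theorems~\ref{thm:half} and~\ref{thm:weak}'' without writing it out). The chain of reasoning --- couple $(f,P)$ via $\kappa$, invoke the homological-percolation theorems to get $S$ with high probability, feed the resulting giant $i$-cycles into the preceding Proposition to obtain homologous Polyakov pairs, and then use Proposition~\ref{prop:wilsondist} (or equivalently Lemma~\ref{lemma:constclass} and its corollary on homologous Wilson loops) on the null-homologous difference $\gamma_1-\gamma_2$ --- is the right one. Two small remarks are worth making. First, you implicitly read the hypothesis as placing $\beta$ \emph{above} $\beta_{\mathrm{surf}}$; that is the correct direction, since $p=1-e^{-\beta}$ is increasing in $\beta$ and $p>\lambda$ is needed for $S$ to hold, so the paper's stated ``$\beta<\beta_{\mathrm{surf}}$'' is a sign typo (it is consistent with Conjecture~\ref{conj:sharpness}, where the perimeter-law/hypersurface regime is $\beta>\beta_c$, and with $\beta_{\mathrm{sd}}=\log(1+\sqrt{q})$ computed from $p_{\mathrm{sd}}$). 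Second, once $S$ holds, all $\binom{d}{i}$ standard homology directions are simultaneously represented, so you do not actually need a union bound over many high-probability events --- the ``iteration'' you anticipate is a single application of the Proposition to each direction on the one event $S$, followed by Lemma~\ref{lemma:constclass} to extend the coincidence of values from the specific constructed pair to any pair of Polyakov loops that are homologous in $P$.
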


\subsection{Some Probabilistic Tools}

Though the random-cluster model is more difficult to work with than its Bernoulli counterpart, useful tools have been developed that work in both settings, particularly in recent years. We will use several of these results, both for graph and for higher dimensional cubical complexes.

When comparing different critical probabilities, it is essential to be able to compare the probabilities of events in different percolation models. This is easy to do when we can show that one model has strictly more open edges or plaquettes in a probabilistic sense. Let $E$ be a finite set and let $\mu_1,\mu_2$ be probability measures on $\Omega = \set{0,1}^E.$ We say that $\mu_1$ is stochastically dominated by $\mu_2$ if there is a probability measure $\kappa$ on $\Omega \times \Omega$ with first and second marginals $\mu_1$ and $\mu_2$ so that 
\[\kappa\paren{\set{\paren{\omega_1,\omega_2}: \omega_1 \leq \omega_2}} = 1\,.\]
In this case we write $\mu_1 \leqst \mu_2.$ We will use a version of Holley's theorem that gives a criterion for stochastic domination in terms of conditional probabilities. This can be found as Theorem 2.3 of~\cite{grimmett2006random}.

\begin{Theorem}[Holley]\label{thm:stochdom}
Let $E$ be a finite set and let $\mu_1,\mu_2$ be strictly positive probability measures on $\Omega = \set{0,1}^E.$ Suppose that for each pair $\xi,\zeta \in \Omega$ with $\xi \leq \zeta$ and each $e_0 \in E,$
\begin{align*}
    &\mu_1\paren{\omega\paren{e_0} = 1 : \omega\paren{e} = \xi\paren{e} \text{ for all } e \in E \setminus \set{e_0}}\\ 
    &\qquad\leq \mu_2\paren{\omega\paren{e_0} = 1 :  \omega\paren{e} = \zeta\paren{e} \text{ for all } e \in E \setminus \set{e_0}}\,.
\end{align*}
Then $\mu_1 \leqst \mu_2.$
\end{Theorem}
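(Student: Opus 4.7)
The plan is to prove Holley's inequality via a Markov chain coupling argument on the product space, constructing the desired coupling $\kappa$ as the stationary distribution of an ergodic process on the ordered pairs of $\Omega \times \Omega$.

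First, I would define a continuous-time Markov chain $\paren{X_t, Y_t}$ on the restricted state space $\Omega_{\leq} \coloneqq \set{\paren{\omega_1,\omega_2} \in \Omega \times \Omega : \omega_1 \leq \omega_2}$. The dynamics are Glauber-type: at rate $1$ per coordinate $e_0 \in E$, I simultaneously resample $X_t\paren{e_0}$ and $Y_t\paren{e_0}$ using a common uniform random variable $U \in \brac{0,1}$. Specifically, letting $\xi$ denote the current state of $X_t$ restricted to $E\setminus\set{e_0}$ and $\zeta$ the current state of $Y_t$ restricted to $E\setminus\set{e_0}$, I would set
\[X_t\paren{e_0} = \mathbbm{1}\brac{U \leq \mu_1\paren{\omega\paren{e_0}=1 \mid \omega\restriction_{E\setminus\set{e_0}} = \xi}}\,,\]
and analogously for $Y_t\paren{e_0}$ using $\mu_2$ and $\zeta$. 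Since $\xi \leq \zeta$ by the preservation of order and the hypothesis gives
\[\mu_1\paren{\omega\paren{e_0}=1 \mid \omega\restriction_{E\setminus\set{e_0}} = \xi} \leq \mu_2\paren{\omega\paren{e_0}=1 \mid \omega\restriction_{E\setminus\set{e_0}} = \zeta}\,,\]
the common-uniform update yields $X_t\paren{e_0} \leq Y_t\paren{e_0}$, so the chain remains in $\Omega_{\leq}$.

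Next, I would verify that each marginal chain, viewed on its own, is the single-site Glauber chain for the respective measure. Since $\mu_1$ and $\mu_2$ are strictly positive, each marginal chain is irreducible on $\Omega$ with unique invariant measure $\mu_1$ (respectively $\mu_2$). I would then show that the coupled chain on $\Omega_{\leq}$ is itself irreducible: from any ordered pair, one can reach $\paren{\mathbf{0},\mathbf{1}}$ by flipping all coordinates of $X$ down and all coordinates of $Y$ up (each transition has positive probability by strict positivity), and from $\paren{\mathbf{0},\mathbf{1}}$ one can reach any ordered pair. Since the state space is finite, there exists a unique stationary distribution $\kappa$ on $\Omega_{\leq}$, and by uniqueness of the marginal stationary measures, $\kappa$ must have first marginal $\mu_1$ and second marginal $\mu_2$. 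Because $\kappa$ is supported on $\Omega_{\leq}$, this is precisely the monotone coupling certifying $\mu_1 \leqst \mu_2$.

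The main technical obstacle is verifying that the proposed coupled dynamics genuinely has the correct marginals; a priori, conditioning on $X_t \leq Y_t$ could bias each component. The resolution is that the rule I described updates $X_t\paren{e_0}$ using only the conditional law determined by $\xi$ and an independent uniform variable, so the marginal law of $\paren{X_t}$ is exactly the standard single-site Glauber chain for $\mu_1$ regardless of what $Y_t$ does; symmetrically for $\paren{Y_t}$. Once this is checked carefully, the result follows from standard finite-state ergodic theory. I would also note that this proof does not require additional structure on $E$ and applies directly in the settings used in the paper, in particular to the plaquette random-cluster conditional probabilities appearing in Lemma~\ref{lemma:stochasticdom}.
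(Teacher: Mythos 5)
Your approach is the standard one for Holley's theorem---construct a coupled Glauber chain on the ordered pairs using a common uniform variable, then read off the coupling from a stationary distribution---and it is the same route taken in the cited reference (Theorem~2.3 of Grimmett's \emph{The Random-Cluster Model}). The update rule, the preservation of the order $X_t\leq Y_t$ via the common uniform and the Holley condition, and the observation that each marginal chain is the single-site Glauber chain for $\mu_1$ (resp.\ $\mu_2$) are all correct.

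The one flawed step is the irreducibility argument. In the coupled chain the coordinates $X_t(e_0)$ and $Y_t(e_0)$ are resampled \emph{simultaneously} with a single shared $U$, so you cannot ``flip all coordinates of $X$ down and all coordinates of $Y$ up'' independently: the transition from $(X_t(e_0),Y_t(e_0))=(1,1)$ or $(0,0)$ to $(0,1)$ requires $\mu_1(\omega(e_0)=1\mid\xi)<U\leq\mu_2(\omega(e_0)=1\mid\zeta)$, which has positive probability only when the Holley inequality is \emph{strict} at that configuration. When equality holds everywhere (e.g.\ $\mu_1=\mu_2$) the diagonal $\set{(\omega,\omega)}$ is absorbing, so the coupled chain is genuinely not irreducible on $\Omega_\leq$, and there is no unique stationary distribution. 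Fortunately, you do not need irreducibility or uniqueness. Since $\Omega_\leq$ is finite there exists at least one stationary distribution $\kappa$. Because the $X$-marginal process is exactly the $\mu_1$-Glauber chain, a short computation shows the first marginal $\kappa_1$ of any stationary $\kappa$ satisfies $\kappa_1 P_1=\kappa_1$; the $\mu_1$-Glauber chain is irreducible by strict positivity of $\mu_1$, so $\kappa_1=\mu_1$, and symmetrically $\kappa_2=\mu_2$. Replacing the irreducibility claim by this ``any stationary distribution has the right marginals'' argument closes the gap and the proof goes through.
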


We will also employ a sharp threshold theorem for monotonic measures due to Graham and Grimmett~\cite{graham2006influence}.

\begin{Theorem}[Graham and Grimmett]\label{thm:grahamgrimmett}
There exists a constant $0 < \newconstant\label{const:6} < \infty$ so that the following holds. Let $N \geq 1,$ $I = \set{1,\ldots,N},$ $\Omega = \set{0,1}^N,$ and let $\mathcal{F}$ be the set of subsets of $\Omega.$ Let $A \in \mathcal{F}$ be an increasing event. Let $\mu$ be a positive monotonic probability measure on $(\Omega, \mathscr{F}).$ Let $X_i = \omega\paren{i}$ and set $p = \mu\paren{X_i=1}.$ If there exists a subgroup $\mathcal{A}$ of the symmetric group on $N$ elements $\Pi_N$ acting transitively on $I$ so that $\mu$ and $A$ are $\mathcal{A}$-invariant, then 
\begin{align*}
    \frac{d}{dp}\mu_p\paren{A} \geq \frac{\oldconstant{const:6}\mu_p\paren{X_1}\paren{1-\mu_p\paren{X_1}}}{p\paren{1-p}}\min\set{\mu_p\paren{A},1-\mu_p\paren{A}}\log N\,.
\end{align*}
\end{Theorem}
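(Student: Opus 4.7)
The plan is to prove this sharp threshold theorem by combining three ingredients: a Margulis--Russo type derivative identity for the monotonic family $\mu_p$, a BKKKL--style influence inequality adapted to monotonic measures, and the transitivity of the group $\mathcal{A}$.

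First I would write down a Margulis--Russo formula for monotonic measures. For the classical Bernoulli product measure one has $\frac{d}{dp}\mu_p(A) = \frac{1}{p(1-p)}\sum_i \mathrm{Cov}_{\mu_p}(\mathbf{1}_A, X_i)$, and an analogous identity is known to hold for monotonic families obtained by tilting a positively associated base measure by a Bernoulli density (compare Theorem~2.52 in Grimmett's random-cluster book). The covariance $\mathrm{Cov}_{\mu_p}(\mathbf{1}_A, X_i)$ takes the role of the ``influence'' $I_i(A)$ of coordinate $i$. For increasing $A$ positive association ensures these covariances are non-negative, so the derivative is a sum of non-negative influences.

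Second, the main analytic step is a BKKKL--type inequality adapted to positive monotonic measures: there exists a universal $c>0$ such that
\[\sum_{i=1}^N I_i(A) \;\geq\; c \, \mathrm{Var}_{\mu}(\mathbf{1}_A) \, \log\!\paren{1/\max_i I_i(A)}.\]
For product measures this is the classical Kahn--Kalai--Linial / Bourgain--Kahn--Kalai--Katznelson--Linial theorem proved via hypercontractivity or tensorization of entropy. The extension to monotonic measures (which is the technical content of Graham--Grimmett) proceeds by a randomisation/encoding trick: one represents $\mu$ as the push-forward of a product of i.i.d.\ $\mathrm{Unif}[0,1]$ variables $U_1,\dots,U_N$ under a coordinate-wise monotone map $F:[0,1]^N\to\{0,1\}^N$, applies the BKKKL inequality on the product space $([0,1]^N, \lambda^{\otimes N})$ to the event $F^{-1}(A)$, and then uses the monotonicity of $F$ (which is the point at which the hypothesis that $\mu$ is \emph{monotonic} really bites) to translate the resulting continuous influences back into a bound on $I_i(A)$ with only an overall multiplicative constant lost.

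Third, because $\mathcal{A}$ acts transitively on $I$ and both $\mu$ and $A$ are $\mathcal{A}$--invariant, the measure and event are unchanged under relabelling by any $\sigma\in\mathcal{A}$, so all influences are equal: $I_i(A)=I_1(A)$ for every $i$. Hence $\max_i I_i(A)=I_1(A)$ and $\sum_i I_i(A)=N\, I_1(A)$. Inserting these into the influence inequality gives $N\, I_1(A)\geq c\,\mathrm{Var}_\mu(\mathbf{1}_A)\log(1/I_1(A))$. Using $\mathrm{Var}_\mu(\mathbf{1}_A)\geq \tfrac12 \min\{\mu_p(A),1-\mu_p(A)\}$ and converting the sum of influences back via the Margulis--Russo identity (which is where the factor $\mu_p(X_1)(1-\mu_p(X_1))/[p(1-p)]$ enters, as the Jacobian of the parameterisation) yields the stated differential inequality after a short manipulation of the $\log(1/I_1)$ term --- one just absorbs the logarithmic factor into $\log N$ using $I_1(A)\leq 1$.

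The main obstacle is the BKKKL step in the monotonic setting: the standard hypercontractive proof is intrinsically tied to independence, so the technical heart of the argument is the monotone encoding $F$ and the verification that it is measurable, coordinate-wise monotone, and that the influences transfer with only an $O(1)$ loss. Everything else --- Margulis--Russo, invariance under $\mathcal{A}$, and the final bookkeeping --- is routine once that inequality is in hand.
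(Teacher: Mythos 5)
This statement is quoted in the paper as a theorem of Graham and Grimmett, cited to~\cite{graham2006influence}; the paper gives no proof of its own, so there is nothing internal to compare your argument against. What you have produced is a reconstruction of the argument in Graham and Grimmett's paper itself, and the high-level outline is essentially the right one: a Margulis--Russo-type differential identity for a monotonic one-parameter family, an influence inequality for positive monotonic measures obtained by a monotone encoding reducing to the product case, and symmetrisation via the transitive group action to equalise the influences.

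Two points where your sketch glosses over content that is actually load-bearing. First, the factor $\mu_p(X_1)(1-\mu_p(X_1))/\brac{p(1-p)}$ is not best thought of as a ``Jacobian''; in Graham--Grimmett it appears because the Russo-type identity gives $\frac{d}{dp}\mu_p(A) = \frac{1}{p(1-p)}\sum_e \mathrm{Cov}_{\mu_p}(\mathbf{1}_A,\omega_e)$, and a separate lemma (using positive association and the monotone conditional structure) shows $\mathrm{Cov}_{\mu_p}(\mathbf{1}_A,\omega_e) \geq c\,\mu_p(\omega_e)\paren{1-\mu_p(\omega_e)}I_e(A)$; the factor comes from that covariance-versus-influence bound, not from reparameterisation. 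Second, the passage from the inequality $N I_1 \geq c\,\mathrm{Var}_\mu(\mathbf{1}_A)\log(1/I_1)$ to the stated bound with $\log N$ requires the usual dichotomy (either $I_1$ is large enough that $N I_1$ already dominates $\log N$, or $I_1$ is small enough that $\log(1/I_1) \gtrsim \log N$); ``absorbing the logarithm because $I_1 \leq 1$'' as written gives the inequality in the wrong direction and needs to be replaced by that case split. With those two repairs your outline matches the published proof.
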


In the 1-dimensional case, we will use the recent breakthrough results of Duminil-Copin, Raoufi, and Tassion characterizing the subcritical and supercritical regimes of the random-cluster model.

\begin{Theorem}[Duminil-Copin, Raoufi, Tassion]\label{thm:expdecay}
Fix $d \geq 2$ and $q \geq 1.$ Let $\theta(p) = \mu^{\mathbf{w}}_{\Z^d,p,q,1}\paren{0 \leftrightarrow \infty}.$ Then 
\begin{itemize}
    \item there exists a $\newconstant\label{const:7} > 0$ so that $\theta\paren{p} \geq \oldconstant{const:7}\paren{p-\hat{p}_c}$ for any $p \geq p_c$ sufficiently close to $p_c;$
    \item for any $p < \hat{p}_c,$ there exists a $c_p$ so that for every $n \geq 0,$
    \[ \mu^{\mathbf{w}}_{\Lambda_n,p,q,1}\paren{0 \leftrightarrow \partial \Lambda_n} \leq \exp\paren{-c_p n}\,.\]
\end{itemize}
\end{Theorem}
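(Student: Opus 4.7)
The plan is to follow the decision-tree/OSSS approach introduced by Duminil-Copin, Raoufi, and Tassion in their proof of sharpness for the classical random-cluster model. Let $\theta_n\paren{p} \coloneqq \mu^{\mathbf{w}}_{\Lambda_n,p,q,1}\paren{0 \leftrightarrow \partial \Lambda_n}$. The goal is to derive a differential inequality of the form
\[
\theta_n'\paren{p} \;\geq\; \frac{c\,n}{\Sigma_n\paren{p}}\,\theta_n\paren{p}\paren{1-\theta_n\paren{p}}, \qquad \Sigma_n\paren{p} \coloneqq \sum_{k=0}^{n-1} \theta_k\paren{p},
\]
and then extract both conclusions from the standard ODE comparison argument: either $\Sigma_n$ stays bounded (yielding exponential decay of $\theta_n$), or $\Sigma_n \to \infty$ in a controlled way (yielding the mean-field lower bound $\theta\paren{p}\geq c\paren{p-\hat{p}_c}$ upon taking $n\to\infty$ and integrating from $\hat{p}_c$).

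First I would record a Russo-type formula for the random-cluster model expressing $\theta_n'\paren{p}$ as a sum over edges of (suitably reweighted) pivotal probabilities/influences $\mathrm{Inf}_e\paren{\theta_n}$. The main analytic tool is the OSSS inequality for a monotone Boolean function $f$ determined by a randomized decision tree $T$:
\[
\mathrm{Var}_\mu\paren{f} \;\leq\; \sum_{e} \delta_e\paren{T}\,\mathrm{Inf}_e\paren{f},
\]
where $\delta_e\paren{T}$ is the probability that $T$ queries edge $e$. Since $\mu^{\mathbf{w}}_{\Lambda_n,p,q,1}$ is not a product measure, this must be applied in the monotone coupling of the random-cluster model with a product of uniforms on $\brac{0,1}^E$, using positive association (Theorem~\ref{thm:i-FKG}) and the finite energy property to transfer the bound back to $\mu^{\mathbf{w}}_{\Lambda_n,p,q,1}$.

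Next I would design the decision tree. For the crossing event $\set{0 \leftrightarrow \partial \Lambda_n}$, the standard choice is to pick $k \in \set{0,1,\ldots,n-1}$ uniformly at random and then reveal edges starting from $\partial \Lambda_k$ by a boundary exploration that halts once the connected cluster of $\partial \Lambda_k$ in the explored set has been determined. The revealment of an edge $e$ at distance $r$ from the origin is then bounded by $\frac{1}{n}\sum_{k} \mu^{\mathbf{w}}\paren{e \leftrightarrow \partial \Lambda_k}$, which by translation invariance and monotonicity is controlled by $\Sigma_n\paren{p}/n$. Plugging this revealment bound into OSSS and combining with the Russo formula yields the displayed differential inequality.

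The main obstacle is the non-product nature of the measure: OSSS as originally stated applies to product measures, and extending it to the random-cluster setting requires the monotone coupling together with careful bookkeeping so that the revealment of an edge in the coupling still controls its influence under $\mu^{\mathbf{w}}$. Once the differential inequality is in hand, the dichotomy follows by a standard ODE argument: define $\hat{p}_c$ as the supremum of $p$ with $\Sigma_\infty\paren{p} < \infty$; for $p < \hat{p}_c$ the bound $\theta_n \leq \exp\paren{-c_p n}$ follows from iterating the inequality, while for $p > \hat{p}_c$ one integrates $\theta'/\paren{\theta\paren{1-\theta}} \geq c$ from $\hat{p}_c$ to $p$ to obtain the linear lower bound, taking $n\to\infty$ with the help of monotone convergence and the identification $\lim_n \theta_n\paren{p} = \theta\paren{p}$.
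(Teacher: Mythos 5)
The paper does not supply a proof of this statement: it is stated as a known theorem of Duminil-Copin, Raoufi, and Tassion and used as a black box, with the actual argument living in the cited reference \cite{duminil2019sharp}. So there is no internal proof for your sketch to be compared against. That said, your outline does faithfully track the strategy of that reference, and it is worth a brief remark on accuracy.

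Your account of the decision-tree/OSSS mechanism, the choice of uniform radius $k$ and boundary exploration from $\partial\Lambda_k$, the revealment bound via $\Sigma_n(p)/n$, the resulting differential inequality for $\theta_n$, and the dichotomy argument (bounded $\Sigma_n$ gives exponential decay; otherwise integrate to get the mean-field lower bound) all match the published proof. One technical point is slightly off in emphasis: you describe applying the classical product-measure OSSS inequality through a monotone coupling with i.i.d.\ uniforms and ``transferring back'' via positive association and finite energy. What Duminil-Copin, Raoufi, and Tassion actually do is prove a generalization of the OSSS inequality that holds directly for monotonic measures (measures satisfying the FKG lattice condition), so no transfer argument through a product coupling is needed; the revealment and influence quantities are defined intrinsically for the random-cluster measure, and the inequality is established by an inductive argument mirroring the original OSSS proof. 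Either formulation can be made rigorous, but the paper's cited source takes the intrinsic route, which avoids the bookkeeping you flag as the main obstacle. Beyond that, your sketch is a sound summary of the argument that the paper is invoking.
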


Lastly, we will use a result of Kahle and the authors on vector-valued random variables whose range is an irreducible representation of the symmetry group of the probability space.

\begin{Lemma}[Duncan, Kahle, and Schweinhart]\label{lemma:spinning}
Let $V$ be a finite dimensional vector space and $Y$ be a set. Let $\mathcal{A}$ be the lattice of subspaces of $V.$ Suppose $h : \mathcal{P}\paren{Y} \to \mathcal{A}$ is an increasing function, i.e. if $A \subset B$ then $h\paren{A} \subset h\paren{B}.$ Let $\mathcal{G}$ be a finite group which acts on both $Y$ and $V$ whose action is compatible with $h.$ That is, for each $g \in \mathcal{G}$ and $v\in V,$  $g\paren{h\paren{v}} = h\paren{gv}.$ Let $X$ be a $\mathcal{P}\paren{Y}$-valued random variable with a $\mathcal{G}$-invariant distribution that is positively associated, meaning that increasing events with respect to $X$ are non-negatively correlated. Then if $V$ is an irreducible representation of $\mathcal{G}$, there are positive constants $\newconstant\label{const:8}, \newconstant\label{const:9}$ so that  $$\mathbb{P}_p\paren{h(X) = V} \geq \oldconstant{const:8}\mathbb{P}_p\paren{h(X) \neq 0}^{\oldconstant{const:9}}\,,$$ where $\oldconstant{const:8}$ only depends on $\mathcal{G}$ and $\oldconstant{const:9}$ only depends on $\dim V.$
\end{Lemma}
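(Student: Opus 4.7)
The plan is to combine the positive association of $X$ with the irreducibility of $V$ as a $\mathcal{G}$-representation via an orbit-spanning argument. The starting observations are that for any fixed subspace $W \subseteq V$, the event $\set{h(X) \supseteq W}$ is increasing in $X$ because $h$ is monotone, and that the $\mathcal{G}$-invariance of the distribution of $X$ together with the compatibility $h(gA) = g\,h(A)$ yields $\mathbb{P}\paren{h(X) \supseteq W} = \mathbb{P}\paren{h(X) \supseteq gW}$ for every $g \in \mathcal{G}$. These two properties are the only inputs from the hypothesis that I will use.

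First I would pass from $\set{h(X) \neq 0}$ to the containment of a single line. Writing
\[\set{h(X) \neq 0} = \bigcup_{L} \set{h(X) \supseteq L}\,,\]
the union being over $1$-dimensional subspaces of $V$, a union bound produces a line $L_0$ with $\mathbb{P}\paren{h(X) \supseteq L_0} \geq \mathbb{P}\paren{h(X) \neq 0}/N$, where $N$ is the number of lines in $V$ (finite in the applications of interest, where the coefficient field is $\F_q$). Next, since $V$ is irreducible, the span of the orbit $\mathcal{G} \cdot L_0$ is a nonzero $\mathcal{G}$-invariant subspace of $V$ and hence must equal $V$; choose elements $g_1,\ldots,g_k \in \mathcal{G}$ with $k \leq \dim V$ such that $g_1 L_0, \ldots, g_k L_0$ together span $V$.

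Now because $h(X)$ is always a subspace, once it contains a spanning set of $V$ it equals $V$, so
\[\set{h(X) = V} \supseteq \bigcap_{j=1}^{k} \set{h(X) \supseteq g_j L_0}\,.\]
Applying positive association to these increasing events and then using $\mathcal{G}$-invariance to identify their probabilities yields
\[\mathbb{P}\paren{h(X) = V} \geq \prod_{j=1}^{k} \mathbb{P}\paren{h(X) \supseteq g_j L_0} = \mathbb{P}\paren{h(X) \supseteq L_0}^{k} \geq \paren{\frac{\mathbb{P}\paren{h(X) \neq 0}}{N}}^{\dim V}\,,\]
where in the last step I weakened the exponent from $k$ to $\dim V$ (valid because the probability is at most $1$). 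Setting $c_1 = N^{-\dim V}$ and $c_2 = \dim V$ gives the claimed inequality with $c_1$ depending only on $\mathcal{G}$ and its chosen representation on $V$, and $c_2$ depending only on $\dim V$.

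The main subtlety I expect is controlling the first step: the union bound must be over a collection whose cardinality depends only on $\mathcal{G}$. In the intended setting, where $V$ is a finite-dimensional vector space over $\F_q$, the number of lines is automatically finite and a function only of $V$, so this is immediate. If one wanted a version of the lemma valid over an infinite coefficient field, one would instead have to exploit that, because $X$ is supported on the finite set $Y$, the set $\set{h(A) : A \subseteq Y}$ of possible values is contained in a finite union of $\mathcal{G}$-orbits of subspaces, so the same line $L_0$ can be chosen from a finite family and the rest of the argument goes through unchanged.
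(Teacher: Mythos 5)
Your argument is correct and, as far as I can tell, matches the strategy behind the cited lemma: isolate a single subspace whose containment in $h(X)$ has probability comparable to $\mathbb{P}(h(X)\neq 0)$, translate it by $\mathcal{G}$ until its $\mathcal{G}$-orbit spans $V$ (which uses irreducibility), and then hit the resulting intersection of increasing events with positive association and $\mathcal{G}$-invariance. The greedy count $k=\dim V$ is right: if $\mathrm{span}(g_1L_0,\ldots,g_jL_0)\neq V$, irreducibility forces some $g_{j+1}L_0$ to fall outside it, so the dimension strictly increases at each step. The FKG step is also sound since each $\{h(X)\supseteq g_jL_0\}$ is increasing.

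Two small points worth flagging. First, the constant you obtain is $c_1=N^{-\dim V}$ with $N$ the number of lines in $V$, which depends on the coefficient field (and $\dim V$), not only on $\mathcal{G}$; this is a mild mismatch with the stated dependence ``$c_1$ only depends on $\mathcal{G}$,'' but since the $\mathcal{G}$-module $V$ is fixed in all applications this is harmless. Second, for an infinite coefficient field your repair is not quite complete as phrased: the possible values $\{h(A):A\subseteq Y\}$ are indeed finite when $Y$ is finite, but the number of $\mathcal{G}$-orbits among them can grow with $|Y|$, so the constant extracted from a union bound over that family would not a priori be uniform in $Y$. For the purposes of this paper this does not matter, since one can compute the rank of $\phi_*$ over the prime field of $\F$, and when that prime field is finite your line-counting bound applies directly with a constant independent of $N$.
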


\subsection{Proof of Theorem~\ref{thm:half}}

We recall that $A$ is the event that the map on homology induced by the inclusion $P \hookrightarrow \T^d$ is nonzero and $S$ is the event that the same map is surjective. Note that by the intermediate value theorem we may define a threshold function $\lambda=\lambda\paren{N}$ so that $\mu_{\T^d_N,\lambda,q,i}\paren{A} = 1/2$ for each $N.$ We begin by showing a general threshold result for homological percolation. The proof is very similar to that in~\cite{duncan2020homological}.

\begin{Proposition}\label{prop:sharp}
Let $q$ be an odd prime and $1\leq i \neq d-1.$ For any $\epsilon > 0,$ we have
\begin{align*}
    \mu_{\T^d_N,\lambda-\epsilon,q,i}\paren{A} \to 0
\end{align*}
and
\begin{align*}
    \mu_{\T^d_N,\lambda+\epsilon,q,i}\paren{S} \to 1
\end{align*}
as $N \to \infty.$
\end{Proposition}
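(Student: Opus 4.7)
The plan is to establish a sharp threshold for the weaker event $A$ via a Graham--Grimmett argument, then apply Lemma~\ref{lemma:spinning} to obtain a lower bound on $\mu_p(S),$ and finally reapply Graham--Grimmett to $S$ itself.

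First I would verify that both $A$ and $S$ are monotone increasing events: any giant $i$-cycle in a subcomplex $P$ remains a cycle with the same image in $H_i(\T^d_N;\F)$ in every $P'\supseteq P.$ The measure $\mu_{\T^d_N,p,q,i}$ is invariant under the combinatorial symmetry group $G$ of $\T^d_N$ generated by unit translations, coordinate permutations, and coordinate reflections, and this group acts transitively on $i$-plaquettes. Together with the finite-energy property of the random-cluster model --- which bounds the marginal probability that a fixed plaquette is open in a compact subset of $(0,1)$ uniformly in $N$ for $p$ away from $\{0,1\}$ --- Theorem~\ref{thm:grahamgrimmett} will yield, for any increasing $G$-invariant event $E,$
\[\frac{d}{dp}\mu_p(E)\geq c(p,q,i,d)(\log N)\min\{\mu_p(E),\,1-\mu_p(E)\}\]
with $c>0$ uniform in $N.$ Taking $E=A$ and integrating this inequality shows that $\mu_p(A)$ climbs from near $0$ to near $1$ within a window of width $O(1/\log N)$ around $\lambda,$ so $\mu_{\lambda-\epsilon}(A)\to 0$ and $\mu_{\lambda+\epsilon/2}(A)\to 1.$

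Next, to upgrade from $A$ to $S,$ I would set $V=H_i(\T^d_N;\F)\cong \Lambda^i(\F^d)$ and $h(\omega)=\mathrm{image}(\phi_*)\subseteq V,$ an increasing and $G$-equivariant subspace-valued function of the configuration. The hyperoctahedral subgroup $B_d\leq G$ acts on $V$ via its natural action on $\Lambda^i(\F^d).$ Since $\mathrm{char}(\F)\neq 2,$ each basis vector $e_I$ (indexed by $I\in\binom{[d]}{i}$) is a joint eigenvector of the sign-flip torus $(\Z/2\Z)^d$ with character $(\epsilon_1,\ldots,\epsilon_d)\mapsto\prod_{j\in I}\epsilon_j,$ and these characters are pairwise distinct --- so the $\binom{d}{i}$ weight spaces are one-dimensional and are permuted transitively by $S_d.$ This proves that $V$ is an irreducible $B_d$-representation, and Lemma~\ref{lemma:spinning} then produces constants $c_1,c_2>0$ with
\[\mu_p(S)=\mu_p(h=V)\geq c_1\mu_p(h\neq 0)^{c_2}=c_1\mu_p(A)^{c_2},\]
so $\mu_{\lambda+\epsilon/2}(S)\geq c_1/2$ for $N$ sufficiently large. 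Since $S$ is itself increasing and $G$-invariant, a second application of the differential inequality with $E=S,$ integrated from $\lambda+\epsilon/2$ to $\lambda+\epsilon,$ then gives $\mu_{\lambda+\epsilon}(S)\to 1.$

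The main obstacle will be identifying a symmetry group under which $V=\Lambda^i(\F^d)$ is irreducible, since the bare permutation group $S_d$ does not suffice (e.g. for $d=2,\,i=1$ it splits $\F^2$ into the trivial and sign representations). The full hyperoctahedral group $B_d$ resolves this precisely because $\mathrm{char}(\F)\neq 2$ allows the sign-flip torus to separate the standard basis into pairwise distinct weight spaces --- which also explains the characteristic hypothesis in the Proposition. The remaining ingredients (finite-energy bound, monotonicity of $A$ and $S,$ and the integration of the Graham--Grimmett differential inequality) are essentially routine.
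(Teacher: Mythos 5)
Your proposal is correct, but it takes a genuinely different route from the paper for the subcritical direction. The paper's proof (i) applies Lemma~\ref{lemma:spinning} directly at $p=\lambda$ to obtain $\mu_{\T^d_N,\lambda}\paren{S}\geq\delta>0$ uniformly in $N,$ (ii) runs the Graham--Grimmett differential inequality on $S$ from $\lambda$ upward to conclude $\mu_{\lambda+\epsilon}\paren{S}\to 1,$ and (iii) handles $\mu_{\lambda-\epsilon}\paren{A}\to 0$ by passing to the $(d-i)$-dimensional dual model via Theorem~\ref{theorem:duality} and Equation~\ref{eq:2}, applying the supercritical conclusion there, and translating back. You instead run Graham--Grimmett on $A$ itself first: since $\mu_\lambda(A)=1/2$ is pinned by definition of $\lambda,$ integrating the differential inequality on both sides of $\lambda$ gives $\mu_{\lambda-\epsilon}\paren{A}\to 0$ and $\mu_{\lambda+\epsilon/2}\paren{A}\to 1$ in one stroke, and then a Lemma~\ref{lemma:spinning} step followed by a second Graham--Grimmett run on $S$ finishes the supercritical direction. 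Your version sidesteps the duality machinery entirely for the subcritical bound, which is cleaner and more self-contained; the paper's version is slightly more economical (one spinning step, one Graham--Grimmett step, plus duality) and fits naturally into the duality framework that it develops and reuses elsewhere (Corollary~\ref{cor:sharp}, Proposition~\ref{prop:duality}). Both are sound. Your sketch of irreducibility of $\Lambda^i(\F^d)$ under the hyperoctahedral group via the distinct weight spaces of the sign-flip torus is exactly the content of the cited Proposition~13 of~\cite{duncan2020homological}, so no gap there either.
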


\begin{proof}
We prove the second statement first. Note that increasing events with respect to $\mu_{\T^d_N}$ are positively correlated by Theorem~\ref{thm:i-FKG} and that the group of symmetries of the torus contains an irreducible representation of $H_i\paren{\T^d_N}$ (see Proposition 13 of~\cite{duncan2020homological} for a proof). Then by Lemma~\ref{lemma:spinning},
\begin{align*}
    \mu_{\T^d_N}\paren{S} \geq \oldconstant{const:8}\mu_{\T^d_N}\paren{A}^{\oldconstant{const:9}}\,,
\end{align*}
where $\oldconstant{const:8},\oldconstant{const:9}$ do not depend on $N.$
In particular there is a $\delta>0$ so that $\mu_{\T^d_N,\lambda,q,i}\paren{S} \geq \delta$ for all $N.$

Now we show that $\mu_{\T^d_N}\paren{S}$ has a sharp threshold. By Theorem~\ref{thm:i-FKG}, $\mu_{\T^d_N}$ satisfies the FKG lattice condition and is thus monotonic~\cite{grimmett2004random}. Then since the symmetries of $\T^d_N$ act transitively on the plaquettes, we apply Theorem~\ref{thm:grahamgrimmett} to obtain
\begin{align*}
    \frac{d}{dp}\mu_{\T^d_N}\paren{S} &\geq \frac{\oldconstant{const:6}}{q}\min\set{\mu_{\T^d_N}\paren{S},1-\mu_{\T^d_N}\paren{S}}\log \abs{F^i_N}\,.
\end{align*}
In particular, for $p$ close to $\lambda,$ $\frac{d}{dp}\mu_{\T^d_N,p,q,i}\paren{S} \geq \frac{\oldconstant{const:6}\delta}{q}\log \abs{F^i_N}.$ By integrating this inequality, we have $\mu_{\T^d_N,\lambda + \epsilon,q,i}\paren{S} \to 1$ as $N \to \infty.$ 

To obtain the first statement, we apply the same argument to the dual system. Since $\mu_{\T^d_N,p,q,i}\paren{A} = 1/2,$ it follows that $\mu_{\T^d_N,\paren{\lambda}^*,q,i}\paren{A} \geq 1/2$ by Equation~\ref{eq:2}. Then since $p^*$ is continuous as a function of $p,$ the above argument shows that $\mu_{\T^d_N,\paren{\lambda-\epsilon}^*,q,i}\paren{S} \to 1$ as $N\to \infty.$ By another application of Equation~\ref{eq:2}, we then have $\mu_{\T^d_N,\lambda-\epsilon,q,i}\paren{A} \to 0$ as $N \to \infty.$

\end{proof}

\begin{Corollary}\label{cor:sharp}
If $p_0 > p_u\paren{q,i,d},$ then 
\begin{align*}
    \mu_{\T^d_N,p_0,q,i}\paren{S} \to 1
\end{align*}
as $N \to \infty.$ If $p_0 < p_l\paren{q,i,d},$ then
\begin{align*}
    \mu_{\T^d_N,p_0,q,i}\paren{A} \to 0
\end{align*}
as $N \to \infty.$
\end{Corollary}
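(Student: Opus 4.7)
My plan is to derive this corollary directly from Proposition~\ref{prop:sharp} via a standard monotonicity argument. The two preliminary facts I would record are: (i) both $A$ and $S$ are increasing events with respect to $\omega$, because adding open $i$-plaquettes to $P(\omega)$ can only enlarge the image of the inclusion-induced map $\phi_*\colon H_i(P;\F)\to H_i(\T^d_N;\F)$, so non-triviality and surjectivity are both preserved; and (ii) the measure $\mu_{\T^d_N,p,q,i}$ is stochastically increasing in $p$. Fact (ii) follows from Holley's criterion (Theorem~\ref{thm:stochdom}): the single-plaquette conditional probability equals $p/\paren{p+(1-p)q^{\Delta}}$, where $\Delta\in\set{0,1}$ denotes the decrease in $\mathbf{b}_{i-1}$ caused by opening the plaquette, and this expression is non-decreasing both in $p$ and in the surrounding configuration, since a larger surrounding configuration can only decrease $\Delta$ (boundaries in $H_{i-1}$ are preserved when more $i$-plaquettes are opened).

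With these in hand, I would argue the first statement as follows. Given $p_0>p_u\paren{q,i,d}$, set $\epsilon=\tfrac12\paren{p_0-p_u}>0$. Since $p_u=\limsup_{N\to\infty}\lambda\paren{N,q,i,d}$, for all sufficiently large $N$ one has $\lambda\paren{N}<p_u+\epsilon/2$, and hence $\lambda\paren{N}+\epsilon/2<p_0$. Because $S$ is increasing and the measure is monotone in $p$,
\[
\mu_{\T^d_N,p_0,q,i}\paren{S}\;\ge\;\mu_{\T^d_N,\lambda\paren{N}+\epsilon/2,q,i}\paren{S},
\]
and the right-hand side tends to $1$ as $N\to\infty$ by Proposition~\ref{prop:sharp} applied with the fixed parameter $\epsilon/2$.

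The second statement is entirely analogous. Assuming $p_0<p_l\paren{q,i,d}$, I would set $\epsilon=\tfrac12\paren{p_l-p_0}>0$; since $p_l=\liminf_{N\to\infty}\lambda\paren{N,q,i,d}$, one has $\lambda\paren{N}>p_l-\epsilon/2$ for all sufficiently large $N$, so $\lambda\paren{N}-\epsilon/2>p_0$. Using that $A$ is increasing together with monotonicity in $p$,
\[
\mu_{\T^d_N,p_0,q,i}\paren{A}\;\le\;\mu_{\T^d_N,\lambda\paren{N}-\epsilon/2,q,i}\paren{A},
\]
which tends to $0$ by Proposition~\ref{prop:sharp}.

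I do not anticipate any serious obstacle: this corollary is essentially a bookkeeping step that translates the ``moving threshold'' statement of Proposition~\ref{prop:sharp} into statements about fixed sub- and supercritical values of $p$. The only point requiring any care is that the shifts $\pm\epsilon/2$ appearing in Proposition~\ref{prop:sharp} must be constants independent of $N$, which is arranged by fixing $\epsilon$ in terms of $p_0$ and the external limits $p_u$ or $p_l$ before invoking the proposition.
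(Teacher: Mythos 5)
Your argument is correct and is the natural derivation the paper intends (the corollary is stated without an explicit proof, immediately following Proposition~\ref{prop:sharp}). You translate the moving-threshold statement into fixed-$p$ statements via monotonicity in $p$, which is exactly the bookkeeping required. One thing worth noting: the paper never explicitly states that $\mu_{\T^d_N,p,q,i}$ is stochastically increasing in $p$ (Lemma~\ref{lemma:stochasticdom} only addresses monotonicity in $q$), so your careful verification via Holley's criterion and the formula $p/\paren{p+(1-p)q^{\Delta}}$ from Equation~\ref{eq:openprob} — together with the observation that $\Delta$ is non-increasing in the surrounding configuration because boundaries stay boundaries when more plaquettes open — fills a genuine, if small, gap that the paper leaves implicit. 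Your verification that $A$ and $S$ are increasing (via commutativity of the inclusion-induced maps on homology) is likewise correct and necessary.
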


It is now straightforward to prove that the $i$-dimensional random-cluster model in $\T^{2d}$ exhibits a sharp phase transition at the self-dual point.

\begin{proof}
By self duality and Equation~\ref{eq:2}, 
\[\mu_{\T^d_N,p_{\mathrm{sd}},q,i}\paren{A} \geq 1/2\,.\]
In particular, $p_u \leq p_{\mathrm{sd}}.$ Then by monotonicity and Corollary~\ref{cor:sharp}, \[\mu_{\T^d_N}\paren{S} \to 1\]
as $N \to \infty$ for all $p > p_{\mathrm{sd}}.$ Since $p^*$ is decreasing as a function of $p$ with fixed point $p_{\mathrm{sd}},$ applying Equation~\ref{eq:2} again gives 
\[\mu_{\T^d_N}\paren{A} \to 1\]
as $N \to \infty$ for all $p < p_{\mathrm{sd}}.$
\end{proof}

\subsection{Proof of Theorem~\ref{thm:one}} Next, we will prove our theorem on the homological percolation transition for the $1$-dimensional and $(d-1)$-dimensional random-cluster models in $\T^d.$ Throughout this subsection, we fix $i=1,$ and we borrow terminology from percolation on graphs. Let $G$ be a graph and let $V\paren{G}$ and $E\paren{G}$ be its vertex and edge sets respectively. Designate the edges of $E\paren{G}$ as open or closed according to some probability measure. For $v,w \in V\paren{G},$ we write $v \leftrightarrow w$ if $v$ is connected to $w$ by a path of open edges. For a vertex subset $V' \subset V\paren{G},$ we write $v \leftrightarrow_{V'} w$ if $v$ is connected to $w$ by a path of open edges that only passes through vertices of $V'.$



We will prove the statements of Theorem~\ref{thm:one} for the case $i=1$ in each regime separately. The case $i=d-1$ then follows immediately from Equation~\ref{eq:2}. The subcritical case is a straightforward application of the exponential decay of the size of components.

\begin{proof}[Proof of Theorem~\ref{thm:one} (subcritical regime)]
Let $p<\hat{p}_c$ and let $M=\floor{N/2}.$ For a vertex $x$ of $\mathbb{T}_N^d,$ let $A_x$ the event that a giant $1$-cycle passes through $x.$ Since $[-M,M]^d=\Lambda_M$ is contractible in $\T^d_N,$ $A_0 \subset \set{0 \leftrightarrow_{\T^d_N} \partial \Lambda_M}.$ Moreover, $\mu^{\mathbf{w}}_{\Lambda_M}$ stochastically dominates $\mu_{\T^d_N} |_{\Lambda_M},$ so by Theorem~\ref{thm:expdecay} and translation invariance,
 \begin{equation}
     \label{eq:pboundsn1}
     \mu_{\T^d_N}\paren{A_x}\leq \mu^{\mathbf{w}}_{\Lambda_M}\paren{0 \leftrightarrow \Lambda_M} \leq \exp\paren{c_p M}
 \end{equation}
for all vertices $x$ of $\mathbb{T}_N^d.$ 

Let $X$ be the number of vertices in $\mathbb{T}_N^d$ that are contained in a giant $1$-cycle.
Since $A=\set{X\geq 1},$

\begin{align*}
    \mu_{\T^d_N}\paren{A}&=\;\;\mu_{\T^d_N}\paren{X\geq 1}\\
    &\leq\;\; \mathbb{E}_{\mu}\paren{X} && \text{by Markov's Inequality}\\
    &=\;\; \sum_{x\in \mathbb{T}_N^d}\mu_{\T^d_N}\paren{A_x} \\
    & \leq\;\;  N^d e^{-c_p M} &&\text{by Equation ~\ref{eq:pboundsn1}}\\
    & = \;\; N^d e^{-c_p \floor{N/2}} \to 0
\end{align*}
as $N\rightarrow\infty$.

\end{proof}

The supercritical case is more involved and will require assuming Conjecture~\ref{con:slab} --- the continuity of the critical probabilities for the existence of an infinite component for the $1$-dimensional random-cluster model in slabs --- in order to obtain a sharp threshold. Our strategy will be to \say{pin} together paths from supercritical percolation in large slabs in order to form a giant $1$-cycle. 

Let 
\[\Lambda_{n,k} \coloneqq \brac{-n,n}^2 \times \brac{-k,k}^{d-2} \cap \Z^d \subset S_k\,.\]
Let $D_{n,k} \coloneqq \set{v \in \Lambda_{n,k} : v \sim u \text{ for some } u \in S_k \setminus \Lambda_{n,k}}$ be the boundary of $\Lambda_{n,k}$ in $S_k.$ 

\begin{Lemma}\label{lemma:slabsharp}
Fix $q \geq 1, d \geq 2,$ and  $k \geq 1.$ There is a $\newconstant\label{const:10}>0$ so that for any $p > p_c\paren{S_k}$ sufficiently close to $p_c\paren{S_k}$ and any $n$ sufficiently large, 
\[\mu_{\Lambda_{n,k}}^{\mathbf{f}}\paren{0 \leftrightarrow D_{n,k}} \geq \oldconstant{const:10}\paren{p-p_c\paren{S_k}}\,.\]
\end{Lemma}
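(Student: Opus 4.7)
The plan is to run the Duminil-Copin--Raoufi--Tassion sharpness argument~\cite{duminil2019sharp} (the engine behind Theorem~\ref{thm:expdecay}) on the slab $S_k$ in place of the full lattice $\Z^d.$ Because $S_k$ is quasi-transitive --- transitive under the $\Z^2$-translation action on the unbounded directions, together with the finite symmetries of the bounded directions --- and amenable, and because the random-cluster model on $S_k$ satisfies all of the probabilistic inputs of the DCRT proof (positive association via Theorem~\ref{thm:i-FKG}, monotonicity in boundary conditions, and the Margulis--Russo and OSSS formulas), the DCRT argument transfers to $S_k$ essentially verbatim.

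Concretely, I would apply Margulis--Russo to the increasing event $\set{0 \leftrightarrow D_{n,k}}$ under $\mu_{\Lambda_{n,k}}^{\mathbf{f}}$ and bound the sum of pivotal probabilities using an OSSS-type inequality for a decision tree that reveals the cluster of a uniformly chosen vertex of $\Lambda_{n,k}.$ This yields a differential inequality of the schematic form
\[\frac{d}{dp}\mu_{\Lambda_{n,k}}^{\mathbf{f}}\paren{0 \leftrightarrow D_{n,k}} \geq \frac{c \log n}{\max_{x \in \Lambda_{n,k}} \mu_{\Lambda_{n,k}}^{\mathbf{f}}\paren{0 \leftrightarrow x}}\cdot \mu_{\Lambda_{n,k}}^{\mathbf{f}}\paren{0 \leftrightarrow D_{n,k}}\paren{1 - \mu_{\Lambda_{n,k}}^{\mathbf{f}}\paren{0 \leftrightarrow D_{n,k}}}\,.\]
Feeding this into the DCRT dichotomy, subcritically the maximal two-point function would decay exponentially (by the very definition of $p_c\paren{S_k}$ as the limit of exponential-decay thresholds in increasing boxes, this fails for $p > p_c\paren{S_k}$), while supercritically the integration of the differential inequality produces a linear lower bound $\mu_{\Lambda_{n,k}}^{\mathbf{f}}\paren{0 \leftrightarrow D_{n,k}} \geq \oldconstant{const:10}\paren{p - p_c\paren{S_k}}$ for all $n$ sufficiently large, which is the conclusion of the lemma.

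The main obstacle is the quasi-transitivity step. The original OSSS argument averages over a group acting transitively on sites, but on $S_k$ one only has $\Z^2$-translation invariance in the two unbounded directions. This forces an extra factor depending on $k$ (essentially the number of orbits of the symmetry group on $\Lambda_{n,k},$ which is bounded uniformly in $n$), but it can be absorbed into $\oldconstant{const:10} = \oldconstant{const:10}\paren{k,q,d},$ leaving the linear dependence on $\paren{p - p_c\paren{S_k}}$ intact. A secondary care point is to confirm that the free infinite-volume slab measure $\mu_{S_k}^{\mathbf{f}} = \lim_n \mu_{\Lambda_{n,k}}^{\mathbf{f}}$ has its percolation threshold equal to the $p_c\paren{S_k}$ defined in the paper, but this is a standard consequence of sharpness applied on $S_k$ itself.
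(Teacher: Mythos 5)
Your proposal is correct and takes essentially the same approach as the paper, which cites the Duminil-Copin--Raoufi--Tassion proof directly and says to replace $\Lambda_n$ with $\Lambda_{n,k}$. The additional care you take with quasi-transitivity (the loss of a $k$-dependent constant from the OSSS averaging step) is a genuine and correctly-handled subtlety that the paper elides.
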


The proof follows from the proof of Theorem~\ref{thm:expdecay} (Theorem 1.2 in~\cite{duminil2019sharp}), replacing $\Lambda_n$ with $\Lambda_{n,k}.$ With Lemma~\ref{lemma:slabsharp} in hand, we can give a lower bound for the probability that two vertices are connected within a box contained in the slab $S_k.$

\begin{Lemma}\label{lemma:boxoverlap}
Fix $q \geq 1, d \geq 2, k \geq 1,$ and $p > p_c\paren{S_k}.$  Let $\Lambda = [-3n,3n]\times [-2n,4n] \times [-k,k]^{d-2} \cap \Z^d.$ There is a constant $\newconstant\label{const:11}>0$ not depending on $n$ so that 
\[\mu_{\Lambda}^{\mathbf{f}}\paren{\paren{-n,0,\ldots,0} \leftrightarrow \paren{n,0,\ldots,0}} \geq \oldconstant{const:11}\,.\]
\end{Lemma}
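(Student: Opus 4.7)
The plan is to combine Lemma~\ref{lemma:slabsharp}, the FKG inequality, and a four-side square-root trick to grow two clusters from $(\pm n, 0, \ldots, 0)$ that each reach a common bridge region, and then to join them there.

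First I would apply Lemma~\ref{lemma:slabsharp} (at scale $2n$) to the two translated copies
\[
U_{-} = [-3n,n]\times[-2n,2n]\times[-k,k]^{d-2}, \qquad U_{+} = [-n,3n]\times[-2n,2n]\times[-k,k]^{d-2}
\]
of $\Lambda_{2n,k}$; both sit inside $\Lambda$, and they are centered at $(\mp n, 0, \ldots, 0)$, respectively.  Translation invariance and the lemma give that under $\mu^{\mathbf{f}}_{U_{\pm}}$ the center connects to $\partial U_{\pm}$ with probability at least some $c = c(p) > 0$.  Since each translated $\Lambda_{2n,k}$ is invariant under the reflections $x_1 \mapsto -x_1$ and $x_2 \mapsto -x_2$ about its center, the four lateral sides are exchangeable under $\mu^{\mathbf{f}}_{U_{\pm}}$.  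The usual square-root trick applied via FKG to the decreasing complementary events then shows that the center reaches each specific side --- in particular the top face $\set{x_2 = 2n}$ --- with probability at least $c' := 1 - (1-c)^{1/4}$.

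By the volume-monotonicity inequality $\mu^{\mathbf{f}}_{U_{\pm}} \leqst \mu^{\mathbf{f}}_\Lambda |_{U_{\pm}}$ (which for $q \geq 1$ follows from positive association and the standard boundary-condition arguments used to construct the infinite-volume limit) these estimates transfer to $\mu^{\mathbf{f}}_\Lambda$; a further application of FKG combines them to give probability at least $(c')^2$ that both $(\pm n, 0, \ldots, 0)$ have clusters contained in the respective $U_{\pm}$ reaching the plane $\set{x_2 = 2n}$.  To close the argument I would then produce, with uniformly positive probability, an open path inside the bridge region
\[
R = [-3n,3n]\times[2n,4n]\times[-k,k]^{d-2}\subset \Lambda
\]
joining the portion of $\set{x_2 = 2n}$ with $x_1 \in [-3n,-n]$ to the portion with $x_1 \in [n,3n]$.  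Since $R$ has bounded aspect ratio in its first two coordinates, this can be achieved by chaining a constant number (independent of $n$) of translated $\Lambda_{n,k}$-boxes along $R$, invoking Lemma~\ref{lemma:slabsharp} and the four-side trick within each to obtain a two-sided crossing of the individual box, and gluing consecutive two-sided crossings together via FKG.  A final application of FKG combining the two finger events with the horizontal crossing of $R$ delivers the required constant $\oldconstant{const:11} > 0$; monotonicity of the connection probability in $p$ then promotes the estimate from $p$ slightly above $p_c(S_k)$ (where Lemma~\ref{lemma:slabsharp} is stated) to every $p > p_c(S_k)$.

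The main obstacle is the bridging step.  Without a slab analogue of RSW, the horizontal crossing of $R$ has to be manufactured from Lemma~\ref{lemma:slabsharp} alone, and uniform control requires both that the number of scale-$n$ boxes chained across $R$ be bounded independently of $n$ (which holds thanks to the bounded aspect ratio of $R$) and that the four-side trick be applicable inside each (which relies on the reflection symmetries of the slab).  A secondary, more routine point is justifying the volume-monotonicity inequality $\mu^{\mathbf{f}}_{U_{\pm}} \leqst \mu^{\mathbf{f}}_\Lambda |_{U_{\pm}}$ for the plaquette random-cluster model, which follows the same pattern as in the graph case once positive association and the DLR-type relations for free boundary conditions are in hand.
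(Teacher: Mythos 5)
Your plan shares the starting point of the paper's proof --- Lemma~\ref{lemma:slabsharp} plus symmetry gives positive-probability arm events from a vertex to a face of a slab-box, and those are combined with FKG --- but it is missing the step that makes the argument actually close, and this gap is fatal for $d\geq 3$.

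You treat the combination steps (``gluing consecutive two-sided crossings together via FKG'' and ``a final application of FKG combining the two finger events with the horizontal crossing'') as if the paths produced by the individual arm events are forced to intersect. They are not. FKG guarantees that several increasing crossing events occur simultaneously with positive probability, but it does not make the witnessing open paths share a vertex; and in the slab $S_k = \Z^2 \times \set{-k,\dots,k}^{d-2}$ with $d\geq 3$, two crossings that overlap in the $x_1,x_2$-plane can still avoid each other entirely by using the $d-2$ extra coordinates. There is no Jordan-curve forcing in a thickened two-dimensional region. So ``centre reaches top face of $U_-$,'' ``centre reaches top face of $U_+$,'' and ``horizontal crossing of $R$'' can all occur without $(-n,0,\dots,0)$ and $(n,0,\dots,0)$ being connected. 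You identify ``the bridging step'' as the main obstacle, but you attribute the difficulty to the lack of a slab RSW, whereas the real problem is that you have no mechanism at all for joining two open clusters whose traces merely overlap in projection.

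The paper's proof supplies exactly the missing mechanism. It uses the same crossing events ($V_+,V_-,U_+$ and their rotations), then works with the projections of the witness paths under $\pi_{12}$ to identify a \emph{bounded number} of points where projections coincide but the actual paths may be at graph distance up to $2k(d-2)$ (the slab has finite thickness). At those finitely many near-misses it pays a cost of at most $6k(d-2)$ additional open edges via a local-modification estimate (Theorem~3.45 of Grimmett, stated for monotone measures, applicable because $q\geq 1$). The constant $c_{11}$ then picks up a factor $c_{13}^{6k(d-2)}$, uniformly in $n$. Without some version of this bounded-modification argument --- or an alternative device achieving the same effect --- the FKG-gluing plan as written cannot produce a uniform lower bound on $\mu_{\Lambda}^{\mathbf{f}}\paren{(-n,0,\ldots,0)\leftrightarrow(n,0,\ldots,0)}.$

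A secondary remark: the paper uses exactly three crossing events per ``finger'' and does not need a four-side square-root trick, because the half-plane crossing probability $c_{12}$ from Lemma~\ref{lemma:slabsharp} plus reflection symmetry already equalizes the three relevant directions. Your four-side version is harmless, just slightly heavier than necessary. The volume-monotonicity you invoke ($\mu^{\mathbf{f}}_{U_\pm}\leqst \mu^{\mathbf{f}}_\Lambda|_{U_\pm}$) is indeed available; the paper's Lemma~\ref{lemma:subcomplex} provides it in the relevant form.
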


\begin{proof}
Consider the random-cluster model $\mu_{\Lambda}^{\mathbf{f}}.$ Define three crossing events 
\[V_+ \coloneqq \set{\paren{\vec{0}} \leftrightarrow_{\Lambda_{2n}} \set{\paren{j,2n,x_3,\ldots,x_d} : 0 \leq j \leq 2n, -k \leq x_3,\ldots,x_d \leq k}}\,,\]
\[V_- \coloneqq \set{\paren{\vec{0}} \leftrightarrow_{\Lambda_{2n}} \set{\paren{-j,2n,x_3,\ldots,x_d} : 0 \leq j \leq 2n, -k \leq x_3,\ldots,x_d \leq k}}\,,\]
and
\[U_+ \coloneqq \set{\paren{\vec{0}} \leftrightarrow_{\Lambda_{2n}} \set{\paren{j,-2n,x_3,\ldots,x_d} : 0 \leq j \leq 2n, -k \leq x_3,\ldots,x_d \leq k}}\,.\]
For an event $A(\vec{0})$ defined in relation to the origin $\vec{0},$ let $A(\vec{x})$ denote the event $A+\vec{x}$ obtained by translating the origin to $\vec{x}.$ By Lemma~\ref{lemma:slabsharp} and symmetry, there is a $\newconstant\label{const:12}>0$ not depending on $n$ so that 
\begin{equation}
\label{eq:boxoverlap1}
\mu_{\Lambda}^{\mathbf{f}}\paren{V_+} = \mu_{\Lambda}^{\mathbf{f}}\paren{V_-} = \mu_{\Lambda}^{\mathbf{f}}\paren{U_+}=\oldconstant{const:12}\,.
\end{equation}

Let $\vec{v} = \paren{-n,0,\ldots,0}$ and $\vec{w} = \paren{n,0,\ldots,0}.$ Denote the projection onto the first two coordinates by $\pi_{12}: \Z^d \to \Z^2.$ Our aim will be to create overlapping paths (i.e. paths with intersecting images under $\pi_{12}$) containing $v$ and $w,$ which are then close enough to be connected with positive probability. 

First, we will construct two paths that project to a horizontal crossing of of  $\brac{-2n,4n}^2$ under $\pi_{12}.$  Fix an arbitrary ordering of the finite paths in $\Z^d.$ Assuming that  $V_+\paren{\vec{v}}$ occurs, let $\hat{\vec{v}}$ be the endpoint of the minimal path witnessing that event and let  $\vec{v}'=\pi_{12}\paren{\hat{\vec{v}}}\times \paren {0,\ldots,0}.$ Then, the event 
\begin{equation}
\label{eq:boxoverlap2}
H= U_+\paren{\vec{v}}\cap V_+\paren{\vec{v}}\cap V_{-}\paren{\vec{v}'}\,,
\end{equation}
ensures the existence of a path through $\vec{v}$ with a single gap at $\vec{v}'$ which projects to a vertical crossing, as illustrated in in Figure~\ref{fig:boxoverlap}.

Next, let $I$ be the event $H$ rotated $\pi/2$ radians counterclockwise in the first two coordinates about the point $\paren{0,n,0,\ldots,0}.$ By symmetry, $\mu_{\Lambda}^{\mathbf{f}}\paren{I}=\mu_{\Lambda}^{\mathbf{f}}\paren{H},$ and that event ensures the existence of a path through $\vec{w}$ with a single gap which projects to a horizontal crossing of $\brac{-2n,4n}^2.$

\begin{figure}
    \centering
    \includegraphics[width=.5\textwidth]{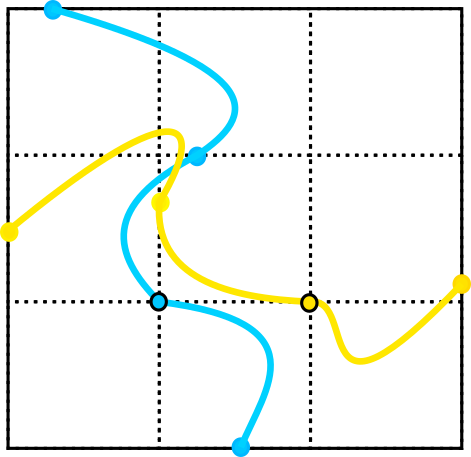}
    \caption{The two paths constructed in Lemma~\ref{lemma:boxoverlap}. The blue path is a projection of a witness for the event $H$ and the yellow path is a projection of a witness for $I.$ The circled blue dot is $\vec{v}$ and the circled orange dot is $\vec{w}.$ Both paths are shown in $\Lambda$ with a grid spacing of $2n$ for clarity.}
    \label{fig:boxoverlap}
\end{figure}

Now, assume that $H\cap I$ occurs so $\pi_{12}\paren{\vec{v}}$ and $\pi_{12}\paren{\vec{w}}$ are connected in $\pi_{12}\paren{P}.$ We count how many edges we would need to add in order to connect the overlapping segments in $P$, thus connecting $\vec{v}$ to $\vec{w}.$ Any two points in $S_k$ with the same image under $\pi_{12}$ are at graph distance at most $2k\paren{d-2}.$ There is one point of overlap between $\hat{\vec{v}}$ and $\vec{v}',$ and one more from the symmetric event for $\vec{w}.$ In addition, we have a third point of overlap between the horizontal and vertical crossings. As such, we can connect $\vec{v}$ to $\vec{w}$ using at most $6k\paren{d-2}$ additional open edges.

Let $p > r > p_c\paren{S_k}.$ Given a configuration $\omega,$ define 
\[S^m(\omega) = \set{\omega': \sum_{e \in E\paren{G}} \abs{\omega\paren{e} - \omega'\paren{e}} \leq m}\,.\]
For an event $E,$ we then define
\[E^m(E) = \set{\omega: S^m(\omega) \cap E \neq \emptyset}\,.\]Then by Theorem 3.45 of~\cite{grimmett2004random} and the FKG inequality there is a constant $\newconstant\label{const:13}$ so that
\begin{align*}
    &\mu_{\Lambda}^{\mathbf{f}}\paren{\vec{v} \leftrightarrow \vec{w}}\\
    &\qquad\geq \oldconstant{const:13}^{6k\paren{d-2}}\mu_{\Lambda}^{\mathbf{f}}\paren{E^{\paren{6k\paren{d-2}}}\paren{\vec{v} \leftrightarrow \vec{w}}}\\
    &\qquad\geq \oldconstant{const:13}^{6k\paren{d-2}}\mu_{\Lambda}^{\mathbf{f}}\paren{H \cap I}\\
    &\qquad\geq \oldconstant{const:12}^{6k\paren{d-2}} \oldconstant{const:13}^{6k\paren{d-2}}\,,
\end{align*}
where we used Equations~\ref{eq:boxoverlap1} and~\ref{eq:boxoverlap2} in the final step. Since this bound does not depend on $n$ we are done.

\end{proof}


\begin{proof}[Proof of Theorem~\ref{thm:one} (supercritical regime)]
Let $p > p' > p_c^{\mathrm{slab}}.$ Then there is a $k$ such that $p' > p_c\paren{S_k}.$ Let $N \geq 2k.$ We will construct a giant cycle in percolation with parameter $p'$ by using Lemma~\ref{lemma:boxoverlap} to connect the centers of four pairwise overlapping boxes in the torus, each of diameter $\floor{N/2}.$ If $N$ is not divisible by $2,$ the starting and ending points of this constructed path may not exactly match. However, they will be at graph distance at most $1,$ and are therefore connected with probability at least $\frac{p'}{q}.$ We may therefore assume that $N$ is divisible by $2$ in the remainder of the proof for simplicity. We apply Lemma~\ref{lemma:boxoverlap} to copies of 
\[\Lambda \coloneqq [-3N/8,3N/8]\times [-N/4,2N/4] \times [-k,k]^{d-2} \cap \Z^d\]
that are centered at $u_0 = \paren{N/8,N/8,\ldots,0},$ $u_1 = \paren{3N/8,N/8,\ldots,0},$ $u_2 = \paren{5N/8,N/8,\ldots,0},$ and $u_3 = \paren{7N/8,N/8,\ldots,0},$ to connect $v_0 = \vec{0},$ $v_1 = \paren{N/4,0,\ldots,0},$ $v_2 = \paren{N/2,0,\ldots,0},$ and $v_3 = \paren{3N/4,0,\ldots,0}.$ For convenience, let  $B(u,v,w)$ denote the event $u\leftrightarrow_{\lambda(w)} .$

If the events $B(v_0,v_1,u_0),$ $B(v_1,v_2,u_1),$ $B(v_2,v_3,u_2),$ and $B(v_3,v_0,u_3)$  all occur, then there is an open path that is homotopic to the standard generator of $H_1\paren{\T^d_N}$ contained in $\set{x_2=x_3=\ldots=x_d = 0}.$ Thus, 

\[H \supseteq B(v_0,v_1,u_0) \cap B(v_1,v_2,u_1)\cap B(v_2,v_3,u_2) \cap B(v_3,v_0,u_3)\,.\]

Let $\mu_{u_0}$ denote the measure $\mu_{\Lambda\paren{u_0},p'}^{\mathbf{f}}.$ We then apply the FKG inequality and Lemma~\ref{lemma:subcomplex} to bound
\begin{align*}
    \mu_{\T^d_N,p'}\paren{H} &\geq \mu_{\T^d_N,p'}\paren{B(v_0,v_1,u_0) \cap B(v_1,v_2,u_1)\cap B(v_2,v_3,u_2) \cap B(v_3,v_0,u_3)}\\
    &\geq     
    \mu_{u_0}\paren{v_0 \leftrightarrow v_1}\mu_{u_1}\paren{v_1 \leftrightarrow v_2}\mu_{u_2}\paren{v_2 \leftrightarrow v_3}\mu_{u_3}\paren{v_3 \leftrightarrow v_4}\\
    &\geq \oldconstant{const:11}^4\,.
\end{align*}
Note that the final bound is uniform in $N.$ Again using the FKG inequality, we obtain
\[\mu_{\T^d_N,p'}\paren{S} \geq \mu_{\T^d_N,p'}\paren{H}^{d} \geq \geq \oldconstant{const:11}^{4d}\,.\]
Then by applying Theorem~\ref{thm:grahamgrimmett} as in Proposition~\ref{prop:sharp}, we have
\[\mu_{\T^d_N,p}\paren{S} \to 1\]
as $N \to \infty.$

\end{proof}

\subsection{Proof of Theorem~\ref{thm:weak}} We conclude the paper by finishing our proof of our theorem which establishes the existence of threshold functions for homological percolation in general, and enumerates some of their properties. All that remains is to prove the duality and monotonicity statements. The former is topological, and does not require modification from the proof of the analogous statement in~\cite{duncan2020homological}.

\begin{Proposition}\label{prop:duality}
For any $d \geq 2$ and $1 \leq i \leq d-1,$
\[p_u\paren{q,i,d} = \paren{p_l\paren{q,d-i,d}}^*\,.\]
\end{Proposition}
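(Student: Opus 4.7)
The plan combines distributional duality (Theorem~\ref{theorem:duality}) with topological duality on the torus to relate the $i$-dimensional and $(d-i)$-dimensional plaquette random-cluster models, and then passes through the sharp threshold machinery of Proposition~\ref{prop:sharp}.

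\textbf{Step 1 (Duality identity for balanced measures).} I would first establish that, for the balanced PRCM,
\begin{align*}
    \tilde\mu_{\T^d_N,\,p,\,q,\,i}(A) \;=\; 1 - \tilde\mu_{\T^d_N,\,p^*,\,q,\,d-i}(S)
\end{align*}
holds for every $N, p, q, i, d$. The topological ingredient pairs top-dimensional giant cycles via Alexander duality on the torus: $b_i(P) + b_{d-i}(P^\bullet) = \binom{d}{i}$. Hence $A^c = \{b_i(P) = 0\}$ coincides with the event that $b_{d-i}(P^\bullet)$ attains its maximum $\binom{d}{d-i}$, which is exactly the event $S$ for $P^\bullet$ viewed as a $(d-i)$-dimensional complex. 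The distributional ingredient is Theorem~\ref{theorem:duality}: under $\tilde\mu_{\T^d_N,\,p,\,q,\,i}$, the dual $P^\bullet$ is distributed as a balanced $(d-i)$-dimensional PRCM at parameter $p^*$. Combining the two gives the identity.

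\textbf{Step 2 ($\lambda$ vs.\ $\tilde\lambda$ and $A$ vs.\ $S$).} Define $\tilde\lambda_A$ and $\tilde\lambda_S$ as the balanced analogues of $\lambda$, using the events $A$ and $S$ respectively. I would show
\begin{align*}
    \limsup_N \tilde\lambda_A(N,q,i,d) = p_u(q,i,d), \qquad
    \liminf_N \tilde\lambda_S(N,q,d-i,d) = p_l(q,d-i,d).
\end{align*}
Both equalities rest on the fact that $\mu$ and $\tilde\mu$ are mutually absolutely continuous with Radon--Nikodym derivatives bounded uniformly in $N$ by functions of $q$, so convergence to $0$ or $1$ is preserved in both directions. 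By Corollary~\ref{cor:sharp}, $p > p_u(q,i,d)$ forces $\mu_p(S) \to 1$, hence $\tilde\mu_p(S) \to 1$, and a fortiori $\tilde\mu_p(A) \to 1$ since $S \subseteq A$. Symmetrically, $p < p_l(q,i,d)$ forces $\tilde\mu_p(A) \to 0$. These two bounds pin both $\limsup \tilde\lambda_A$ and $\limsup \tilde\lambda_S$ to $p_u$, and the corresponding $\liminf$s to $p_l$. Applying this to the $(d-i,d)$-model yields the second equality.

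\textbf{Step 3 (Taking $\limsup$).} Step 1 gives the pointwise identity $\tilde\lambda_A(N,q,i,d) = \bigl(\tilde\lambda_S(N,q,d-i,d)\bigr)^*$. Since $p \mapsto p^*$ is continuous and strictly decreasing, $\limsup_N (x_N)^* = (\liminf_N x_N)^*$. Combining with Step 2,
\begin{align*}
    p_u(q,i,d) \;=\; \limsup_N \tilde\lambda_A(N,q,i,d) \;=\; \bigl(\liminf_N \tilde\lambda_S(N,q,d-i,d)\bigr)^* \;=\; \bigl(p_l(q,d-i,d)\bigr)^*.
\end{align*}

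The main obstacle is Step 1, specifically the topological bookkeeping: giant $i$-cycles of the primal $P$ must be paired with giant $(d-i)$-cycles (not $i$-cycles) of the dual $P^\bullet$. This is the same dimensional shift that appears in Theorem~\ref{theorem:duality} and is essentially Alexander duality on $\T^d_N$, already implicit in the proofs of Theorems~\ref{thm:half} and~\ref{thm:alexander}. Once the duality identity of Step~1 is in place, Steps~2 and~3 are routine $\limsup/\liminf$ bookkeeping combined with sharp threshold results already established.
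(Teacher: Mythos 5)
Your overall strategy — route the duality through Theorem~\ref{theorem:duality}, use the Alexander duality pairing $b_i(P) + b_{d-i}(P^\bullet) = \binom{d}{i}$ to identify $A^c$ for $P$ with $S$ for $P^\bullet$, then pass to the threshold functions — is the right one and matches the paper's intended argument (which cites exactly Theorem~\ref{theorem:duality}, Equation~\ref{eq:2}, and Corollary~\ref{cor:sharp}). Step~1 is correct, and Step~3 is fine once Step~2 is secured.

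However, Step~2 has a genuine gap. You claim that the two limits you establish ``pin'' $\limsup_N \tilde\lambda_A = p_u$ and $\liminf_N \tilde\lambda_S = p_l$. What your argument actually gives is only one-sided: from $p > p_u \Rightarrow \tilde\mu_p(A), \tilde\mu_p(S) \to 1$ you get $\limsup_N \tilde\lambda_A, \limsup_N \tilde\lambda_S \leq p_u$, and from $p < p_l \Rightarrow \tilde\mu_p(A), \tilde\mu_p(S) \to 0$ you get $\liminf_N \tilde\lambda_A, \liminf_N \tilde\lambda_S \geq p_l$. Nothing you wrote rules out, say, $\limsup_N \tilde\lambda_A < p_u$; the bounded Radon--Nikodym derivative comparison between $\tilde\mu$ and $\mu$ tells you $\mu_p(A) \in [1/(2r), 1 - 1/(2r)]$ at $p = \tilde\lambda_A$, but this does not force $\mu_p(A) \leq 1/2$, hence does not force $\tilde\lambda_A \geq \lambda$. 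To close the gap you need one of two additional inputs. One option: observe that $A$ is increasing while $(\sqrt q)^{-b_i}$ is decreasing in $\omega$, so by positive association of $\mu$ they are negatively correlated, giving $\tilde\mu_p(A) \leq \mu_p(A)$ and hence $\tilde\lambda_A(N) \geq \lambda(N)$, so $\limsup_N \tilde\lambda_A \geq p_u$; a parallel computation (on $S$, where $(\sqrt q)^{-b_i}$ is constant) together with the sharpness Proposition~\ref{prop:sharp} (which forces $\tilde\lambda_S - \lambda \to 0$) gives $\liminf_N \tilde\lambda_S = p_l$. The second option, which is cleaner and closer to what the paper's one-line proof points at, is to avoid introducing $\tilde\lambda_A, \tilde\lambda_S$ at all: for $p > p_u(q,i,d)$, run the chain $\mu_p(S) \to 1 \Rightarrow \tilde\mu_p(A) \to 1 \Rightarrow \tilde\mu_{p^*,d-i}(S) \to 0 \Rightarrow \mu_{p^*,d-i}(S) \to 0 \Rightarrow \mu_{p^*,d-i}(A) \to 0$ (using Lemma~\ref{lemma:spinning} for the last implication), so $\lambda(N,d-i) > p^*$ eventually, giving $p^* \leq p_l(q,d-i,d)$; and for $p < p_l(q,i,d)$, run the reverse chain to get $p^* \geq p_u(q,d-i,d)$. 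Together these two implications, plus the continuity and involutivity of $p \mapsto p^*$, give the equality directly. A secondary issue in your Step~2/3: you treat $\tilde\lambda_A(N)$ and $\tilde\lambda_S(N)$ as well-defined single numbers, which implicitly uses monotonicity of $\tilde\mu_p$ in $p$; but $\tilde\mu$ includes the non-monotone weight $(\sqrt q)^{-b_i}$ and it is not established that it satisfies the FKG lattice condition, so you would need to specify a convention (e.g.\ maximal solution) and check compatibility with the pointwise identity.
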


\begin{proof}
This follows from Theorem ~\ref{theorem:duality}, Equation~\ref{eq:2}, and Corollary~\ref{cor:sharp} in a similar manner to Proposition 18 of~\cite{duncan2020homological}.
\end{proof}

We now turn to showing monotonicity in the critical probabilities. First we require a lemma comparing the $i$-dimensional plaquette random-cluster percolation on a cubical complex to the random-cluster model on a subcomplex.

\begin{Lemma}\label{lemma:subcomplex}
Fix $q \geq 1.$ Let $X,Y$ be finite $i$-dimensional cubical complexes with $X \subset Y$ and recall that $\mu_X$ and $\mu_Y$ are the $i$-dimensional plaquette random-cluster measures with parameters $p,q$ on $X$ and $Y$ respectively. Then $\mu_Y|_X$  stochastically dominates $\mu_X.$
\end{Lemma}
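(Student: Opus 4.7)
The plan is to apply Holley's theorem (Theorem~\ref{thm:stochdom}) by comparing the single-plaquette conditional probabilities of $\mu_X$ and $\mu_Y|_X.$ The basic structure mirrors the classical comparison inequality for the random-cluster model, with the key subtlety being topological rather than combinatorial: whereas the classical proof hinges on the fact that adding an edge can merge at most one pair of components, here we need the analogous observation that adding an $i$-plaquette $\sigma$ to a complex $P$ either kills a generator of $H_{i-1}\paren{P}$ (decreasing $\mathbf{b}_{i-1}$ by one) or adds a generator of $H_i\paren{P}$ (leaving $\mathbf{b}_{i-1}$ unchanged), according to whether $\brac{\partial \sigma}$ is nonzero or zero in $H_{i-1}\paren{P}.$ Treating the two cases separately, this tells us that the conditional probability under $\mu$ that $\sigma$ is open equals $p$ in the second case and $\frac{p}{p+q\paren{1-p}} \leq p$ in the first, whenever $q \geq 1.$

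First I would restrict to $p \in \paren{0,1}$ (the endpoints are trivial) so that both measures are strictly positive on $\set{0,1}^{X^i}.$ Then for any $\sigma \in X^i$ and any pair of configurations $\xi \leq \zeta$ on $X^i \setminus \set{\sigma},$ I need to verify
\[\mu_X\paren{\omega\paren{\sigma} = 1 \mid \xi} \leq \mu_Y|_X\paren{\omega\paren{\sigma} = 1 \mid \zeta}.\]
Writing the right-hand side as an average of $\mu_Y\paren{\omega\paren{\sigma} = 1 \mid \zeta \cup \eta}$ over configurations $\eta \in \set{0,1}^{Y^i \setminus X^i}$ weighted by the conditional $\mu_Y$-law of $\eta$ given $\zeta,$ it suffices to show that each of these conditional probabilities dominates $\mu_X\paren{\omega\paren{\sigma} = 1 \mid \xi}.$

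The heart of the proof is the following monotonicity claim: if $\brac{\partial \sigma} = 0$ in $H_{i-1}\paren{P\paren{\xi};\, \F},$ then $\brac{\partial \sigma} = 0$ in $H_{i-1}\paren{P\paren{\zeta \cup \eta};\, \F}$ for every $\eta.$ This is immediate because $P\paren{\xi} \subseteq P\paren{\zeta \cup \eta}$ (using $\xi \leq \zeta$ and the fact that both complexes contain the full $\paren{i-1}$-skeleton of $Y$), so any chain $\tau \in C_i\paren{P\paren{\xi};\, \F}$ with $\partial \tau = \partial \sigma$ witnesses the same boundary relation in the larger complex. Combining this with the explicit formula for the conditional probabilities in the previous paragraph yields the four-case analysis: if $\mu_X$'s conditional probability is $p,$ so is $\mu_Y$'s; otherwise $\mu_X$'s conditional probability is $\frac{p}{p+q\paren{1-p}}$ and $\mu_Y$'s is at least this value. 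In either case the required inequality holds, so Holley's theorem applies and gives $\mu_X \leqst \mu_Y|_X.$

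The main point requiring care is the passage from $\mu_Y$ to its marginal $\mu_Y|_X$: one might worry that conditioning on $\zeta$ in the marginal could scramble the neat conditional-probability formula. The argument above avoids this by verifying the Holley inequality pointwise in $\eta$ before averaging, so the marginal appears only in the final step and the essential inequality is the deterministic topological one between subcomplexes.
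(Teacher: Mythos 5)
Your proof is correct and follows essentially the same route as the paper: both verify Holley's condition (Theorem~\ref{thm:stochdom}) by observing that adding an $i$-plaquette $\sigma$ lowers $\mathbf{b}_{i-1}$ exactly when $\brac{\partial\sigma}\neq 0$, and that this event can only become harder in a larger complex. Your explicit averaging over $\eta\in\set{0,1}^{Y^i\setminus X^i}$ is a clean way of handling the passage from $\mu_Y$ to the marginal $\mu_Y|_X$, which the paper compresses into the phrase ``by positive association''; one small slip is that $P(\xi)$ contains the $(i-1)$-skeleton of $X$, not of $Y$, though the needed inclusion $P(\xi)\subseteq P(\zeta\cup\eta)$ still holds since the skeleton of $X$ sits inside that of $Y$.
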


\begin{proof}
Let $\sigma \subset X$ be an $i$-cell. Let $S$ be a subcomplex of $Y,$ let $Y_\omega$ and let $R = S \cap X.$ Write $S^{\sigma} = S \cup \sigma$ and $S_{\sigma} = S \setminus \sigma$ and similarly $R^{\sigma} = R \cup \sigma$ and $R_{\sigma} = R \setminus \sigma.$ Since $Y$ has no $(i+1)$-cells,
\[\mathbf{b}_{i}\paren{R^{\sigma}} - \mathbf{b}_{i}\paren{R_{\sigma}} \leq \mathbf{b}_{i}\paren{S^{\sigma}} - \mathbf{b}_{i}\paren{S_{\sigma}}\,.\]
Then by the Euler--Poincar\'{e} formula we see that
\begin{align}\label{eq:subcomplexbetti}
\mathbf{b}_{i-1}\paren{R_{\sigma}} - \mathbf{b}_{i-1}\paren{R^{\sigma}} \geq \mathbf{b}_{i-1}\paren{S_{\sigma}} - \mathbf{b}_{i-1}\paren{S^{\sigma}}\,.
\end{align}
Recall that for any $i$-cell $\sigma \subset Z,$ the random subcomplex $T$ with distribution $\mu_Z$ satisfies
\begin{align}\label{eq:openprob}
    \mu_{Z}\paren{\sigma \in T \mid \mathbf{b}_{i-1}\paren{T_{\sigma}} - \mathbf{b}_{i-1}\paren{T^{\sigma}}} = \begin{cases}
  p & \mathbf{b}_{i-1}\paren{T_{\sigma}} - \mathbf{b}_{i-1}\paren{T^{\sigma}} = 0\\
  \frac{p/q}{1-p+p/q} &\mathbf{b}_{i-1}\paren{T_{\sigma}} - \mathbf{b}_{i-1}\paren{T^{\sigma}} = 1
\end{cases}
\end{align}

Then by positive association and Equations~\ref{eq:subcomplexbetti} and ~\ref{eq:openprob}, we may apply Theorem~\ref{thm:stochdom} to conlcude that $\mu_X$ stochastically dominates $\mu_Y|_X.$
\end{proof}

Finally, we compare percolation within subcomplexes of the torus in order to obtain the desired inequalities between critical probabilities.
\begin{Proposition}
For all $1 \leq i \leq d-1,$
\[p_u\paren{q,i,d} < p_u\paren{q,i,d-1}< p_u\paren{q,i+1,d}\,.\]
\end{Proposition}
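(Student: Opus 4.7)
Proof Proposal.

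First, I would use Proposition~\ref{prop:duality} to reduce the second inequality to one of the same shape as the first. Since $p \mapsto p^*$ is strictly decreasing on $(0, 1)$, the inequality $p_u\paren{q, i, d-1} < p_u\paren{q, i+1, d}$ is equivalent to $p_l\paren{q, j, d} < p_l\paren{q, j, d-1}$ with $j = d - i - 1$, and $j$ ranges over $0 < j < d-1$ as $i$ does. Both assertions in the proposition therefore follow from
\[p_u\paren{q, i, d} < p_u\paren{q, i, d-1} \quad \text{and} \quad p_l\paren{q, i, d} < p_l\paren{q, i, d-1}\]
for every $0 < i < d-1$.

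For the weak versions, my plan is to embed $\T^{d-1}_N$ as the codimension-one subcomplex $X = \set{x_d = 0}$ of $\T^d_N$. Writing $\T^d_N \cong \T^{d-1}_N \times S^1_N$, the K\"unneth formula (see the end of Section~\ref{sec:hom}) gives $H_i\paren{\T^d_N;\,\F} \cong H_i\paren{\T^{d-1}_N;\,\F} \oplus H_{i-1}\paren{\T^{d-1}_N;\,\F}$, and the inclusion $X \hookrightarrow \T^d_N$ induces an injection onto the first summand; hence every giant $i$-cycle of $X$ (viewed as a subcomplex of $\T^{d-1}_N$) remains giant in $\T^d_N$. Lemma~\ref{lemma:subcomplex} gives that the restriction $\mu_{\T^d_N, p, q, i}|_X$ stochastically dominates $\mu_{\T^{d-1}_N, p, q, i}$, and since the event ``$X$ contains a giant $i$-cycle of $\T^{d-1}_N$'' is increasing in the plaquettes of $X$, this yields
\[\mu_{\T^d_N, p}\paren{A} \geq \mu_{\T^{d-1}_N, p}\paren{A}\,.\]
Combined with Corollary~\ref{cor:sharp} and passing to appropriate subsequential limits of $\lambda\paren{N, q, i, d-1}$, this gives $p_l\paren{q, i, d} \leq p_l\paren{q, i, d-1}$ and $p_u\paren{q, i, d} \leq p_u\paren{q, i, d-1}$.

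The main obstacle will be upgrading these to strict inequalities. The embedding uses only a single slice of $\T^d_N$, but the torus contains roughly $N$ disjoint copies of $\T^{d-1}_N$ together with many additional $i$-plaquettes linking them, all of which should strictly facilitate giant cycle formation. One avenue is an enhancement argument in the style of Aizenman--Grimmett, adapted to the plaquette random-cluster model via the finite-energy property in Equation~\ref{eq:openprob}: the conditional probability that any fixed plaquette outside $X$ is open is bounded uniformly away from zero, and exploiting these extra open plaquettes should push the threshold uniformly below $p_u\paren{q, i, d-1}$. A complementary approach is to combine the Graham--Grimmett sharp threshold theorem (Theorem~\ref{thm:grahamgrimmett}) with a quantitative refinement of the embedding inequality that reflects the $\Theta\paren{N}$-factor larger number of $i$-plaquettes in $\T^d_N$ compared to $\T^{d-1}_N$.
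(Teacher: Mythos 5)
Your reduction of the second inequality to a $p_l$-version of the first via Proposition~\ref{prop:duality} is a reasonable step, and your weak comparison is correct: embedding $\T^{d-1}_N$ as a slice of $\T^d_N$, combining Lemma~\ref{lemma:subcomplex} with the fact that giant $i$-cycles of the slice remain giant in $\T^d_N$, does yield $\mu_{\T^d_N,p}\paren{A}\geq\mu_{\T^{d-1}_N,p}\paren{A}$ and hence the weak inequalities. But the crux of the proposition is the \emph{strict} inequality, and at that point your proposal stops being a proof. Neither suggested route is carried out: an Aizenman--Grimmett enhancement argument for a model with the long-range dependence of the random-cluster measure is genuinely delicate (the classical machinery is built for product measures), and the sharp-threshold refinement is not a mechanism that manufactures a strict gap between two distinct critical points on its own. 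So there is a real gap here, not a routine detail to fill in.

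The paper closes this gap with a concrete construction rather than an abstract enhancement principle. It introduces an intermediate \emph{thickened} slice $\mathcal{T}_1=\T^d_N\cap\set{x_1\in\brac{0,1}}$ sitting between $\mathcal{T}_0=\T^d_N$ and the thin slice $\mathcal{T}_2\cong\T^{d-1}_N$, and builds a chain of comparisons $\mu_{\mathcal{T}_0}\geq_{\mathrm{st}}\mu_{\mathcal{T}_1}\geq_{\mathrm{st}}\mu_{\mathcal{T}_1}'\geq_G\mu_{\mathcal{T}_1}''\geq_{\mathrm{st}}\mu_{\mathcal{T}_2,p'}$, where $\mu_{\mathcal{T}_1}'$ does Bernoulli$(p/q)$ percolation on the plaquettes of $\mathcal{T}_1\setminus\mathcal{T}_2$ and a free random-cluster model on $\mathcal{T}_2$, and where $\mu_{\mathcal{T}_1}''$ is obtained by a Bernoulli splitting of the perpendicular plaquettes: a plaquette $v\in\mathcal{T}_2$ is ``promoted'' to open when $v+\vec{e}_1$ and the whole ring $J\paren{v}$ of perpendicular plaquettes of the $(i+1)$-cell $w\paren{v}$ are open. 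Since such a promoted plaquette is a boundary of open plaquettes, promotion cannot destroy giant cycles ($\geq_G$), and the promoted measure stochastically dominates (Holley, Theorem~\ref{thm:stochdom}, with the conditional probabilities from Equation~\ref{eq:openprob}) the random-cluster model on $\mathcal{T}_2$ with the strictly larger parameter $p'=p+\paren{1-p}\frac{p}{q}p_S^{2i}>p$. That explicit $p'>p$ is exactly what buys strictness: $p<p_u\paren{q,i,d-1}<p'$ forces $p_u\paren{q,i,d}\leq p$. So while your instinct that the extra transverse plaquettes must help is correct, the paper realizes it through a hands-on promotion/splitting argument tailored to $(i+1)$-cells, not via enhancement technology, and your proposal would need that (or an equivalent quantitative comparison) spelled out before it becomes a proof.
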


\begin{proof}
The topological properties of any given configuration of plaquettes are identical to those discussed Proposition 24 of~\cite{duncan2020homological}. We will therefore only modify the probabilistic arguments as necessary.

Our first goal is to show
\[p_u\paren{q,i,d} < p_u\paren{q,i,d-1}.\]
Our strategy will be to define a sequence of models between the random-cluster model on $\T^d_N$ and $\T^{d-1}_N$ in which the giant cycle space of each model stochastically dominates the giant cycle space of the one before. More precisely, for a configuration of plaquettes $\omega,$ let $G\paren{\omega}$ be the associated subspace of giant cycles of $P\paren{\omega}$ in $H_i\paren{\T^d_N}.$ Then we say $\mu_1 \leq_G \mu_2$ if there is a coupling $\kappa$ of $\mu_1$ and $\mu_2$ so that \[\kappa\paren{\set{\paren{\omega_1,\omega_2} : G\paren{\omega_1} \subseteq G\paren{\omega_2}}} = 1\,.\]
Note that $\mu_1 \leqst \mu_2$ implies $\mu_1 \leq_G \mu_2.$

Let $\mathcal{T}_0 = \T^d_N,$ $\mathcal{T}_1 = \T^d_N \cap \set{x_1 \in [0,1]},$ and $\mathcal{T}_2 = \T^d_N \cap \set{x_1 = 0}.$ For $j=0,1,2,$ let $\mu_{\mathcal{T}_j}$ be so that $\mu_{\mathcal{T}_j}|_{\mathcal{T}_j}$ is the random-cluster model on $\mathcal{T}_j$ and $\mu_{\mathcal{T}_j}|_{\T^d_N \setminus \mathcal{T}_j}$ sets all plaquettes to be closed almost surely. By Lemma~\ref{lemma:subcomplex}, we have 
\[\mu_{\mathcal{T}_0} \geq_{\mathrm{st}} \mu_{\mathcal{T}_1}\,.\]
We now put a different measure $\mu_{\mathcal{T}_1}'$ on configurations that are closed outside $\mathcal{T}_1.$ Let $F_2$ be the set of $i$-cells of $\mathcal{T}_1$ contained in $\mathcal{T}_2$ and let $F_1$ be the rest of the $i$-cells of $\mathcal{T}_1.$ Let $\eta_1$ and $\eta_2$ be the number of open cells of $F_1$ and $F_2$ respectively. We set $\mu_{\mathcal{T}_1}'|_{\mathcal{T}_2} = \mu_{\mathcal{T}_2}|_{\mathcal{T}_2}.$ We then let $\mu_{\mathcal{T}_1}'|_{\mathcal{T}_1 \setminus \mathcal{T}_2}$ be independent Bernoulli plaquette percolation with probability $p/q$ and declare all other plaquettes closed. More explicitly,
\begin{align*}
    \mu_{\mathcal{T}_1}'\paren{\omega} \coloneqq \frac{1}{Z}p^{\eta_2\paren{\omega}}\paren{1-p}^{\abs{F_2} - \eta_2\paren{\omega}}q^{\mathbf{b}_{i-1}\paren{P_{\omega|_{\mathcal{T}_2}}}}\paren{\frac{p}{q}}^{\eta_1}\paren{1-\frac{p}{q}}^{\abs{F_1} - \eta_1}\,.
\end{align*}
We think of this as doing Bernoulli percolation on $F_1$ with parameter $p/q$ and then a random-cluster percolation with free boundary conditions on $F_2.$ For $\sigma \in F_1$ and a configuration $\xi$ on $\mathcal{T}_1,$ we clearly have 
\begin{align*}
&\mu_{\mathcal{T}_1}'\paren{\omega\paren{\sigma} = 1\mid \omega\paren{\tau} = \xi\paren{\tau} \text{ for all } \tau \in \mathcal{T}_1 \setminus \sigma} \\
&\qquad \leq \mu_{\mathcal{T}_1}\paren{\omega\paren{\sigma} = 1\mid \omega\paren{\tau} = \xi\paren{\tau} \text{ for all } \tau \in \mathcal{T}_1 \setminus \sigma}\,.
\end{align*}
By Lemma~\ref{lemma:subcomplex}, for $\sigma \in F_2$ we also have
\begin{align*}
&\mu_{\mathcal{T}_1}'\paren{\omega\paren{\sigma} = 1\mid \omega\paren{\tau} = \xi\paren{\tau} \text{ for all } \tau \in \mathcal{T}_1 \setminus \sigma} \\
&\qquad \leq \mu_{\mathcal{T}_1}\paren{\omega\paren{\sigma} = 1\mid \omega\paren{\tau} = \xi\paren{\tau} \text{ for all } \tau \in \mathcal{T}_1 \setminus \sigma}\,.
\end{align*}
The again applying Theorem~\ref{thm:stochdom}, we have \[\mu_{\mathcal{T}_1} \geq_{\mathrm{st}} \mu_{\mathcal{T}_1}'\,.\]

We now perform a splitting of the state of a plaquette into several Bernoulli variables similar to one found in Proposition 24 of~\cite{duncan2020homological}. We adapt some of the definitions used there. Let $S$ be the set of $i$-faces of $\mathcal{T}_1$ that intersect, but are not contained in $\mathcal{T}_2.$ For an $i$-face $v$ of $\mathcal{T}_2,$ let $J(v)$ be the set of all perpendicular $i$-faces that meet $v$ at an $(i-1)$ face. Then $v,$ $v+\vec{e}_1,$ and $J(v)$ are the $i$-faces of an $(i+1)$-face $w(v).$ Also, for a perpendicular $i$-face $u$ of $S,$ let $K(u)=\set{v:u\in J(v)}.$ Let $p_S$ satisfy 
\[ \frac{p}{q} = 1- \paren{1-p_S}^{2\paren{d-i}}\,.\]
For all pairs $\paren{v,u}$ where $v \in \mathcal{T}_2$ and $u\in J(v),$ let $\kappa\paren{v,u}$ be independent $\mathrm{Ber}\paren{p_S}$ random variables. Then by construction, Bernoulli percolation with parameter $p/q$ on $S$ is equivalent to setting each cell $v \in S$ to be open if and only if $\sum_{u \in J(v)} \kappa\paren{v,u} > 0.$ Given Bernoulli $p/q$ percolation on $F_1,$ let 
\[L = \set{v \in \mathcal{T}_2 : \kappa\paren{v,u} = 1 \text{ for each } u \in J\paren{v} \text{ and $v+\vec{e}_1$ is open}}\,.\]

Let the $\mu_{\mathcal{T}_1}''$ have as open plaquettes the union of the open plaquettes $\mu_{\mathcal{T}_1}' |_{\mathcal{T}_2}$ and the plaquettes in $L$(so all plaquettes of $F_1$ are all closed). Since the boundary of a plaquette in $L$ is necessarily the boundary of an open set of plaquettes, 
\[\mu_{\mathcal{T}_1}' \geq_{G} \mu_{\mathcal{T}_1}''.\]
Now let $p' = p+(1-p)\frac{p}{q}p_S^{2i}$ (i.e. the probability that a plaquette of $\mathcal{T}_2$ is either open or in $L$) and take $\mu_{\mathcal{T}_2,p'}$ to be the random-cluster model on $\mathcal{T}_2$ with parameter $p'$ instead of $p.$  Then we compare the probabilities in $\mu_{\mathcal{T}_1}''$ and in $\mu_{\mathcal{T}_2,p'}$ that a plaquette is open in the cases that its state does or does not affect $\mathbf{b}_{i-1}.$ In former case, the conditional probabilities are $\frac{p}{q} +(1-p)\frac{p}{q}p_S^{2i}$ and $\frac{p'}{q}$ respectively, and in the latter both are $p'.$ Thus, for any configuration $\xi$ on $\mathcal{T}_1$ and any $\sigma \in F_2,$ 
\begin{align*}
&\mu_{\mathcal{T}_1}''\paren{\omega\paren{\sigma} = 1\mid \omega\paren{\tau} = \xi\paren{\tau} \text{ for all } \tau \in \mathcal{T}_1 \setminus \sigma} \\
&\quad\geq \mu_{\mathcal{T}_1,p'}\paren{\omega\paren{\sigma} = 1 \mid \omega\paren{\tau} = \xi\paren{\tau} \text{ for all } \tau \in \mathcal{T}_1 \setminus \sigma}.
\end{align*}

Therefore we have 
\[\mu_{\mathcal{T}_1}'' \geq_{\mathrm{st}} \mu_{\mathcal{T}_2,p'}\]
by Theorem~\ref{thm:stochdom}, and so 
\[\mu_{\mathcal{T}_0} \geq_{G} \mu_{\mathcal{T}_2,p'}\,,\]
Now, as in~\cite{duncan2020homological}, we may take $p$ so that $p < p_u\paren{q,i,d-1} < p'.$ Then for each $N$ we have 
\[\mu_{\T^d_N,p,q,i}\paren{A} \geq \mu_{\T^{d-1}_N,p',q,i}\paren{A}\,,\]
so
\[\liminf \mu_{\T^d_N,p,q,i}\paren{A} \geq \liminf \mu_{\T^{d-1}_N,p',q,i}\paren{A} \geq 1/2\,,\]
and thus 
\[p_u\paren{q,i,d} \leq p < p_u\paren{q,i,d-1}\,.\]

Combining this with Proposition~\ref{prop:duality} also gives $p_u\paren{q,i,d-1} < p_u\paren{q,i+1,d}.$
\end{proof}

\appendix

\section{General Boundary Conditions}\label{app:boundaryconditions}
In this section we generalize the notion of boundary conditions. Let $\Omega$ be the space of configurations of $i$-plaquettes in $\Z^d.$ For a subset of vertices $V \subset \Z^d,$ let $F^k_V$ be the set of $k$-plaquettes with all vertices contained in $V.$ Then given $\xi \in \Omega,$ let 
\[\Omega_{\Lambda_n}^{\xi} = \set{\omega \in \Omega : \omega\paren{\sigma} = \xi\paren{\sigma} \text{ for all } \sigma \in F^k_{\Z^d} \setminus F^k_{\Lambda_{n-1}}}\,.\]

Intuitively, the boundary condition should describe how the states of external plaquettes affect the random-cluster measure within $\Lambda_n.$ In the classical model, this is done by keeping track of which vertices of $\partial \Lambda_n$ are connected externally. In higher dimensions we will need slightly more information, but the idea is the same. Let $P_{\xi,V}$ be the complex consisting of the union of the $(i-1)$-skeleton of $\Z^d$ and the open plaquettes of $\xi$ contained in $F^i_{V}.$ Let $D_n^{i-1}$ be the $(i-1)$-skeleton of $\partial \Lambda_n.$ Then we construct a cubical complex $Q_{\omega,\xi}$ (not necessarily a subcomplex of $\Z^d$) by taking $P_{\omega,\Lambda_n}$ and attaching a cubical complex $A_{\xi}$ so that
\begin{itemize}
    \item $A_{\xi} \cap P_{\omega,\Lambda_n} \subset D_n^{i-1}.$
    \item The map $\varphi_A : H_{i-1}\paren{D_n^{i-1};\,\F} \to  H_{i-1}\paren{A_{\xi};\,\F}$ induced by the inclusion $D_n^{i-1} \hookrightarrow A_{\xi}$ is surjective.
    \item The kernel of $\varphi_A$ is the same as the kernel of the map $H_{i-1}\paren{D_n^{i-1};\,\F} \to  \varphi_P : H_{i-1}\paren{P_{\xi,\Z^d\setminus \Lambda_{n-1}};\,\F}$ induced by the inclusion $D_n^{i-1} \hookrightarrow P_{\xi,\Z^d\setminus \Lambda_{n-1}}.$
\end{itemize}

Such an $A_{\xi}$ can be constructed by taking $P_{\xi}$ and filling the $(i-1)$-cycles that are not homologous to cycles in $D_n^{i-1}.$

Now we can define the plaquette random-cluster model on $\Lambda_n$ with boundary condition $\xi$ as follows:
\[\mu^{\xi}_{\Lambda_n}\paren{\omega} = \begin{cases}
\frac{1}{Z^{\xi}_{\Lambda_n}}\brac{\prod_{\sigma \in F^i_{\Lambda_n}} p^{\omega\paren{\sigma}}\paren{1-p}^{1-\omega\paren{\sigma}}}q^{\mathbf{b}_{i-1}\paren{Q_{\omega,\xi};\,\F}} & \omega \in \Omega_{\Lambda_n}^{\xi}\\
0 & \text{otherwise.}
\end{cases}
\]

In particular, the free boundary measure $\mu_{\Lambda_n}^{\mathbf{f}}$ and the wired boundary measure $\mu_{\Lambda_n}^{\mathbf{w}}$ are obtained by taking $\xi$ to be the all closed and all open configurations respectively.

\begin{Lemma}\label{lemma:positiveassociation}
Let $p \in \brac{0,1}, q \geq 1,$ and $n \in \N.$ Then for every $\xi \in \Omega,$ $\mu^{\xi}_{\Lambda_n}$ is positively associated.
\end{Lemma}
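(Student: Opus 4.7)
The plan is to reduce this to the argument already used in Theorem~\ref{thm:i-FKG} by showing that the map $\omega \mapsto Q_{\omega,\xi}$ from configurations on $\Lambda_n$ to cubical complexes is a lattice homomorphism, so that the Mayer--Vietoris--based Betti number inequality carries over verbatim. When $p\in(0,1)$ the measure $\mu^\xi_{\Lambda_n}$ is strictly positive on $\Omega^\xi_{\Lambda_n}$, and this is clearly a sublattice of $\Omega$ under pointwise meet and join because the restriction to $F^k_{\Z^d}\setminus F^k_{\Lambda_{n-1}}$ is fixed equal to $\xi$. Hence by Holley's criterion it suffices to verify the FKG lattice condition
\[\mu^\xi_{\Lambda_n}\paren{\omega\vee\omega'}\,\mu^\xi_{\Lambda_n}\paren{\omega\wedge\omega'}\ \geq\ \mu^\xi_{\Lambda_n}\paren{\omega}\,\mu^\xi_{\Lambda_n}\paren{\omega'}\]
for every $\omega,\omega'\in\Omega^\xi_{\Lambda_n}$.

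The first step is to note that the open $i$-plaquette count is modular: $\eta\paren{\omega\vee\omega'}+\eta\paren{\omega\wedge\omega'}=\eta\paren{\omega}+\eta\paren{\omega'}$, so the $p,(1-p)$ factors cancel. All that remains is the Betti number term. Here the key structural observation is that, because $A_\xi$ is a fixed complex that meets $P_{\omega,\Lambda_n}$ only inside the $(i-1)$-skeleton $D_n^{i-1}$ (which is common to every $P_{\omega,\Lambda_n}$), the $i$-cells of $Q_{\omega,\xi}=P_{\omega,\Lambda_n}\cup A_\xi$ that live in $F^i_{\Lambda_n}$ are exactly the open plaquettes of $\omega$. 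Consequently
\[Q_{\omega\vee\omega',\xi}=Q_{\omega,\xi}\cup Q_{\omega',\xi},\qquad Q_{\omega\wedge\omega',\xi}=Q_{\omega,\xi}\cap Q_{\omega',\xi},\]
as cubical complexes. The second identity uses that the intersection of the two $i$-cell sets inside $F^i_{\Lambda_n}$ corresponds to $\omega\wedge\omega'$, while both copies of $A_\xi$ coincide and are kept.

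The third step is to apply Equation~\ref{eq:mayervietorisbetti} (which holds for arbitrary cubical complexes, and in particular for $Q_{\omega,\xi}$ and $Q_{\omega',\xi}$) in dimension $k=i-1$:
\[\mathbf{b}_{i-1}\paren{Q_{\omega,\xi}\cap Q_{\omega',\xi}}+\mathbf{b}_{i-1}\paren{Q_{\omega,\xi}\cup Q_{\omega',\xi}}\geq \mathbf{b}_{i-1}\paren{Q_{\omega,\xi}}+\mathbf{b}_{i-1}\paren{Q_{\omega',\xi}}.\]
Combined with the modularity of $\eta$, this yields the FKG lattice condition, and hence positive association on $\Omega^\xi_{\Lambda_n}$ by Holley's theorem. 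The degenerate cases $p\in\{0,1\}$ follow because then $\mu^\xi_{\Lambda_n}$ is supported on a single configuration.

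The main obstacle is verifying the lattice homomorphism step: one has to be careful that the auxiliary complex $A_\xi$ really is $\omega$-independent and interacts with $P_{\omega,\Lambda_n}$ only through the skeleton $D_n^{i-1}$, so that taking unions and intersections of configurations inside $\Lambda_n$ passes cleanly to unions and intersections of the attached complexes. Once this is in place, everything else reduces to the Hiraoka--Shirai proof reproduced in Theorem~\ref{thm:i-FKG}.
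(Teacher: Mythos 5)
Your proof is correct, but it takes a different route from the paper's. The paper realizes $\mu^\xi_{\Lambda_n}$ as $\mu_{\Lambda_n\cup A_\xi}$ conditioned on the plaquettes of $A_\xi$ being open; since the plaquette random-cluster model on the fixed complex $\Lambda_n\cup A_\xi$ already satisfies the FKG lattice condition by Theorem~\ref{thm:i-FKG} (and hence is strongly positively associated in the sense of Theorem~3.8 of Grimmett), conditioning on a cylinder event preserves positive association, and the lemma follows in two sentences. You instead verify the FKG lattice condition for $\mu^\xi_{\Lambda_n}$ directly by showing that $\omega\mapsto Q_{\omega,\xi}$ is a lattice homomorphism and then invoking the Mayer--Vietoris Betti-number inequality, Equation~\ref{eq:mayervietorisbetti}. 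The lattice-homomorphism observation is correct: $A_\xi$ is fixed by $\xi$, meets each $P_{\omega,\Lambda_n}$ only inside the $\omega$-independent boundary skeleton $D_n^{i-1}$, and so contributes the same set of $i$-cells to every $Q_{\omega,\xi}$, from which the union and intersection identities follow; $Q_{\omega,\xi}$ and $Q_{\omega',\xi}$ are then subcomplexes of $Q_{\omega\vee\omega',\xi}$ and Mayer--Vietoris applies. The two arguments have essentially the same content once one unwinds the paper's appeal to Theorem~\ref{thm:i-FKG}, but the paper's version is shorter and more modular, outsourcing the topology to the already-proved FKG theorem, whereas your version is more self-contained and makes explicit the one geometric fact --- that the boundary complex is attached along an $\omega$-independent skeleton --- that makes the conditioning argument legitimate.
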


\begin{proof}
The proof is analogous to the proof of Theorem 4.14 of ~\cite{grimmett2006random}. Consider the plaquette random-cluster model $\mu_{\Lambda_n\cup A_{\xi}}.$ This satisfies the FKG lattice condition and is thus strongly postively associated. Then since $\mu^{\xi}_{\Lambda_n}$ is $\mu_{\Lambda_n\cup A_{\xi}}$ conditioned on the plaquettes of $A_{\xi}$ being open, it follows that $\mu^{\xi}_{\Lambda_n}$ is positively associated.
\end{proof}

\section*{Acknowledgments}
We would like to thank Sky Cao, Matthew Kahle, and David Sivakoff for interesting and useful discussions.

\bibliographystyle{alpha}
\bibliography{bibliography}

\end{document}